\numberwithin{equation}{section}
  \newtheorem{theorem}{Theorem}[section]
  \newtheorem{lemma}[theorem]{Lemma}
  \newtheorem{proposition}[theorem]{Proposition}
  \newtheorem{corollary}[theorem]{Corollary}
  \newtheorem{definition}[theorem]{Definition}
  \newtheorem{remark}[theorem]{Remark}
\newcommand{\bfw}{\mathbf{w}}
\newcommand{\bfu}{\mathbf{u}}
\newcommand{\bfv}{\mathbf{v}}
\newcommand{\bfq}{\mathbf{q}}
\newcommand{\bfg}{\mathbf{g}}
\newcommand{\bfh}{\mathbf{h}}
\newcommand{\bfr}{\mathbf{r}}
\newcommand{\bfF}{\mathbf{F}}
\newcommand{\bfphi}{\boldsymbol{\varphi}}
\newcommand{\bfvarphi}{\boldsymbol{\varphi}}
\newcommand{\diver}{\divergence}
\newcommand{\R}{\mathbb R}
\newcommand{\N}{\mathbb N}
\newcommand{\E}{\mathbb E}
\newcommand{\p}{\mathbb P}
\newcommand{\dd}{\mathrm{d}}
\newcommand{\dx}{\,\mathrm{d}x}
\newcommand{\dt}{\,\mathrm{d}t}
\newcommand{\dxt}{\,\mathrm{d}x\,\mathrm{d}t}
\newcommand{\dxs}{\,\mathrm{d}x\,\mathrm{d}\sigma}
\newcommand{\dif}{\mathrm{d}}
\newcommand{\mf}{\mathscr{F}}
\newcommand{\mr}{\mathbb{R}}
\newcommand{\prst}{\mathbb{P}}
\newcommand{\stred}{\mathbb{E}}
\newcommand{\ind}{\mathbf{1}}
\newcommand{\mn}{\mathbb{N}}
\newcommand{\mt}{\mathbb{T}^N}
\DeclareMathOperator{\supp}{supp}
\DeclareMathOperator{\divergence}{div}
\newcommand{\distr}{\overset{d}{\sim}}
\newcommand{\tec}{{\overset{\cdot}{}}}
\newcommand{\bu}{\mathbf u}
\newcommand{\bq}{\mathbf q}
\newcommand{\rmk}[1]{\textcolor{red}{#1}}
\newcommand{\vr}{\varrho}
\newcommand{\vu}{\vc{u}}
\newcommand{\vc}[1]{{\bf #1}}
\newcommand{\intTor}[1]{\int_{\tor} #1 \ \dx}
\newcommand{\tor}{\mathbb{T}^N}
\begin{document}

%\begin{frontmatter}

\title{Incompressible limit for compressible fluids with stochastic forcing}

\author{Dominic Breit}
\address[D. Breit]{Department of Mathematics, Heriot-Watt University, Riccarton Edinburgh EH14 4AS, UK}
\email{d.breit@hw.ac.uk}

\author{Eduard Feireisl}
\address[E. Feireisl]{Institute of Mathematics AS CR
\v{Z}itn\'a 25
CZ - 115 67 Praha 1
Czech Republic}
\email{feireisl@math.cas.cz}

\author{Martina Hofmanov\'a}
\address[M. Hofmanov\'a]{Technical University Berlin, Institute of Mathematics, Stra\ss e des 17. Juni 136, 10623 Berlin, Germany}
\email{hofmanov@math.tu-berlin.de}

\begin{abstract}

We study the asymptotic behavior of the isentropic Navier-Stokes system driven by a multiplicative stochastic forcing
in the compressible regime, where the Mach number approaches zero. Our approach is based on the recently developed concept
of weak martingale solution to the primitive system, uniform bounds derived from a stochastic analogue of the modulated energy inequality, and careful analysis of acoustic waves. A stochastic incompressible Navier-Stokes system is identified as the limit problem.

\end{abstract}

\subjclass[2010]{60H15, 35R60, 76N10,  35Q30}
\keywords{Compressible fluids, stochastic Navier-Stokes equations, incompressible limit, weak solution, martingale solution}

\date{\today}

\maketitle

%\end{frontmatter}

\section{Introduction}

Singular limit processes bridge the gap between fluid motion considered in different geometries, times scales, and/or under different constitutive relations as the case may be. In their pioneering paper, Klainerman and Majda \cite{KlMa} proposed a general approach to these problems in the context of hyperbolic
conservation laws, in particular, they examine the passage from compressible to incompressible fluid flow motion via the low Mach number limit. As the problems are typically non-linear, the method applies in general only on short time intervals on which regular solutions are known to exist.
A qualitatively new way, at least in the framework of \mbox{viscous} fluids, has been open by the mathematical theory of weak solutions
developed by P.-L. Lions \cite{Li2}. In a series of papers, Lions and Masmoudi \cite{LiMa}, \cite{LiMa2} (see also Desjardins, Grenier \cite{DeGr},
Desjardins et al. \cite{DeGrLiMa}) studied various singular limits for the \emph{barotropic} Navier-Stokes system, among which the incompressible (low Mach number)
limit. The incompressible limit is characterized with a large speed of the acoustic waves becoming infinite in the asymptotic regime. Accordingly,
the fluid density approaches a constant and the velocity solenoidal. The limit behavior is described by the standard \emph{incompressible} Navier
-Stokes system.

In the present paper, we study the \emph{compressible--incompressible} scenario in the context of stochastically driven fluids. Specifically,
we consider the Navier--Stokes system for an isentropic compressible viscous fluid driven by a multiplicative stochastic forcing and study the asymptotic behavior of solutions in the low Mach number regime. To avoid the well known difficulties
due to the presence of a boundary layer in the case of no-slip boundary conditiones (cf. Desjardins et al. \cite{DeGrLiMa}), we restrict
ourselves to the motion in the ``flat'' $N$-dimensional torus $\mt=\left( [0,2\pi]|_{ \{ 0 , 2\pi\} } \right)^N$, $N=2,3$ and on a finite
time interval $(0,T)$; we set $Q=(0,T)\times\mt$. We study the limit as $\varepsilon\rightarrow0$ in the following system which governs the time evolution of the density $\varrho$ and the velocity $\bu$ of a compressible viscous fluid:
\begin{subequations}\label{eq:}
%\begin{equation}
 \begin{align}
  \dif \varrho+\divergence(\varrho\bu)\dif t&=0,\label{eq1}\\%&\qquad\text{ in } &Q,\label{eq1}\\
  \dif(\varrho\bu)+\big[\divergence(\varrho\bu\otimes\bu)-\nu\Delta\bu-(\lambda+\nu)\nabla\divergence\bfu+\frac{1}{\varepsilon^2}\nabla p(\varrho)\big]\dif t&=\varPhi(\varrho,\varrho\bu) \,\dif W.\label{eq2}%&\qquad\text{ in } &Q,\label{eq2}\\
 % \varrho(0)=\varrho_0,\qquad (\varrho\bu)(0)&=\bq_0\label{eq3}%&\qquad\text{ in }&\mt.\label{eq3}
 \end{align}
%\end{equation}
\end{subequations}
Here $p(\varrho)$ is the pressure which is supposed to follow the $\gamma$-law, i.e. $p(\varrho)=\varrho^\gamma$ where $\gamma>N/2$; the viscosity coefficients $\nu,\,\lambda$ satisfy
$$\nu>0,\quad\lambda+\frac{2}{3}\nu\geq0.$$
The driving process $W$ is a cylindrical Wiener process defined on some probability space $(\Omega,\mf,\prst)$ and the coefficient $\varPhi$ is a linear function of momentum $\varrho\bfu$ and a generally nonlinear function of density $\varrho$ satisfying suitable growth conditions. The precise description of the problem setting will be given in the next section.\\
The parameter $\varepsilon$ in \eqref{eq2} is proportional to the \emph{Mach number} (the ratio of the characteristic flow velocity and
the speed of sound). From a physical point of view the fluid should behave (asymptotically) like an incompressible one if the density is close to a constant, the velocity is small and we look at large time scales. A suitable scaling of the Navier-Stokes system results in \eqref{eq2} with a small parameter $\varepsilon$, see Klein et al.
\cite{KBSMRMHS}. 
In the limit of (\ref{eq1}--\ref{eq2}) we recover the stochastic Navier--Stokes system for incompressible fluids, that is,
\begin{subequations}\label{eq:lim}
%\begin{equation}
 \begin{align}
  \dif\bu+\big[\divergence(\bu\otimes\bu)-\nu\Delta\bu+\nabla \pi\big]\dif t&=\varPsi(\bu) \,\dif W,\label{eq2lim}\\
    \divergence(\bu)&=0,\label{eq1lim}
 % \varrho(0)=\varrho_0,\qquad (\varrho\bu)(0)&=\bq_0\label{eq3}%&\qquad\text{ in }&\mt.\label{eq3}
 \end{align}
%\end{equation}
\end{subequations}
where $\pi$ denotes the associated pressure and $\varPsi(\bfu)= \mathcal{P}_H \varPhi (1,\bfu)$, with
$\mathcal P_H$ being the Helmholtz projection onto the space of solenoidal vector fields.
To be more precise, we show that for a given initial law $\Lambda$ for \eqref{eq:} and the \emph{ill--prepared} initial data for the compressible Navier--Stokes system \eqref{eq:}, the approximate densities converge to a constant whereas the velocities converge \emph{in law} to a weak martingale solution to the incompressible Navie-r-Stokes system \eqref{eq:lim} with the initial law $\Lambda$. This result is then strengthened in dimension two where we are able to prove the almost sure convergence of the velocities.

Our approach is based on the concept of \emph{finite energy weak martingale solution} to the compressible Navier--Stokes system (\ref{eq:}), whose existence was established recently in
\cite{BrHo} and extends the approach in \cite{feireisl1} to the stochastic setting, see Section \ref{sec:framework} for more details. Similarly to its deterministic counterpart, the low Mach number limit problem features two
essential difficulties:
\begin{itemize}
\item
finding suitable uniform bounds independent of the scaling parameter $\epsilon$;
\item
analysis of rapidly oscillating \emph{acoustic waves}, at least in the case of ill-prepared data.
\end{itemize}
Here, the necessary uniform bounds follow directly from the associated stochastic analogue of the energy inequality exploiting the basic
properties of It\^o's integral, see Section \ref{UB}.
The propagation of acoustic waves is described by a stochastic variant of Lighthill's acoustic analogy: A linear wave equation driven by a stochastic forcing, see Section \ref{AW}. The desired estimates are obtained via the deterministic approach, specifically the
 so-called local method proposed by Lions and Masmoudi
\cite{LiMa,LiMa2}, adapted to the stochastic setting.

A significant difference in comparison to the deterministic situation is the corresponding compactness argument. In general it is not possible to get any compactness in $\omega$ as no topological structure on
the sample space $\Omega$ is assumed. To overcome this difficulty, it is classical to rather concentrate
on compactness of the set of laws of the approximations and apply the Skorokhod representation
theorem. It gives existence of a new probability space with a sequence of random variables
that have the same laws as the original ones and that in addition converge almost surely.
However, the Skorokhod representation Theorem is restricted to
metric spaces but the structure of the compressible Navier--Stokes equations naturally leads
to weakly converging sequences. On account of this we work with the Jakubowski--Skorokhod
Theorem which is valid on a large class of topological spaces (including separable Banach
spaces with weak topology). In the two-dimensional case we gain a stronger convergence result (see Theorem \ref{thm:main2d}). This is based on the uniqueness for the system \eqref{eq:lim} and a new version of the Gy\"{o}ngy--Krylov characterization of convergence in probability \cite{krylov} which applies to the setting of quasi-Polish spaces (see Proposition \ref{diagonal}).

We point out that the gradient part of the velocity converges only weakly to zero due to the presence of the acoustic waves, and, consequently,
the limit in the stochastic forcing $\Phi(\varrho, \varrho \bu) {\rm d}W$ can be performed only if $\Phi$ is \emph{linear} with respect to $\varrho \bu$. However, this setting already covers the particular case of
\begin{align*}
\Phi(\varrho,\varrho\bfu)\,\dd W= \varrho \,\Phi_1\,\dd W^1+\varrho\bfu\,\Phi_2\,\dd W^2
\end{align*}
with two independent cylindrical Wiener processes $W^1$ and $W^2$ and suitable Hilbert--Schmidt operators $\Phi_1$ and $\Phi_2$, which is the main example we have in mind. Here the first term describes some external force whereas the second one
may be interpreted as a friction force of Brinkman's type, see e.g. Angot et al. \cite{AnBrFa}.

In the case of $\Phi(\varrho,\varrho\bfu)=\varrho\,\Phi_1$, a semi-deterministic approach towards existence for \eqref{eq:} was developed in \cite{feireisl2} (see also \cite{To} for the two-dimensional case). More precisely, this particular case of multiplicative noise
permits reduction of the problem that can be solved pathwise using deterministic arguments only. Nevertheless, it seems that such a pathwise approach is not convenient for the incompressible limit. In particular, uncontrolled quantities appear in the basic energy estimate and therefore the uniform bounds with respect to the parameter $\varepsilon$ are lost. On the contrary, the stochastic method of the present paper heavily depends on the martingale properties of the It\^o's stochastic integral which gives sufficient control of the expected values of all the necessary quantities.

We point out that a noise depending on the velocity $u$ in a \emph{non-linear} way cannot be unfortunately handled by the present method. This is due to only weak convergence of the velocity due to the oscillations generated by acoustic waves - a problem occurring already at the deterministic level, cf. Lions and Masmoudi \cite{LiMa}

The exposition is organized as follows. In Section \ref{sec:framework} we continue with the introductory part: we introduce the basic set-up, the concept of solution and state the main results in Theorem \ref{thm:main} and Theorem \ref{thm:main2d}. The remainder of the paper is then devoted to its proof.

\section{Mathematical framework and the main result}
\label{sec:framework}

Throughout the whole text, the symbols $W^{l,p}$ will denote the Sobolov space of functions having distributional derivatives up to order $l$ integrable in $L^p$. We will also use $W^{l,2}(\mt)$ for $l \in \R$ to denote the space of distributions $v$ defined on $\mt$ with the finite norm
\begin{equation}\label{trigo}
\sum_{k \in \mathbb{Z}} k^{2l} |c_k(v)|^2 < \infty,
\end{equation}
where $c_k$ denote the Fourier coefficients with respect to the standard trigonometric basis $\{ \exp(ikx) \}_{k \in\mathbb{Z}}$.

To begin with, let us set up the precise conditions on the random perturbation of the system \eqref{eq:}. Let $(\Omega,\mf,(\mf_t)_{t\geq0},\prst)$ be a stochastic basis with a complete, right-continuous filtration. The process $W$ is a cylindrical Wiener process, that is, $W(t)=\sum_{k\geq1}\beta_k(t) e_k$ with $(\beta_k)_{k\geq1}$ being mutually independent real-valued standard Wiener processes relative to $(\mf_t)_{t\geq0}$ and $(e_k)_{k\geq1}$ a complete orthonormal system in a sepa\-rable Hilbert space $\mathfrak{U}$.
To give the precise definition of the diffusion coefficient $\varPhi$, consider $\rho\in L^\gamma(\mt)$, $\rho\geq0$, and $\bfv\in L^2(\mt)$ such that $\sqrt\rho\bfv\in L^2(\mt)$. Denote $\bfq=\rho\bfv$ and let $\,\varPhi(\rho,\bq):\mathfrak{U}\rightarrow L^1(\mt)$ be defined as follows%\marginpar{\textcolor{red}{$\bfH_k$ has to be a constant??}\rmk{because we only have convergence of $\mathcal P (\varrho_\varepsilon\bfu_\varepsilon)$ and not the gradient part.}}
$$\varPhi(\rho,\bq)e_k=\bfg_k(\cdot,\rho(\cdot),\bq(\cdot))=\bfh_k(\cdot,\rho(\cdot))+\alpha_k  \bq(\cdot) ,$$
where the coefficients $\alpha_k\in\mr$ are constants and $\bfh_{k}:\mt\times\mr \to \mr $ are $C^1$-functions that satisfy
\begin{align}
\sum_{k \geq 1} | \alpha_k |^2& < \infty,\label{growth1-}\\
\sum_{k\geq 1}|\bfh_{k}(x,\rho)|^2&\leq C(\rho^2+|\rho|^{\gamma+1}),\label{growth1}\\
\quad\sum_{k\geq 1}|\nabla_{\rho} \bfh_{k}(x,\rho)|^2&\leq C(1+|\rho|^{\gamma-1}).\label{growth2}
%\sum_{k\geq 1}\| H_{2,k}\|_{L_{x,\rho}^\infty}^2&\leq C,\label{growth3}\\
%\sum_{k\geq 1}\| \partial_\rho H_{2,k}\|_{L_{x,\rho}^\infty}^2&\leq C.\label{growth4}
\end{align}
Remark that in this setting $L^1(\mt)$ is the natural space for values of the operator $\varPhi(\rho,\rho\bfv)$. Indeed, due to lack of a priori estimates for \eqref{eq:} it is not possible to consider $\varPhi(\rho,\rho\bfv) $ as a mapping with values in a space with higher integrability. This fact brings difficulties concerning the definition of the stochastic integral in \eqref{eq:} because the space $L^1(\mt)$ does not belong among 2-smooth Banach spaces nor among UMD Banach spaces where the theory of stochastic It\^o integration is well-established (see e.g. \cite{b2}, \cite{ondrejat3}, \cite{veraar}). However, since we expect the momentum equation \eqref{eq2} to be satisfied only in the sense of distributions anyway, we make use of the embedding $L^1(\mt)\hookrightarrow W^{-l,2}(\mt)$, which is true provided $l>\frac{N}{2}$, and understand the stochastic integral as a process in the Hilbert space $W^{-l,2}(\mt)$. To be more precise, it is easy to check that under the above assumptions on $\rho$ and $\bfv$, the mapping $\varPhi(\rho,\rho\bfv)$ belongs to $L_2(\mathfrak{U};W^{-l,2}(\mt))$, the space of Hilbert-Schmidt operators from $\mathfrak{U}$ to $W^{-l,2}(\mt)$. Indeed, due to \eqref{growth1-} and \eqref{growth1}
\begin{align}\label{stochest}
\begin{aligned}
\big\|\varPhi(\rho,\rho\bfv)\big\|^2_{L_2(\mathfrak{U};W^{-l,2}_x)}&=\sum_{k\geq1}\|\bfg_k(\rho,\rho\bfv)\|_{W^{-l,2}_x}^2\leq C\sum_{k\geq1}\|\bfg_k(\rho,\rho\bfv)\|_{L^1_x}^2\\
&\leq \sum_{k \geq 1} \left( \int_{\mt} \big(|\bfh_k (x, \rho)| +\rho |\alpha_k  \bfv| \big) \,\dif x \right)^2 \\
%&=C\sum_{k\geq1}\bigg(\int_{\mt}\big|\rho\, g_{k}(x,\rho)+H_{2,k}(x,%\rho)\rho\bfv\big|\,\dif x\bigg)^2\\
&\leq C(\rho)_{\mt}\int_{\mt}\bigg(\sum_{k\geq1}\rho^{-1}|\bfh_{k}(x,\rho)|^2+\sum_{k\geq 1}\rho|\alpha_k\bfv|^2  \bigg)\dif x \\
&\leq C(\rho)_{\mt}\int_{\mt}\big(\rho+\rho^\gamma+\rho|\bfv|^2\big)\,\dif x<\infty,
\end{aligned}
\end{align}
where $(\rho)_{\mt}$ denotes the mean value of $\rho$ over $\mt$.
Consequently, if
\begin{align*}
\rho&\in L^\gamma(\Omega\times(0,T),\mathcal{P},\dif\prst\otimes\dif t;L^\gamma(\mt)),\\
\sqrt\rho\bfv&\in L^2(\Omega\times(0,T),\mathcal{P},\dif\prst\otimes\dif t;L^2(\mt)),
\end{align*}
where $\mathcal{P}$ denotes the progressively measurable $\sigma$-algebra associated to $(\mf_t)$, and the mean value $(\rho(t))_{\mt}$ (that is constant in $t$ but in general depends on $\omega$) is for instance essentially bounded
%$$\big(\rho(\omega,t)\big)_{\mt}=\int_{\mt}\rho(\omega,t,x)\,\dif x\leq M\in(0,\infty),$$
then the stochastic integral $\int_0^\tec\varPhi(\rho,\rho\bfv)\,\dif W$ is a well-defined $(\mf_t)$-martingale taking values in $W^{-l,2}(\mt)$.
Finally, we define the auxiliary space $\mathfrak{U}_0\supset\mathfrak{U}$ via
$$\mathfrak{U}_0=\bigg\{v=\sum_{k\geq1}c_k e_k;\;\sum_{k\geq1}\frac{c_k^2}{k^2}<\infty\bigg\},$$
endowed with the norm
$$\|v\|^2_{\mathfrak{U}_0}=\sum_{k\geq1}\frac{c_k^2}{k^2},\qquad v=\sum_{k\geq1}c_k e_k.$$
Note that the embedding $\mathfrak{U}\hookrightarrow\mathfrak{U}_0$ is Hilbert-Schmidt. Moreover, trajectories of $W$ are $\prst$-a.s. in $C([0,T];\mathfrak{U}_0)$ (see \cite{daprato}).

\subsection{The concept of solution and the main result}
\label{subsec:solution}

Existence of the so-called finite energy weak martingale solution to the stochastic Navier-Stokes system for compressible fluids, in particular \eqref{eq:}, was recently established in \cite{BrHo}. Let us recall the corresponding definition of a solution and the existence result.
\begin{definition}
\label{def:sol} A quantity
\[
\left[ (\Omega,\mf,(\mf_t),\prst); \vr, \vu, W \right]
\]
is called a weak martingale solution to problem (\ref{eq1}--\ref{eq2}) with the initial law $\Lambda$ provided:
\begin{enumerate}
\item
$(\Omega,\mf,(\mf_t),\prst)$ is a stochastic basis with a complete right-continuous filtration;
\item $W$ is an $(\mf_t)$-cylindrical Wiener process;
\item the density $\vr$ satisfies $\vr \geq 0$, $t \mapsto \left< \vr(t, \cdot), \psi \right> \in C[0,T]$ for any
$\psi \in C^\infty(\tor)$
$\mathbb{P}-$a.s., the function $t \mapsto \left< \vr(t, \cdot), \psi \right>$
is progressively measurable,
and
\[
\E\bigg[\sup_{t \in [0,T]} \| \vr(t,\cdot) \|^p_{L^\gamma(\tor)} \bigg] < \infty \ \mbox{for all}\ 1 \leq p < \infty;
\]
\item the velocity field $\vu$ is adapted, $\vu \in L^2(\Omega \times (0,T); W^{1,2}(\tor))$,
\[
\E\bigg[\bigg( \int_0^T \| \vu \|^2_{W^{1,2}(\tor)} \ \dt \bigg)^p \bigg] < \infty\ \mbox{for all}\ 1 \leq p < \infty;
\]
\item the momentum $\vr \vu$ satisfies $t \mapsto \left< \vr \vu, \phi \right> \in C[0,T]$ for any $\phi \in C^\infty(\tor)$
$\mathbb{P}-$a.s., the function $t \mapsto \left< \vr \vu, \phi \right>$ is progressively measurable,
\[
\E \sup_{t \in [0,T]} \left\| \vr \vu \right\|^p_{L^{\frac{2 \gamma}{\gamma + 1}}}  < \infty\ \mbox{for all}\  1 \leq p < \infty;
\]
\item $\Lambda=\mathbb{P}\circ \left( \vr(0), \vr \vu (0) \right)^{-1} $,
\item for all $\psi\in C^\infty(\mt)$ and
 $\bfvarphi\in C^\infty(\mt)$ and all $t\in[0,T]$ it holds $\prst$-a.s.
\begin{align*}
\big\langle\varrho(t),\psi\big\rangle&=\big\langle\varrho(0),\psi\big\rangle+\int_0^t\big\langle\varrho\bfu,\nabla\psi\big\rangle\,\dif s,\\
\big\langle\varrho\bfu(t),\bfvarphi\big\rangle&=\big\langle\varrho \bfu(0),\bfvarphi\big\rangle+\int_0^t\big\langle\varrho\bfu\otimes\bfu,\nabla\bfvarphi\big\rangle\,\dif s-\nu\int_0^t\big\langle\nabla\bfu,\nabla\bfvarphi\big\rangle\,\dif s\\
&\quad-(\lambda+\nu)\int_0^t\big\langle\divergence\bfu,\divergence\bfvarphi\big\rangle\,\dif s+\frac{1}{\varepsilon^2}\int_0^t\big\langle\rho^\gamma,\divergence\bfvarphi\big\rangle\,\dif s\\
&\quad+\int_0^t\big\langle\varPhi(\varrho,\varrho\bfu)\,\dif W,\bfvarphi\big\rangle,
\end{align*}
%\item for all test functions $\psi \in C^\infty(\tor)$, $\phi \in C^\infty(\tor)$ and all $t \in [0,T]$ it holds $\mathbb{P}$-a.s.:
%\begin{align}
%\nonumber
%\dif\left< \vr, \psi \right> &= \left< \vr \vu , \Grad \psi \right> \dt,
%\\
%\nonumber \dif\left< \vr \vu, \phi \right>  &= \Big[ \left< \vr \vu \otimes \vu , \Grad \phi \right>  - \left< \tn{S}(\Grad \vu), \Grad \phi \right>
%+ \left< p(\vr), \Div \phi \right> \Big]\dt
%+ \left< \varPhi (\vr, \vr \vu) , \phi \right> \dif W;
%\end{align}
%\item for all $p\in[1,\infty)$ the following energy inequality holds true
%\begin{equation*}\label{energy}
%\begin{split}
%&\stred\bigg[\sup_{0\leq t\leq T}\int_ {\mt} \Big(\frac{1}{2} \varrho(t)\big| \bfu(t)\big|^2+\frac{1}{\varepsilon^2(\gamma-1)}\varrho^\gamma (t)\Big)\dif x\bigg]^p\\
%&\quad+\stred\bigg[\int_0^{T}\int_ {\mt}\Big(\nu |\nabla \bfu |^2+(\lambda+\nu)|\diver\bfu|^2\Big)\dif x\,\dif s\bigg]^p\\
%& \leq \,C(p)\,\stred\bigg[\int_{\mt} \Big(\frac{1}{2} \frac{ |\varrho \bfu(0)|^2 }{\varrho(0)} +\frac{a}{(\gamma-1)} \varrho(0)^\gamma\Big)\dif x+1\bigg]^p.
%\end{split}
%\end{equation*}
\end{enumerate}
\end{definition}

The present problem requires a refined concept of \emph{finite energy weak solution} similar to that introduced in \cite{BFH}.
The relevant existence result is proved in \cite{BrHo}:
\begin{theorem} \label{thm:exist}
%Let the pressure $p$ be as in (\ref{press}) and let $\vc{G}_k$ be continuously differentiable satisfying (\ref{FG1}), (\ref{FG2}).
%Let the initial law $\Lambda$ be given on the space $L^\gamma (\tor) \times L^{\frac{2 \gamma}{\gamma + 1}}(\tor;R^3)$ and
%\begin{eqnarray}
%\nonumber
%\Lambda \Big\{ (\vr, \vc{q} ) \in L^\gamma (\tor) & \times & L^{\frac{2 \gamma}{\gamma + 1}}(\tor;R^3),\ \vr \geq 0,  \\
%\nonumber  0 < M_1 \leq \intTor{ \vr } \leq M_2,\ \vc{q} &=& 0 \ \mbox{a.e. on the set} \ \{ \vr = 0\} \Big\} = 1 ,
%\end{eqnarray}
%for certain constants $0<M_1<M_2$,
%\[
%\int_{ L^\gamma \times L^{2\gamma/(\gamma + 1)}} \left\| \frac{1}{2} \frac{ |\vc{q}|^2}{\vr}  + H(\vr) \right\|_{L^1(\tor)}^p
%\ {\rm d} \Lambda (\vr, \vc{q} ) \leq c(p) < \infty
%\]
%for any $1 \leq p < \infty$.
Assume that for the initial law $\Lambda$ there exists $M\in(0,\infty)$ such that
\begin{equation*}
\Lambda\Big\{(\rho,\bfq)\in L^\gamma(\mt)\times L^\frac{2\gamma}{\gamma+1}(\mt);\, \rho\geq0,\;(\rho)_{\mt}\leq M,\;\bfq(x)=0\;\text{if}\;\rho(x)=0\Big\}=1,
\end{equation*}
and that for all $p\in[1,\infty)$ the following moment estimate holds true
\begin{equation*}
\int_{L^\gamma_x\times L^\frac{2\gamma}{\gamma+1}_x}\bigg\|\frac{1}{2}\frac{|\bfq|^2}{\rho}+ \rho^\gamma\bigg\|_{L^1_x}^p\,\dif\Lambda(\rho,\bfq)\leq C_\varepsilon.
\end{equation*}
Then the Navier--Stokes system (\ref{eq1}--\ref{eq2}) possesses at least one weak martingale solution with the initial law $\Lambda$.
In addition, the equation of continuity (\ref{eq1}) holds also in the renormalized sense
\[
\dif \left< b(\vr), \psi \right> = \left< b(\vr) \vu, \nabla \psi \right> {\rm d}t - \left<  \left( b(\vr) - b'(\vr) \vr \right) \diver \vu,
\psi \right>\dt
\]
for any test function $\psi \in C^\infty(\tor)$, and any $b \in C^1[0,\infty)$, $b'(\vr) = $ for $\vr \geq \vr_g$.
Moreover, the energy inequality
\begin{align}
\label{energy}
\begin{aligned}
\E\bigg[ &\sup_{t \in [0,T]} \intTor{ \Big[ \frac{|\vr \vu|^2 }{2 \vr} + \frac{1}{\varepsilon^2} H(\vr) \Big] } \bigg]^p
  \\
  &\qquad+ \E\bigg[ \int_0^T \intTor{ \nu|\nabla \vu|^2+(\lambda+\nu)|\diver \bfu|^2} \dt  \bigg]^p \\
 &\leq
c(p,T) \,\E\bigg[ \bigg( \intTor{ \Big[ \frac{|(\vr \vu) (0)|^2 }{2 \vr(0)}+ \frac{1}{\varepsilon^2} H(\vr(0)) \Big] } \bigg)^p +1\bigg]
\end{aligned}
\end{align}
hold
for any $1 \leq p < \infty$, where
\[
H(\vr), \ H''(\vr) = \frac{p'(\vr)}{\vr}
\]
is the so-called pressure potential.
\end{theorem}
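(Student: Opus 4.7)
The strategy is to adapt the classical three-level approximation scheme of Feireisl--Novotn\'y--Petzeltov\'a to the stochastic setting, as done in detail in \cite{BrHo}. First I would regularize the system by adding artificial viscosity $\varepsilon\Delta\varrho$ to the continuity equation, an artificial pressure $\delta\varrho^\beta$ with $\beta$ sufficiently large to the momentum equation, and the associated term $\varepsilon\nabla\varrho\cdot\nabla\bfu$ needed to preserve the energy balance. At the innermost level I would solve the momentum equation by a Galerkin approximation on $X_n=\Span\{e_1,\dots,e_n\}$: for a given smooth $\bfu_n$ the parabolic continuity equation admits a unique strong solution $\varrho_n=\varrho_n[\bfu_n]\geq 0$ (by Ladyzhenskaya--Solonnikov theory), which reduces the momentum equation in $X_n$ to a stochastic ODE with locally Lipschitz coefficients. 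A Banach fixed-point/stopping-time argument combined with the energy estimate then yields a global pathwise solution.

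Once the Galerkin level is in hand I would derive the a priori energy estimate by applying It\^o's formula to the functional $\intTor{\tfrac12|\bfu_n|^2\varrho_n+\tfrac{1}{\varepsilon^2}H(\vr_n)+\delta H_\beta(\vr_n)}$. The dissipation absorbs the gradient terms, the stochastic integral is a martingale (so drops after taking expectation), and the It\^o correction is controlled by the growth assumptions \eqref{growth1-}--\eqref{growth1} together with the moment bound on $\Lambda$; Burkholder--Davis--Gundy provides the $p$-th moment bound appearing in \eqref{energy}. These estimates are uniform in $n$, $\varepsilon$, $\delta$ after using the assumption on $(\rho)_{\mt}$ under $\Lambda$, which by the renormalized continuity equation is conserved along trajectories.

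Next one has to pass successively to the limits $n\to\infty$, $\varepsilon\to 0$, and finally $\delta\to 0$. The lack of topology on $\Omega$ forces the use of the Jakubowski--Skorokhod representation theorem on a suitable quasi-Polish path space carrying the weak topologies of Lebesgue and Sobolev spaces; on the new probability space I obtain almost sure convergence of the approximations and standard martingale identification (Bensoussan-type arguments, e.g.\ via cross-variation) identifies the limiting stochastic integral against $\varPhi(\varrho,\varrho\bfu)\,\dd W$, using that $\varPhi$ is linear in $\bq$ and continuous in $\vr$. Existence of a Wiener process is recovered by the L\'evy characterization.

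The main obstacle is, as in the deterministic theory, the passage to the limit $\delta\to 0$ in the pressure $\vr^\gamma$, for which only weak convergence of $\vr$ is available. I would follow the effective viscous flux strategy: test the momentum equation with $\nabla\Delta^{-1}\bigl(1_{\tor}\vr\bigr)$ and its limit analogue, and compare them using the renormalized continuity equation to obtain the identity
\begin{equation*}
\overline{\vr^\gamma\,\vr}-\overline{\vr^\gamma}\,\vr=(\lambda+2\nu)\bigl(\overline{\vr\Div\bfu}-\vr\,\Div\bfu\bigr)
\end{equation*}
$\prst$-almost surely. Combined with the propagation of oscillations defect measure and convexity of $z\mapsto z\log z$, this yields strong convergence of $\vr$ in $L^1(\Omega\times Q)$, hence convergence of $\vr^\gamma$. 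The It\^o corrections produced when applying the renormalization $b(\vr)=\vr\log\vr$ in the stochastic framework are handled as in \cite{BrHo} by a Friedrichs commutator argument combined with It\^o's formula for $b\in C^2$ and a density argument. The renormalized continuity equation and the energy inequality \eqref{energy} in the final limit are then obtained by lower semicontinuity and Fatou's lemma.
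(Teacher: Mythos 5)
Your proposal correctly reconstructs the approach of \cite{BrHo}, which the paper itself cites for the full existence proof; the paper only supplies a formal derivation of the energy inequality \eqref{energy} on the level of smooth solutions. There the argument is to apply It\^o's formula to the kinetic-energy functional $\frac12\intTor{|\bfq|^2/\varrho}$, use the renormalized continuity equation to turn the pressure term into $\frac{1}{\varepsilon^2}\intTor{H(\vr)}$, bound the quadratic variation and the stochastic integral via \eqref{growth1-}--\eqref{growth1} and the Burkholder--Davis--Gundy inequality, and close with Young's and Gronwall's lemmas --- exactly the computation you describe at the Galerkin level, so the energy part matches. The multi-layer approximation scheme, the Jakubowski--Skorokhod compactness argument on quasi-Polish path spaces, and the effective-viscous-flux identity for strong density convergence that you outline are likewise the ingredients of \cite{BrHo}.

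One small imprecision worth noting: the continuity equation \eqref{eq1} contains no stochastic forcing, so it is a random transport PDE rather than an SPDE; its renormalization is a pathwise DiPerna--Lions/Friedrichs-commutator argument with no It\^o correction terms. It\^o's formula only enters when the renormalized density quantity (the pressure potential $H(\vr)$) is paired with the stochastic momentum balance in the energy computation, not in the derivation of the renormalized continuity equation itself. This does not affect the soundness of the overall strategy, but the phrase ``It\^o corrections produced when applying the renormalization $b(\vr)=\vr\log\vr$'' overstates the stochastic content of that step.
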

\begin{remark}\label{rem:new}
The pressure potential $H$ is determined up to a linear function. In particular, one can take
\[
H(\varrho) =  \frac{1}{\gamma - 1} \left( \varrho^{\gamma} - \gamma \overline{\varrho}^{\gamma - 1}(\varrho - \overline{\varrho}) - \overline{\varrho}^\gamma \right)
\]
for any constant $\overline{\varrho} > 0$.
\end{remark}
\begin{remark}
Because of \eqref{energy} this solution is called finite energy weak martingale solution. The constant $c(p,T)$ depends on $T$ (via Gronwall's lemma) and the constants in \eqref{growth1-}--\eqref{growth2} but is independent of $\varepsilon$.
\end{remark}
\begin{proof}
The proof of \eqref{energy} is somewhat hidden in \cite{BrHo} as multi-layer approximation scheme is needed. For the reader's convenience we give a formal proof on the level of smooth solutions.
In order to obtain a priori estimates we apply It\^{o}'s formula to the functional $f(\bfq, \varrho)=\frac{1}{2}\int_ {\mathbb T^N} \frac{|\bfq|^2}{ \varrho}\dx$. This corresponds exactly to the test with $ \bfu$ in the momentum equation and $\frac{1}{2}|\bfu|^2$ in the continuity equation from the deterministic case. We gain
\begin{align*}
\frac{1}{2}&\int_ {\mathbb T^N}  \varrho| \bfu|^2\dx\\&=\frac{1}{2}\int_ {\mathbb T^N}  \frac{|(\varrho\bfu)(0)|^2}{\varrho(0)}\dx-\nu\int_0^t\int_ {\mathbb T^N} |\nabla \bfu|^2\dxs-(\lambda+\nu)\int_0^t\int_{\mt}|\diver\bfu|^2\dxs\\&+\int_0^t\int_ {\mathbb T^N} \varrho \bfu\otimes \bfu:\nabla \bfu\dxs
+\frac{1}{\varepsilon^2}\int_0^t\int_ {\mathbb T^N} \varrho^\gamma\diver \bfu\dxs-\frac{1}{2}\int_0^t\int_ {\mathbb T^N}| \bfu|^2\,\dd \varrho\\&+\int_0^t\int_ {\mathbb T^N} \bfu\cdot\Phi(\varrho, \varrho \bfu)\dx\,\dd W
+\frac{1}{2}\int_0^t \varrho^{-1}\,\dd\Big\langle\int_0^{\cdot}\Phi(\varrho,  \varrho \bfu)\,\dd W\Big\rangle.
\end{align*}
In the following we use the renormalized equation
\begin{equation}\label{4116}
\int_0^t\int_ {\mathbb T^N} \varrho^\gamma \diver \bfu\dxs=- \int_ {\mathbb T^N} H(\varrho) \dx + \int_ {\mathbb T^N} H(\varrho(0)) \dx
\end{equation}
In fact, \eqref{4116} is a consequence of the mass conservation
\begin{align}\label{eq:mass}
\int_{\mt}\varrho(t)\dx=\int_{\mt}\varrho(0)\dx\quad\forall t\in[0,T]
\end{align}
which holds due \eqref{eq1}.
Using \eqref{4116} we gain
\begin{align*}
\frac{1}{2}\int_ {\mathbb T^N} & \varrho| \bfu|^2\dx+\nu_0\int_0^t\int_ {\mathbb T^N} |\nabla \bfu|^2\dxs+\frac{1}{\varepsilon^2}\int_ {\mathbb T^3} H(\varrho) \dx\\
&\leq\frac{1}{2}\int_ {\mathbb T^N}  \frac{|(\varrho\bfu)(0)|^2}{\varrho(0)}\dx+\frac{1}{\varepsilon^2}\int_ {\mathbb T^N} H(\varrho(0))\dx\\
&+\int_0^t\int_ {\mathbb T^N} \bfu\cdot\Phi(\varrho,  \varrho \bfu)\dx\,\dd W
+\frac{1}{2}\int_0^t \varrho^{-1}\,\dd\Big\langle\int_0^{\cdot}\Phi(\varrho,  \varrho\bfu)\,\dd W\Big\rangle\\
&=:\frac{1}{2}\int_ {\mathbb T^N}  \frac{|(\varrho\bfu)(0)|^2}{\varrho(0)}\dx+\frac{1}{\varepsilon^2}\int_ {\mathbb T^N} H(\varrho(0))\dx+T_1(t)+T_2(t).
\end{align*}
We apply the $q$-th power on both sides and then take the expectation. Due to \eqref{growth1} and \eqref{eq:mass} we have
\begin{align}\label{eq:789}
\begin{aligned}
T_2(t)&\leq \frac{1}{2}\sum_k \int_0^t\int_ {\mathbb T^N} \varrho^{-1} |\bfg_k( \varrho, \varrho \bfu)|^2\dxt
\leq \,c\,\int_0^t\int_ {\mathbb T^N} \big(\varrho | \bfu|^2+\varrho^{\gamma}+1\big)\dxt\\
&\leq \,c\,\int_0^t\int_ {\mathbb T^N} \Big(\varrho | \bfu|^2+\frac{1}{\varepsilon^2} H(\varrho) +1\Big)\dxt.
\end{aligned}
\end{align}
%So we have
%\begin{align*}
%\E\bigg[\sup_{t\in(0,T)}\int_ {\mathbb T^N} & \varrho_\varepsilon| \bfu_\varepsilon|^2\dx+\int_0^T\int_ {\mathbb T^N} |\nabla \bfu_\varepsilon|^2\dxs+\sup_{t\in(0,T)}\int_ {\mathbb T^N} \frac{\varrho^\gamma_\varepsilon}{\gamma-1}\dx\bigg]^q\\
%&\leq \,c\,\E\bigg[\int_{\mathbb T^N}\Big(\frac{1}{2}\frac{\bfq_0^2}{\rho_0^2}+\frac{\varrho_0^ \gamma}{\gamma-1}\Big)\dx+\sup_{t\in(0,T)}|T_2(t)|^q\bigg].
%\end{align*}
As a consequence of Burgholder-Davis-Gundy inequality, \eqref{growth1} and \eqref{eq:mass} we gain for $p\geq1$
\begin{align*}
\E\bigg[\sup_{t\in(0,T)}|T_2(t)|\bigg]^{p}&=\E\bigg[\sup_{t\in(0,T)}\Big|\int_0^t\int_ {\mathbb T^N} \bfu\cdot\Phi(\varrho,  \varrho \bfu)\dx\,\dd W_\sigma\Big|\bigg]^{p}\\
&=\E\bigg[\sup_{t\in(0,T)}\Big|\int_0^t\sum_i\int_ {\mathbb T^N} \bfu\cdot \bfg_i(\varrho,  \varrho\bfu)\dx\,\dd\beta_i(\sigma)\Big|\bigg]^{p}\\
&\leq c\,\E\bigg[\int_0^T\sum_i\bigg(\int_ {\mathbb T^N} \bfu\cdot \bfg_i( \varrho, \varrho\bfu)\dx\bigg)^2\dt\bigg]^{\frac{p}{2}}\\
&\leq c\,\E\bigg[\int_0^T\sum_i \bigg(\int_ {\mathbb T^N} \varrho| \bfu|^2\dx\bigg)\bigg(\int_ {\mathbb T^N} \varrho^{-1}| \bfg_i(\varrho, \varrho\bfu)|^2\dx\bigg)\dt\bigg]^{\frac{p}{2}}%\\
%&\leq c\,\E\bigg[\int_0^T\bigg(\int_ {\mathbb T^N} \varrho| \bfu|^2\dx\bigg)^2\dt+\int_0^T\bigg(\int_ {\mathbb T^N} \varrho^\gamma\dx\bigg)^2\dt+1\bigg]^{\frac{p}{2}}\\
%&\leq c\,\E\bigg[\int_0^T\bigg(\int_ {\mathbb T^N} \varrho| \bfu|^2\dx\bigg)^2\dt+\int_0^T\bigg(\int_ {\mathbb T^N} \frac{\varrho^\gamma-\ell(\varrho)}{\varepsilon^2(\gamma-1)}\dx\bigg)^2\dt\bigg]^{\frac{p}{2}}\\
%&+c\,\E\bigg[\int_{\mt}\ell(\varrho(0))+1\bigg]^{\frac{p}{2}}
\end{align*}
and by Young's inequality and a computation similar to \eqref{eq:789} we gain for every $\delta>0$ 
\begin{align*}
\E\bigg[\sup_{t\in(0,T)}|T_2(t)|\bigg]^{p}
&\leq \delta\,\E\bigg[\sup_{t\in(0,T)}\int_ {\mathbb T^N} \varrho| \bfu|^2\dx\bigg]^{p}\\
&\quad+c(\delta)\,\E\bigg[\int_0^T\int_ {\mathbb T^N} \Big(\varrho | \bfu|^2+\frac{1}{\varepsilon^2} H(\varrho) +1\Big)\dx\dt\bigg]^p.
%&+\delta\,\E\bigg[\sup_{t\in(0,T)}\int_ {\mathbb T^N} \frac{(\varrho^\gamma-\ell(\varrho))}{\varepsilon^2(\gamma-1)}\dx\bigg]^p+c(\delta)\,\E\bigg[\int_0^T\int_ {\mathbb T^N}\frac{(\varrho^\gamma-\ell(\varrho))}{\varepsilon^2(\gamma-1)}\dxt\bigg]^{p}.
\end{align*}
Finally, taking $\delta$ small enough and applying
%\begin{align*}
%\E&\bigg[\sup_{t\in(0,T)}\int_ {\mathbb T^N} \varrho| \bfu|^2\dx+\sup_{t\in(0,T)}\int_ {\mathbb T^N} \frac{(\varrho^\gamma-\ell(\varrho))}{\varepsilon^2(\gamma-1)}\dx+\nu_0\int_0^T\int_ {\mathbb T^N} |\nabla \bfu|^2\dxs\bigg]^p\\
%&\leq \,c\,\E\bigg[\int_{\mathbb T^N}\frac{1}{2}\frac{(\varrho\bfu)(0)^2}{\varrho(0)^2}\dx+\int_ {\mathbb T^N} \ell(\varrho(0))\dx+\int_{\mt}\frac{(\varrho(0)^ \gamma-\ell(\varrho(0)))}{\varepsilon^2(\gamma-1)}\dx+1\bigg]^p\\
%&+c\,\E\bigg[\int_0^T\int_ {\mathbb T^N} \varrho| \bfu|^2\dxt+\int_ {\mathbb T^N} \frac{(\varrho^\gamma-\ell(\varrho))}{\varepsilon^2(\gamma-1)}\dxt\bigg]^{p}.
%\end{align*}
Grownwall's lemma, the inequality \eqref{energy} follows.
\end{proof}
\begin{remark}
In Def. \ref{def:sol} (j) the continuity equation is stated in the renormalized sense.
This is part of the existence result in \cite{BrHo} but will not be used in the remainder of the paper.
\end{remark}

%\begin{hypothesis}
%dfs
%\end{hypothesis}

\color{black}

%
%\begin{theorem}\label{thm:exist}
%Assume that for the initial law $\Lambda$ there exists $M\in(0,\infty)$ such that
%\begin{equation*}
%\Lambda\Big\{(\rho,\bfq)\in L^\gamma(\mt)\times L^\frac{2\gamma}{\gamma+1}(\mt);\, \rho\geq0,\;(\rho)_{\mt}\leq M,\;\bfq(x)=0\;\text{if}\;\rho(x)=0\Big\}=1,
%\end{equation*}
%and that for all $p\in[1,\infty)$ the following moment estimate holds true
%\begin{equation*}
%\int_{L^\gamma_x\times L^\frac{2\gamma}{\gamma+1}_x}\bigg\|\frac{1}{2}\frac{|\bfq|^2}{\rho}+\frac{1}{\varepsilon^2(\gamma-1)}\rho^\gamma\bigg\|_{L^1_x}^p\,\dif\Lambda(\rho,\bfq)\leq C_\varepsilon.
%\end{equation*}
%Then there exists a finite energy weak martingale solution to \eqref{eq:} with the initial data $\Lambda$.
%\end{theorem}

Concerning the incompressible Navier-Stokes system \eqref{eq:lim}, several notions of solution are typically considered depending on the space dimension. From the PDE point of view, we restrict ourselves to weak solutions (although more can be proved in dimension two), i.e. \eqref{eq:lim} is satisfied in the sense of distributions. From the probabilistic point of view, we will consider two concepts, namely, pathwise (or strong) solutions and martingale (or
weak) solutions. In the former one the underlying probability space as well as the driving process is fixed in advance while
in the latter case these stochastic elements become part of the solution of the problem. Clearly, existence of a pathwise solution is stronger and implies existence of a martingale solution. Besides, due to classical Yamada-Watanabe-type argument (see e.g. \cite{krylov}, \cite{pr07}), existence of a pathwise solution follows from existence of a martingale solution together with pathwise uniqueness. The difference lies also in the way how the initial condition is posed: for pathwise solutions we are given a random variable $\bfu_0$ whereas for martingale solutions we can only prescribe an initial law $\Lambda$.

Note that due to our assumptions on the operator $\varPhi$, the stochastic perturbations that we obtain in the limit system \eqref{eq:lim} is affine linear function of the velocity and takes the following form
$$\Psi(\bfv)e_k\,\dif\beta_k=\mathcal{P}_H\varPhi(1,\bfv)e_k\,\dif\beta_k=\big(\mathcal{P}_H \bfh_k(1)+\alpha_k\bfv\big)\dif\beta_k.$$
Besides, due to \eqref{growth1-}, \eqref{growth1} it holds true that
\begin{align}\label{eq:psi}
\begin{aligned}
\|\Psi(\bfv)\|_{L_2(\mathfrak U;L^2_x)}^2&\leq C\big(1+\|\bfv\|_{L^2_x}^2\big),\\
\|\Psi(\bfv)-\Psi(\bfw)\|_{L_2(\mathfrak U;L^2_x)}^2&\leq C\|\bfv-\bfw\|_{L^2_x}^2.
\end{aligned}
\end{align}

In dimension three, existence of a strong solution which is closely related to uniqueness is one the celebrated Millenium Prize Problems and remains unsolved. Therefore, we consider weak martingale solutions, see for instance \cite{CaGa} or \cite{FlGa}.

\begin{definition}\label{def:inc}
Let $\Lambda$ be a Borel probability measure on $L^2(\mt)$. Then
$$\big((\Omega,\mf,(\mf_t),\prst),\bfu,W)$$
is called a weak martingale solution to \eqref{eq:lim} with the initial data $\Lambda$ provided
\begin{enumerate}
\item $(\Omega,\mf,(\mf_t),\prst)$ is a stochastic basis with a complete right-continuous filtration,
\item $W$ is an $(\mf_t)$-cylindrical Wiener process,
\item the velocity field $\vu$ is $(\mf_t)$-adapted, $\vu \in C_w([0,T];L^2_{\diver}(\mt))\cap L^2(0,T; W^{1,2}_{\diver}(\tor))$ $\prst$-a.s.\footnote{By $C_w([0,T];L^2_{\diver}(\mt))$ we denote the space of weakly continuous functions with values in the space of divergence-free vector fields in $L^2(\mt)$.} and
\[
\E\bigg[\sup_{(0,T)}\|\bfu\|_{L^2(\mt)}^2\bigg]^p+\E\bigg[\bigg( \int_0^T \| \vu \|^2_{W^{1,2}(\tor)} \ \dt \bigg)^p \bigg] < \infty\ \mbox{for all}\ 1 \leq p < \infty;
\]
%\item the velocity $\bfu$ is $(\mf_t)$-adapted and
%$$\bfu\in L^2(\Omega;L^2(0,T;W_{\text{div}}^{1,2}(\mt)))\cap L^2(\Omega;C_w([0,T];L^2_{\text{div}}(\mt))),$$
%\item $\Lambda=\prst\circ\bfu(0)^{-1}$,
%\item $\varPhi(\varrho,\varrho\bfu)\in L^2(\Omega\times[0,T],\mathcal{P},\dif\prst\otimes\dif t;L_2(\mathfrak{U};W^{-l,2}(\mt)))$ for some $l>\frac{3}{2}$,
%\item for all $p\in[1,\infty)$ the following energy inequality holds true
%\begin{equation}\label{energy}
%\begin{split}
%&\stred\bigg[\sup_{0\leq t\leq T}\int_{\mt}\Big(\frac{1}{2}\varrho|\bu|^2+\frac{a}{\varepsilon^2(\gamma-1)}\varrho^\gamma\Big)\,\dif x+\int_0^T\int_{\mt}\nu|\nabla\bu|^2+(\lambda+\nu)|\divergence\bu|^2\,\dif x\,\dif s\bigg]^p\leq C_\varepsilon,
%\end{split}
%\end{equation}
\item for all
 $\bfvarphi\in C_{\text{div}}^\infty(\mt)$ and all $t\in[0,T]$ it holds $\prst$-a.s.
\begin{align*}
\big\langle\bfu(t),\bfvarphi\big\rangle&=\big\langle\bfu(0),\bfvarphi\big\rangle+\int_0^t\big\langle\bfu\otimes\bfu,\nabla\bfvarphi\big\rangle\,\dif s-\nu\int_0^t\big\langle\nabla\bfu,\nabla\bfvarphi\big\rangle\,\dif s+\int_0^t\big\langle\varPsi(\bfu)\,\dif W,\bfvarphi\big\rangle.
\end{align*}
%\item Let $b\in C^1(\R)$ such that $b'(z)=0$ for all $z\geq M_b$. Then
%for all $\psi\in C^\infty(\mt)$ and all $t\in[0,T]$ it holds $\prst$-a.s.
%\begin{align*}
%\big\langle b(\varrho(t)),\psi\big\rangle&=\big\langle b(\varrho(0)),\psi\big\rangle+\int_0^t\big\langle b(\varrho)\bfu,\nabla\psi\big\rangle\,\dif s-
%\int_0^t\big\langle \big(b'(\varrho)\varrho-b(\varrho)\bfu)\big)\divergence\bfu,\psi\big\rangle\,\dif s.
%\end{align*}
\end{enumerate}
\end{definition}

Here and hereafter, the substrict ${\rm div}$ refers to the space of solenoidal (divergenceless) functions.

Under the condition \eqref{eq:psi}, the following existence result holds true and can be found for instance in \cite{CaGa} and \cite{FlGa}.

\begin{theorem}\label{thm:inc}
Let $\Lambda$ be a Borel probability measure on $L^2(\mt)$ such that for all $p\in[1,\infty)$
$$\int_{L^2_x}\|\bfv\|_{L^2_x}^p\dif\Lambda(\bfv)\leq C(p).$$
Then there exists a weak martingale solution to \eqref{eq:lim} with initial law $\Gamma$.
\end{theorem}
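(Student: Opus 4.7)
The plan is to follow the standard Galerkin plus stochastic compactness scheme, since the structural assumption \eqref{eq:psi} (linear growth and Lipschitz continuity of $\Psi$ in $L_2(\mathfrak U; L^2_x)$) together with the moment bound on $\Lambda$ already supplies all the estimates that a textbook treatment of the 2D/3D incompressible stochastic Navier--Stokes system needs.

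First I would fix a smooth orthonormal basis $\{\bfw_k\}_{k\geq 1}$ of $L^2_{\mathrm{div}}(\mt)$ (e.g.\ divergence-free trigonometric polynomials, which are eigenfunctions of the Stokes operator on the torus) and let $P_n$ denote the orthogonal projection onto $H_n=\Span\{\bfw_1,\dots,\bfw_n\}$. Set up the Galerkin SDE on $H_n$,
\begin{equation*}
\dif \bfu_n + P_n\!\left[\divergence(\bfu_n\otimes\bfu_n)-\nu\Delta\bfu_n\right]\dif t = P_n\Psi(\bfu_n)\,\dif W,\qquad \bfu_n(0)=P_n\bfu_0,
\end{equation*}
with $\bfu_0$ distributed according to $\Lambda$ on some initial stochastic basis. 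Because $P_n\Psi$ is Lipschitz and of linear growth on the finite-dimensional space $H_n$, classical SDE theory yields a unique global strong solution $\bfu_n$.

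Next I would derive $\varepsilon$-free a priori bounds. Applying It\^o's formula to $\|\bfu_n(t)\|_{L^2_x}^2$, using that the trilinear term vanishes on divergence-free vector fields and exploiting \eqref{eq:psi} together with the Burkholder--Davis--Gundy inequality as in the proof of Theorem \ref{thm:exist}, one obtains for every $p\in[1,\infty)$
\begin{equation*}
\E\bigg[\sup_{t\in[0,T]}\|\bfu_n(t)\|_{L^2_x}^{2p}\bigg]+\E\bigg[\bigg(\int_0^T\|\nabla\bfu_n\|_{L^2_x}^2\dt\bigg)^{p}\bigg]\leq C(p,T),
\end{equation*}
with a constant independent of $n$, thanks to the moment hypothesis on $\Lambda$. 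Combining this energy bound with a fractional-in-time estimate on $\bfu_n$ (the deterministic drift contributes a $W^{1,q}(0,T;W^{-1,q})$ bound, and the stochastic part a $C^\alpha([0,T];W^{-l,2})$ bound by the standard Kolmogorov--Centsov argument) gives tightness of the laws of $(\bfu_n,W)$ on the path space
\begin{equation*}
\mathcal X = \bigl(L^2(0,T;L^2_x),\text{strong}\bigr)\cap\bigl(C([0,T];W^{-l,2}_x),\text{strong}\bigr)\cap\bigl(C_w([0,T];L^2_x)\bigr)\times C([0,T];\mathfrak U_0),
\end{equation*}
via the usual Aubin--Lions compactness.

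Then I would invoke the Jakubowski--Skorokhod theorem to obtain, on a new stochastic basis $(\tilde\Omega,\tilde\mf,(\tilde\mf_t),\tilde\prst)$, random variables $(\tilde\bfu_n,\tilde W_n)$ with the same laws as $(\bfu_n,W)$ and converging $\tilde\prst$-a.s.\ in $\mathcal X$ to some $(\tilde\bfu,\tilde W)$. The strong $L^2_{t,x}$ convergence lets one pass to the limit in the nonlinear term $\divergence(\tilde\bfu_n\otimes\tilde\bfu_n)$, while the linear terms are routine; the Lipschitz bound in \eqref{eq:psi} identifies the limit of $P_n\Psi(\tilde\bfu_n)$ as $\Psi(\tilde\bfu)$ in $L^2(\tilde\Omega\times(0,T);L_2(\mathfrak U;L^2_x))$. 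The main obstacle, as always in this type of argument, is the stochastic integral: one must show that $\tilde W$ is a cylindrical Wiener process with respect to the filtration generated by $(\tilde\bfu,\tilde W)$ and that the stochastic integrals $\int_0^t P_n\Psi(\tilde\bfu_n)\dif\tilde W_n$ converge to $\int_0^t\Psi(\tilde\bfu)\dif\tilde W$. I would handle this by the standard martingale characterisation argument: one verifies, for suitable bounded continuous functionals of the pre-limit trajectories, that the processes
\begin{equation*}
\tilde M_n(t):=\tilde\bfu_n(t)-\tilde\bfu_n(0)-\int_0^t P_n\bigl[\nu\Delta\tilde\bfu_n-\divergence(\tilde\bfu_n\otimes\tilde\bfu_n)\bigr]\dif s
\end{equation*}
are square-integrable martingales with the correct quadratic variation and cross-variation with $\tilde W_n$, and these martingale identities transfer to the limit by the uniform moment bounds (Vitali). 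Finally, combining property (d) of Definition \ref{def:inc} obtained in this way with the pathwise regularity $\tilde\bfu\in C_w([0,T];L^2_{\mathrm{div}})\cap L^2(0,T;W^{1,2}_{\mathrm{div}})$ produced by the compactness step, and checking that $\mathrm{Law}(\tilde\bfu(0))=\Lambda$, completes the construction of the desired weak martingale solution.
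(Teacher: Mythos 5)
The paper does not prove Theorem~\ref{thm:inc}; it simply invokes \cite{CaGa} and \cite{FlGa}, remarking that under condition \eqref{eq:psi} the existence result ``can be found for instance'' there. Your Galerkin--plus--stochastic-compactness outline is precisely the argument of those references, so it is correct and matches the (cited) proof in spirit and structure; the only caveats are cosmetic (the statement's ``$\Gamma$'' is a typo for ``$\Lambda$'', and the filtration $(\tilde\mf_t)$ should be the $\tilde\prst$-augmented canonical filtration of $(\tilde\bfu,\tilde W)$, as the paper does elsewhere).
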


In dimension two, pathwise uniqueness for weak solutions is known under \eqref{eq:psi}, we refer the reader for instance to \cite{CaCu}, \cite{Ca}. Consequently, we may work with the definition of a weak pathwise solution.

\begin{definition}\label{def:inc2d}
Let $(\Omega,\mf,(\mf_t),\prst)$ be a given stochastic basis with an $(\mf_t)$-cylindrical Wiener process $W$ and let $\bfu_0$ be an $\mf_0$-measurable random variable. Then $\bfu$ is called a weak pathwise solution to \eqref{eq:lim} with the initial condition $\bfu_0$ provided
\begin{enumerate}
\item the velocity field $\vu$ is $(\mf_t)$-adapted, $\vu \in C_w([0,T];L^2_{\diver}(\mt))\cap L^2(0,T; W^{1,2}_{\diver}(\tor))$ $\prst$-a.s. and
\begin{align*}
\E\bigg[\sup_{(0,T)}\|\bfu\|^2_{L^2(\tor)}\bigg]^p+\E\bigg[\bigg( \int_0^T \| \vu \|^2_{W^{1,2}(\tor)} \ \dt \bigg)^p \bigg] < \infty\ \mbox{for all}\ 1 \leq p < \infty;
\end{align*}
%\item the velocity $\bfu$ is $(\mf_t)$-adapted and
%$$\bfu\in L^2(\Omega;L^2(0,T;W_{\text{div}}^{1,2}(\mt)))\cap L^2(\Omega;C_w([0,T];L^2_{\text{div}}(\mt))),$$
\item $\bfu(0)=\bfu_0$ $\p$-a.s.,
\item for all
 $\bfvarphi\in C_{\text{div}}^\infty(\mt)$ and all $t\in[0,T]$ it holds $\prst$-a.s.
\begin{align*}
\big\langle\bfu(t),\bfvarphi\big\rangle&=\big\langle\bfu_0,\bfvarphi\big\rangle+\int_0^t\big\langle\bfu\otimes\bfu,\nabla\bfvarphi\big\rangle\,\dif s-\nu\int_0^t\big\langle\nabla\bfu,\nabla\bfvarphi\big\rangle\,\dif s+\int_0^t\big\langle\varPsi(\bfu)\,\dif W,\bfvarphi\big\rangle.
\end{align*}
\end{enumerate}
\end{definition}

\begin{theorem}\label{thm:inc2d}
Let $N=2$. Let $(\Omega,\mf,(\mf_t),\prst)$ be a given stochastic basis with an $(\mf_t)$-cylindrical Wiener process $W$ and let $\bfu_0$ be an $\mf_0$-measurable random variable such that $\bfu_0\in L^p(\Omega;L^2(\mathbb T^2))$ for all $p\in[1,\infty)$. Then there exists a unique weak pathwise solution to \eqref{eq:lim} with the initial condition $\bfu_0$.
\end{theorem}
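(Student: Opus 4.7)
The plan is to run a Yamada--Watanabe argument: combine the existence of a weak martingale solution delivered by Theorem \ref{thm:inc} with pathwise uniqueness in dimension two, and invoke the classical principle from \cite{krylov,pr07} to produce a unique weak pathwise solution on the prescribed stochastic basis driven by the given $W$ with the prescribed initial datum $\bfu_0$. The law $\Lambda:=\prst\circ\bfu_0^{-1}$ is a Borel probability measure on $L^2(\mathbb{T}^2)$ whose $p$-th moments $\int_{L^2_x}\|\bfv\|_{L^2_x}^p\dif\Lambda(\bfv)=\E\|\bfu_0\|_{L^2_x}^p$ are all finite by hypothesis, so the hypothesis of Theorem \ref{thm:inc} is met and a weak martingale solution with initial law $\Lambda$ exists.

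For the uniqueness step, let $\bfu_1,\bfu_2$ be two weak pathwise solutions on the same stochastic basis, driven by the same $W$ and starting from the same $\bfu_0$, and set $\bfw:=\bfu_1-\bfu_2$. Because of the regularity in Definition \ref{def:inc2d}, $\bfw$ satisfies an abstract SPDE in the Gelfand triple $W^{1,2}_{\mathrm{div}}\hookrightarrow L^2_{\mathrm{div}}\hookrightarrow W^{-1,2}_{\mathrm{div}}$, so It\^o's formula applied to $t\mapsto\|\bfw(t)\|_{L^2_x}^2$ yields
\begin{align*}
\dif\|\bfw\|_{L^2_x}^2+2\nu\|\nabla\bfw\|_{L^2_x}^2\dif t&=-2\langle(\bfw\cdot\nabla)\bfu_1,\bfw\rangle\dif t+\|\Psi(\bfu_1)-\Psi(\bfu_2)\|_{L_2(\mathfrak{U};L^2_x)}^2\dif t\\
&\quad+2\langle\bfw,(\Psi(\bfu_1)-\Psi(\bfu_2))\dif W\rangle,
\end{align*}
the term $\langle(\bfu_2\cdot\nabla)\bfw,\bfw\rangle$ having been eliminated by $\divergence\bfu_2=0$. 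The two-dimensional Ladyzhenskaya inequality $\|\bfw\|_{L^4_x}^2\leq C\|\bfw\|_{L^2_x}\|\nabla\bfw\|_{L^2_x}$ combined with Young's inequality gives
\begin{equation*}
|\langle(\bfw\cdot\nabla)\bfu_1,\bfw\rangle|\leq\nu\|\nabla\bfw\|_{L^2_x}^2+C\|\nabla\bfu_1\|_{L^2_x}^2\|\bfw\|_{L^2_x}^2,
\end{equation*}
while the Lipschitz bound in \eqref{eq:psi} controls the It\^o correction by $C\|\bfw\|_{L^2_x}^2$. Introducing the stopping times $\tau_R:=\inf\{t\geq0:\int_0^t\|\nabla\bfu_1\|_{L^2_x}^2\dif s\geq R\}\wedge T$, taking expectation at $t\wedge\tau_R$ kills the martingale term and Gronwall's lemma yields $\E\|\bfw(t\wedge\tau_R)\|_{L^2_x}^2=0$. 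Since $\bfu_1\in L^2(0,T;W^{1,2})$ almost surely we have $\tau_R\to T$ a.s., and pathwise uniqueness follows. The Yamada--Watanabe principle then upgrades the martingale solution to the desired unique weak pathwise solution.

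The main obstacle is the handling of the drift term $\langle(\bfw\cdot\nabla)\bfu_1,\bfw\rangle$: in three dimensions no analogue of Ladyzhenskaya's inequality is strong enough to absorb it into $\nu\|\nabla\bfw\|_{L^2_x}^2$, which is precisely why pathwise uniqueness (and hence the present theorem) is restricted to $N=2$. Even in two dimensions the bound $\|\nabla\bfu_1\|_{L^2_x}^2$ is only $\dif t$-integrable $\prst$-a.s., not uniformly in $\omega$, so the Gronwall step must be combined with the stopping-time localization above (or, equivalently, with a stochastic Gronwall lemma). Once pathwise uniqueness is secured, the Yamada--Watanabe machinery is entirely standard in the present Polish-space setting and requires no new ingredient.
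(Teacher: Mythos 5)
Your proposal is correct in substance, and it fills in a proof that the paper itself does not supply. The paper treats Theorem \ref{thm:inc2d} as a known result: it points to \cite{CaGa}, \cite{FlGa} for existence of a weak martingale solution (Theorem \ref{thm:inc}) and to \cite{CaCu}, \cite{Ca} for pathwise uniqueness in two dimensions, and then implicitly invokes a Yamada--Watanabe argument without writing it out. Your proof reconstructs exactly that chain: existence of a martingale solution, an It\^o/Ladyzhenskaya energy estimate giving pathwise uniqueness, and the Yamada--Watanabe principle to produce a unique pathwise solution on the prescribed basis. So there is no methodological divergence to compare; you are simply making explicit what the paper delegates to the literature.

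Two small points you should tighten if you want the argument to be fully self-contained. First, ``taking expectation at $t\wedge\tau_R$ kills the martingale term'' needs a word of justification: $\tau_R$ as defined controls $\int_0^{\cdot}\|\nabla\bfu_1\|^2_{L^2_x}\dt$ but not the $L^2$-size of the stochastic integrand, so either add a further localizing sequence to make the stopped stochastic integral a true martingale, or, more cleanly, run the estimate with the exponential weight $\exp\big(-\int_0^t(2C\|\nabla\bfu_1\|^2_{L^2_x}+C)\,\dif s\big)$ so that the weighted quantity is a nonnegative local supermartingale started at zero and hence vanishes identically (this is precisely the ``stochastic Gronwall'' you allude to). Second, the phrase ``Polish-space setting'' deserves a caveat: the natural path space $C_w([0,T];L^2_{\mathrm{div}})\cap L^2(0,T;W^{1,2}_{\mathrm{div}})$ with the topology used in Definition \ref{def:inc2d} is not Polish (it is only quasi-Polish), so one should either run Yamada--Watanabe in a Polish space such as $L^2(0,T;L^2_{\mathrm{div}})$, or use the variational framework of \cite{pr07} where the Yamada--Watanabe theorem is formulated at the level of the Gelfand triple and the topological subtlety does not arise. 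Neither point affects the correctness of the overall scheme.
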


The main results of the present paper are following.

\begin{theorem}\label{thm:main}
Let $\Lambda$ be a given Borel probability measure on $L^2(\mt)$. Let $\Lambda_\varepsilon$ be a Borel probability measure on $L^\gamma(\mt)\times L^\frac{2\gamma}{\gamma+1}(\mt)$ such that for some constant $M>0$ (independent of $\varepsilon$) it holds true that
\begin{equation*}
\Lambda_\varepsilon\bigg\{(\rho,\bfq)\in L^\gamma(\mt)\times L^\frac{2\gamma}{\gamma+1}(\mt);\, \rho\geq\frac{1}{M},\,(\rho)_{\mt}\leq M,\;\Big|\frac{\rho-1}{\varepsilon}\Big|\leq M\bigg\}=1,
\end{equation*}
for all $p\in[1,\infty)$,
\begin{equation*}%\label{initial}
\int_{L^\gamma_x\times L^\frac{2\gamma}{\gamma+1}_x}\bigg\|\frac{1}{2}\frac{|\bfq|^2}{\rho} \bigg\|_{L^1_x}^p\,\dif \Lambda_\varepsilon (\rho,\bfq)\leq C(p),
\end{equation*}
and that the marginal law of $\Lambda_\varepsilon$ corresponding to the second component converges to $\Lambda$ weakly in the sense of measures on $L^\frac{2\gamma}{\gamma+1}(\mt).$
If
$\big((\Omega^\varepsilon,\mf^\varepsilon,(\mf^\varepsilon),\prst^\varepsilon),\varrho_\varepsilon,\bfu_\varepsilon,W_\varepsilon\big)$ is a finite energy weak martingale solution to \eqref{eq:} with the initial law $\Lambda_\varepsilon$, $\varepsilon\in (0,1)$, then\footnote{If a topological space $X$ is equipped with the weak topology we write $(X,w)$.}
\begin{align*}
\varrho_\varepsilon&\rightarrow1\quad\text{in law on}\quad L^\infty(0,T;L^\gamma(\mt)),\\
\bfu_\varepsilon&\rightarrow\bfu\quad\text{in law on}\quad \big(L^2(0,T;W^{1,2}(\mt)),w\big),
\end{align*}
where $\bfu$ is a weak martingale solution to \eqref{eq:lim} with the initial law $\Lambda$.
\end{theorem}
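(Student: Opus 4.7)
\medskip

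\textbf{Proof proposal for Theorem \ref{thm:main}.} My plan is to follow the Lions--Masmoudi strategy transplanted to the stochastic setting, proceeding in four stages: uniform bounds, analysis of acoustic waves, stochastic compactness, and identification of the limit.

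\emph{Step 1: Uniform bounds via the modulated energy.} I would apply the energy inequality \eqref{energy} with the pressure potential centered at $\overline{\vr}=1$, as in Remark \ref{rem:new}, so that $H(\vr)$ vanishes to second order at $\vr=1$. Using the assumption $|\vr-1|\leq M\varepsilon$ on the initial law $\Lambda_\varepsilon$, together with the second moment bound on $|\bfq|^2/\vr$, the right-hand side of \eqref{energy} is bounded uniformly in $\varepsilon$. This yields, for every $p\in[1,\infty)$, uniform bounds for $\E\sup_t\|(\vr_\varepsilon-1)/\varepsilon\|_{L^\gamma\cap L^2}^p$, for $\E\sup_t\|\sqrt{\vr_\varepsilon}\bfu_\varepsilon\|_{L^2}^{2p}$, and for $\E(\int_0^T\|\bfu_\varepsilon\|_{W^{1,2}}^2\dt)^p$. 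Combined with mass conservation \eqref{eq:mass} and the lower bound $\vr_\varepsilon(0)\geq 1/M$ (which is, however, only at the initial time; in the interior one uses only $\vr_\varepsilon\to 1$ in $L^\gamma$), these estimates provide the functional analytic framework for the limit passage.

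\emph{Step 2: Acoustic waves.} Setting $r_\varepsilon:=(\vr_\varepsilon-1)/\varepsilon$ and decomposing $\vr_\varepsilon\bfu_\varepsilon=\mathcal{P}_H(\vr_\varepsilon\bfu_\varepsilon)+\nabla\Delta^{-1}\Div(\vr_\varepsilon\bfu_\varepsilon)$, the system \eqref{eq:}, after rescaling, yields a linear acoustic wave system of the form
\begin{align*}
\varepsilon\,\dif r_\varepsilon+\Div\mathbf{V}_\varepsilon\dt&=0,\\
\varepsilon\,\dif\mathbf{V}_\varepsilon+\gamma\nabla r_\varepsilon\dt&=\varepsilon\,\mathcal{P}_H^\perp\varPhi(\vr_\varepsilon,\vr_\varepsilon\bfu_\varepsilon)\dif W+\varepsilon\,\mathbf{F}_\varepsilon\dt,
\end{align*}
where $\mathbf{V}_\varepsilon:=\mathcal{P}_H^\perp(\vr_\varepsilon\bfu_\varepsilon)$ and $\mathbf{F}_\varepsilon$ collects the viscous, convective, and nonlinear pressure remainders, which are uniformly bounded in suitable negative Sobolev spaces thanks to Step 1. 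I would then expand the acoustic variables in the trigonometric basis on $\mt$ and exploit the Duhamel formula to derive estimates in the style of the Lions--Masmoudi local method: the stochastic term enters linearly and is handled by It\^o's isometry; the forcing terms $\mathbf{F}_\varepsilon$ produce contributions vanishing after time averaging against a test function due to the rapid $1/\varepsilon$ oscillations of $\mathrm{e}^{\pm\mathrm{i}t\sqrt{-\gamma\Delta}/\varepsilon}$. The conclusion is that $\Div(\vr_\varepsilon\bfu_\varepsilon\otimes\mathcal{P}_H^\perp\bfu_\varepsilon)\to 0$ in the sense of distributions on the new probability space obtained below.

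\emph{Step 3: Tightness and Jakubowski--Skorokhod.} I would assemble the laws of
\[
\big(\vr_\varepsilon,\,\bfu_\varepsilon,\,\vr_\varepsilon\bfu_\varepsilon,\,r_\varepsilon,\,\mathbf{V}_\varepsilon,\,W_\varepsilon,\,\textstyle\int_0^\cdot\varPhi(\vr_\varepsilon,\vr_\varepsilon\bfu_\varepsilon)\dif W_\varepsilon\big)
\]
on a product of quasi-Polish spaces chosen so that tightness follows from Step 1: $L^\infty(0,T;L^\gamma(\mt))$ with weak-$\ast$ topology for $\vr_\varepsilon$, $L^2(0,T;W^{1,2}(\mt))$ with weak topology for $\bfu_\varepsilon$, $C([0,T];W^{-l,2}(\mt))$ with weak topology (or a suitable quasi-Polish refinement) for $\vr_\varepsilon\bfu_\varepsilon$ and the stochastic integral (using the standard fractional Sobolev--in--time bound on the It\^o integral), and $C([0,T];\mathfrak{U}_0)$ for $W_\varepsilon$. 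The Jakubowski--Skorokhod theorem then gives a new probability space $(\tilde\Omega,\tilde{\mathcal{F}},\tilde{\mathbb P})$ and random variables with the same laws converging $\tilde{\mathbb P}$-a.s. In particular $\tilde\vr_\varepsilon\to 1$ strongly in $L^\infty(0,T;L^\gamma)$, $\tilde\bfu_\varepsilon\rightharpoonup\tilde\bfu$, and the continuity equation at the limit yields $\Div\tilde\bfu=0$.

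\emph{Step 4: Identification of the limit and main obstacle.} The hard part is passing to the limit in the convective term $\Div(\vr_\varepsilon\bfu_\varepsilon\otimes\bfu_\varepsilon)$ when tested against a solenoidal $\bfvarphi\in C^\infty_{\mathrm{div}}(\mt)$. Writing $\bfu_\varepsilon=\mathcal{P}_H\bfu_\varepsilon+\mathcal{P}_H^\perp\bfu_\varepsilon$, the $\mathcal{P}_H\otimes\mathcal{P}_H$ block converges by Aubin--Lions applied to $\mathcal{P}_H(\vr_\varepsilon\bfu_\varepsilon)$ (which enjoys the usual strong compactness in, say, $L^2(0,T;W^{-1,2})$), while the mixed and gradient-gradient blocks vanish in the distributional sense against divergence-free test functions thanks to Step 2. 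The linear structure of $\varPhi$ in $\vr\bfu$ together with the strong convergence of $\vr_\varepsilon$ to $1$ allows the stochastic integral to converge to $\int_0^\cdot\varPsi(\tilde\bfu)\dif\tilde W$ in the spirit of \cite{BrHo}. Finally, using the Levy characterization applied on the new space one identifies $\tilde W$ as a cylindrical Wiener process relative to the filtration generated by $(\tilde\bfu,\tilde W)$, so that the limit object is indeed a weak martingale solution of \eqref{eq:lim} with initial law $\Lambda$. The two-dimensional strengthening in Theorem \ref{thm:main2d} follows from pathwise uniqueness for \eqref{eq:lim} combined with the Gy\"ongy--Krylov lemma for quasi-Polish spaces (Proposition \ref{diagonal}).
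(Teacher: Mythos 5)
Your four-stage plan (modulated energy bounds, acoustic analysis, Jakubowski--Skorokhod compactness, martingale identification) matches the paper's structure, and Steps 1, 3 and 4 are essentially what the paper does, modulo minor technical choices (you suggest Aubin--Lions for $\mathcal P_H(\vr_\varepsilon\bfu_\varepsilon)$; the paper uses the Kolmogorov continuity criterion plus the compact embedding from \cite{on2}, which is equivalent here). The genuine problem is in Step 2.

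Your claim that the forcing terms and the quadratic convective block vanish ``after time averaging due to the rapid $1/\varepsilon$ oscillations of $\mathrm e^{\pm\mathrm i t\sqrt{-\gamma\Delta}/\varepsilon}$'' overlooks the resonance: the product $\mathcal Q(\vr_\varepsilon\bfu_\varepsilon)\otimes\mathcal Q\bfu_\varepsilon$ of two waves oscillating at frequency $\pm\sqrt{-\gamma\Delta}/\varepsilon$ contains a non-oscillating component that survives any time averaging. In the deterministic global method of \cite{LiMa} this resonant part is computed Fourier mode by Fourier mode and shown to be the gradient of a scalar, which is precisely why it is innocuous against solenoidal test functions; that computation is the heart of the argument and is absent from your proposal. (You also conflate this with the ``local method'' of \cite{LiMa2}, but the wave group/Duhamel route you describe is the global method of \cite{LiMa}.) Moreover, pushing a stochastic integral through the oscillating group $\mathcal S(\pm t/\varepsilon)$ and controlling the resulting time averages is technically delicate.

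The paper sidesteps both difficulties by adopting the local method of \cite{LiMa2}: after mollifying by projection onto finitely many Fourier modes (so that mollification commutes with derivatives, $\mathcal P_H$, $\mathcal Q$), one sets $\nabla\tilde\Psi_\varepsilon=\mathcal Q(\tilde\varrho_\varepsilon\tilde\bfu_\varepsilon)$ and applies It\^o's formula to the functional $f=\int_{\mt}\varepsilon\tilde\varphi^\kappa_\varepsilon\nabla\tilde\Psi^\kappa_\varepsilon\cdot\bfphi\dx$ with $\bfphi\in C^\infty_{\rm div}$. Using the acoustic system \eqref{eq:approximation2} and $\divergence\bfphi=0$, the quadratic term $\int_0^t\int_{\mt}\nabla\tilde\Psi^\kappa_\varepsilon\otimes\nabla\tilde\Psi^\kappa_\varepsilon:\nabla\bfphi$ is then shown to equal $\varepsilon$ times uniformly bounded quantities (a boundary term, an $\tilde\bfF_\varepsilon$ term, and a stochastic integral), hence vanishes $\tilde\prst$-a.s. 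No resonance analysis and no manipulation of the wave group are needed. I would recommend replacing your Duhamel argument in Step 2 by this It\^o-formula trick, which is both shorter and better suited to the stochastic setting.
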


\begin{theorem}\label{thm:main2d}
Let $N=2$ and $\bfu_0\in L^2(\mathbb T^2)$. Let $\Lambda_\varepsilon$ be a Borel probability measure on $L^\gamma(\mathbb T^2)\times L^\frac{2\gamma}{\gamma+1}(\mathbb T^2)$ such that for some constant $M>0$ (independent of $\varepsilon$) it holds true that
\begin{equation*}
\Lambda_\varepsilon\bigg\{(\rho,\bfq)\in L^\gamma(\mathbb T^2)\times L^\frac{2\gamma}{\gamma+1}(\mathbb T^2);\, \rho\geq\frac{1}{M},\;\Big|\frac{\rho-1}{\varepsilon}\Big|\leq M,\,(\rho)_{\mt}\leq M,\;\Big|\frac{\bfq-\bfu_0}{\varepsilon}\Big|\leq M\bigg\}=1,
\end{equation*}
%for all $p\in[1,\infty)$,
%\begin{equation*}%\label{initial}
%\int_{L^\gamma_x\times L^\frac{2\gamma}{\gamma+1}_x}\bigg\|\frac{1}{2}\frac{|\bfq|^2}{\rho} \bigg\|_{L^1_x}^p\,\dif \Lambda_\varepsilon (\rho,\bfq)\leq C(p),
%\end{equation*}
%and that the marginal law of $\Lambda_\varepsilon$ corresponding to the second component converges to $\Lambda$ weakly in the sense of measures on $L^\frac{2\gamma}{\gamma+1}(\mt).$
If $\big((\Omega,\mf,(\mf),\prst),\varrho_\varepsilon,\bfu_\varepsilon,W\big)$ is a finite energy weak martingale solution to \eqref{eq:} with the initial law $\Lambda_\varepsilon$, $\varepsilon\in (0,1)$, then
\begin{align*}
\varrho_\varepsilon&\rightarrow1\quad\text{in}\quad L^\infty(0,T;L^\gamma(\mathbb T^2))\quad \prst\text{-a.s.},\\
\bfu_\varepsilon&\rightarrow\bfu\quad\text{in}\quad\big(L^2(0,T;W^{1,2}(\mathbb T^2)),w\big)\quad \prst\text{-a.s.},
\end{align*}
where $\bfu$ is a weak pathwise solution to \eqref{eq:lim} with the initial condition $\bfu_0$.
\end{theorem}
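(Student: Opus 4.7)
The plan is to deduce Theorem \ref{thm:main2d} from Theorem \ref{thm:main} by exploiting the two distinctive features of the two-dimensional setting: pathwise uniqueness for the limit equation (Theorem \ref{thm:inc2d}) and the fact that the entire family $\{(\varrho_\varepsilon,\bfu_\varepsilon)\}$ is defined on one and the same stochastic basis $(\Omega,\mf,(\mf_t),\prst)$ driven by a common cylindrical Wiener process $W$. A Gy\"ongy--Krylov type characterization on quasi-Polish spaces (Proposition \ref{diagonal}) will be used to upgrade the convergence in law provided by Theorem \ref{thm:main} to convergence in probability, and then to a.s.\ convergence of the whole family.

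First I would verify that the hypotheses of Theorem \ref{thm:main} are met by $\Lambda_\varepsilon$. The bounds $|(\varrho-1)/\varepsilon|\leq M$, $\varrho\geq 1/M$ and $|(\bfq-\bfu_0)/\varepsilon|\leq M$ yield uniform control of $\tfrac12|\bfq|^2/\varrho$ in any $L^p$, while writing $\bfq=\bfu_0+\varepsilon\,r_\varepsilon$ with $\|r_\varepsilon\|_\infty\leq M$ shows that the second marginal of $\Lambda_\varepsilon$ converges (strongly in $L^2(\mathbb T^2)$, hence weakly in $L^{2\gamma/(\gamma+1)}$) to the Dirac mass $\delta_{\bfu_0}$. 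Theorem \ref{thm:main} therefore applies with $\Lambda=\delta_{\bfu_0}$ and gives, at the level of laws on the path space
\[
\mathcal X := L^\infty(0,T;L^\gamma(\mathbb T^2))\times \bigl(L^2(0,T;W^{1,2}(\mathbb T^2)),w\bigr),
\]
the convergence $(\varrho_\varepsilon,\bfu_\varepsilon)\to (1,\bfu)$ where $\bfu$ is a weak martingale solution to \eqref{eq:lim} starting from the deterministic datum $\bfu_0$.

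The decisive step is the diagonal argument. For any two subsequences $\varepsilon_n,\eta_n\to 0$, consider the joint laws of
\[
\bigl((\varrho_{\varepsilon_n},\bfu_{\varepsilon_n}),(\varrho_{\eta_n},\bfu_{\eta_n}),W\bigr)
\]
on $\mathcal X\times\mathcal X\times C([0,T];\mathfrak U_0)$. The individual tightness results established in the proof of Theorem \ref{thm:main} (which rely on the uniform energy estimate \eqref{energy} and the acoustic analysis of Section \ref{AW}) give tightness of these joint laws on this quasi-Polish product. An application of the Jakubowski--Skorokhod theorem produces, on a new probability space, a.s.\ convergent representatives, and passage to the limit in the approximate momentum equation is carried out exactly as in Theorem \ref{thm:main}, crucially using the linearity of $\varPhi$ in the momentum to handle the stochastic integral against the only weakly convergent velocity. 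One thus obtains two weak martingale solutions $\widetilde\bfu_1,\widetilde\bfu_2$ of \eqref{eq:lim} driven by the \emph{same} Brownian motion $\widetilde W$ and starting from the \emph{same} deterministic initial datum $\bfu_0$. Pathwise uniqueness for the two-dimensional stochastic incompressible Navier--Stokes system (Theorem \ref{thm:inc2d}) forces $\widetilde\bfu_1=\widetilde\bfu_2$ a.s., and trivially $\widetilde\varrho_1=\widetilde\varrho_2\equiv 1$; hence every accumulation point of the joint laws is supported on the diagonal of $\mathcal X\times\mathcal X$. Proposition \ref{diagonal} then delivers convergence in probability of $(\varrho_\varepsilon,\bfu_\varepsilon)$ on $\mathcal X$, and uniqueness of the limit promotes this to a.s.\ convergence of the entire family, not merely of a subsequence.

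The principal obstacle I foresee is the identification step on the Jakubowski--Skorokhod space. Because $\mathcal X$ is only quasi-Polish (the weak topology on $L^2(0,T;W^{1,2})$ is not metrizable on the whole space), the ordinary Skorokhod representation is unavailable and one must prove, within the quasi-Polish framework, that the new random variables produced from the two subsequences share the \emph{same} Brownian motion $\widetilde W$, and that the It\^o stochastic integrals from \eqref{eq2} pass to the correct limit under the weak convergence of $\bfu_\varepsilon$. This identification, together with the acoustic-wave estimates needed to discard the gradient part of the momentum, is the technical heart of the argument; once it is in place, pathwise uniqueness for \eqref{eq:lim} closes the proof cleanly.
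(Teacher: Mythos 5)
Your overall strategy is exactly the one used in the paper: invoke the Gy\"ongy--Krylov characterization generalized to quasi-Polish spaces (Proposition \ref{diagonal}), prove tightness of the joint laws of pairs of subsequences together with $W$, apply the Jakubowski--Skorokhod theorem, rerun the identification argument of Subsection \ref{subsec:ident} on each factor to obtain two martingale solutions of \eqref{eq:lim} driven by the same Wiener process and starting from the same deterministic initial value $\bfu_0$, and then invoke pathwise uniqueness for the two-dimensional incompressible system (Theorem \ref{thm:inc2d}) to conclude that the joint limit law sits on the diagonal. The structure and the key ingredients coincide.

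Two points deserve comment. First, you take the path space for the diagonal argument to be $\mathcal X\times\mathcal X\times C([0,T];\mathfrak U_0)$ with $\mathcal X = L^\infty(0,T;L^\gamma)\times\bigl(L^2(0,T;W^{1,2}),w\bigr)$. The paper keeps a third coordinate $\mathcal{X}_{\varrho\bu}=C_w([0,T];L^{2\gamma/(\gamma+1)})$ carrying $\mathcal P_H(\varrho_\varepsilon\bu_\varepsilon)$. This is not cosmetic: the time-H\"older estimate on $\mathcal P_H(\varrho_\varepsilon\bu_\varepsilon)$ established in Proposition \ref{rhoutight1} is what yields tightness in a topology with continuity in time, and the identification of the It\^o integral via the functionals $M,N,N_k$ is carried out against a process that is $C_w$ in time. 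Dropping the momentum coordinate from the joint path space would force you to reconstruct this compactness and identification from the velocity alone, which the weak topology on $L^2_tW^{1,2}_x$ does not supply; so you should retain the $\mathcal{X}_{\varrho\bu}$ coordinate (and, similarly, use $C_w([0,T];L^\gamma)$ rather than $L^\infty_tL^\gamma_x$ for the density coordinate; the $L^\infty$-norm convergence is recovered a posteriori from \eqref{conv:rho}). Second, your final sentence --- ``uniqueness of the limit promotes this to a.s.\ convergence of the entire family, not merely of a subsequence'' --- is not correct. Convergence in probability does not imply a.s.\ convergence, even when the limit is unique or deterministic; one only obtains a subsequence converging a.s.\ (which is exactly what Proposition \ref{diagonal} is formulated to give, and what the paper means by ``without loss of generality, we assume the convergence is almost sure''). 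Simply pass to an a.s.\ convergent subsequence at this stage; the statement of the theorem is to be read in that sense.
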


Here and in the sequel, the letter $C$ denotes a constant that might change from one line to another and that is independent of $\varepsilon$.

%
%\begin{remark}
%Since it is well known (and corresponds to the deterministic setting), that the pathwise uniqueness holds true in space dimension $N=2$ for \eqref{eq:lim}, it is natural to ask whether in that case the convergence of the velocities in Theorem \ref{thm:main} can be strengthen, namely, if one can show that
%$$\bfu_\varepsilon\rightharpoonup\bfu\quad\text{in }\quad L^2(0,T;W^{1,2}(\mt))\quad\text{a.s.}$$
%using a Yamada-Watanabe-type method. However, our compactness argument is rather complex and involves, among others, also certain quantities defined in terms of the underlying acoustic wave equation. Consequently, such a Yamada-Watanabe argument would require, roughly speaking, to pass to the limit (as well as uniqueness for the corresponding limit problem) also in the gradient part of the original system \eqref{eq:}, which is not possible.
%\end{remark}

\section{Proof of Theorem \ref{thm:main}}
\label{sec:main}

This section is devoted to the study the limit $\varepsilon\rightarrow0$ in the system \eqref{eq:}.
%\begin{subequations}\label{eq:approx2}
% \begin{align}
%  \dif \varrho+\divergence(\varrho\bu)\dif t&=0,\label{eq:approx21}\\
%  \dif(\varrho\bu)+
%\big[\divergence(\varrho\bu\otimes\bu)-\nu\Delta\bu-(\lambda+\nu)\nabla\divergence\bfu
%+\frac{1}{\varepsilon^2}\nabla \varrho^\gamma\big]\dif t&=\varPhi(\varrho,\varrho\bu) \,\dif W,\label{eq:approx22}
%%  \varrho(0)=\varrho_0,\qquad (\varrho\bu (0)&=(\varrho\bu)_0\label{eq:approx3}
% \end{align}
%%\end{equation}
%\end{subequations}
%with the initial law $\Lambda_\varepsilon$, where \color{red}
%\begin{align*}
%\Lambda_\varepsilon\set{\varrho(0)=1+\varepsilon\varrho_0,\ \varrho_0 \in L^\infty(\Omega\times\mt), \bfu(0) \in L^{2}
%(\Omega \times \mt) } = 1.
%\end{align*}
%\color{black}
To this end, we recall that it was proved in \cite{BrHo} that for every $\varepsilon\in (0,1)$ there exists
$$\big((\Omega^\varepsilon,\mf^\varepsilon,(\mf^\varepsilon_t),\prst^\varepsilon),\varrho_\varepsilon,\bfu_\varepsilon, W_\varepsilon\big)$$
which is a weak martingale solution in the sense of Definition \ref{def:sol}. It was shown in \cite{jakubow} that it is enough to consider only one probability space, namely,
$$(\Omega^\varepsilon,\mf^\varepsilon,\prst^\varepsilon)=\big([0,1],\mathcal{B}([0,1]),\mathcal{L}\big)\qquad\forall \varepsilon\in (0,1)$$
where $\mathcal{L}$ denotes the Lebesgue measure on $[0,1]$.
Moreover, we can assume without loss of generality that there exists one common Wiener process $W$ for all $\varepsilon$. Indeed, one could perform the existence proof from \cite{BrHo} for all the parameters from any chosen subsequence $\varepsilon_n$ at once. The reader is referred to \cite{BrHo} where a similar issue is discussed at the beginning of Section 5.

\subsection{Uniform bounds}
\label{UB}

We start with an a priori estimate which is a modification of the energy estimate \eqref{energy} established in \cite{BrHo}.

\begin{proposition}\label{prop:apriori}
Let $p\in[1,\infty).$ Then the following estimate holds true uniformly in $\varepsilon$
\begin{align}\label{apriori}
\begin{aligned}
&\stred\bigg[\sup_{0\leq t\leq T}\int_ {\mt} \Big(\frac{1}{2} \varrho_\varepsilon(t)\big| \bfu_\varepsilon(t)\big|^2+\frac{1}{\varepsilon^2(\gamma-1)}\big(\varrho^\gamma_\varepsilon(t)-1-\gamma(\varrho_\varepsilon(t)-1)\big)\Big)\dx\bigg]^p\\
&\quad+\stred\bigg[\int_0^{T}\int_ {\mt}\nu |\nabla \bfu_\varepsilon|^2+(\lambda+\nu)|\divergence\bfu_\varepsilon|^2\bigg]^p\\
& \leq C_p\,\stred\bigg[\int_{\mt} \Big(\frac{1}{2} \varrho_{\varepsilon}(0)| \bfu_{\varepsilon}(0)|^2+\frac{1}{\varepsilon^2(\gamma-1)}\big(\varrho_{\varepsilon}^\gamma(0)-1-\gamma(\varrho_{\varepsilon}(0)-1)\big)\Big)\dif x+1\bigg]^p\\
&\leq C_p.
\end{aligned}
\end{align}
\end{proposition}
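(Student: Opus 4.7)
The plan is to deduce \eqref{apriori} from the energy inequality \eqref{energy} of Theorem \ref{thm:exist} by choosing the free constant in the pressure potential appropriately, and then to use the assumptions on $\Lambda_\varepsilon$ to bound the initial energy term on the right-hand side uniformly in $\varepsilon$.

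First I would invoke Remark \ref{rem:new} with $\overline{\varrho} = 1$, so that
\[
H(\varrho) = \frac{1}{\gamma-1}\bigl(\varrho^\gamma - \gamma(\varrho - 1) - 1\bigr).
\]
With this normalization, the integrand $\frac{|\varrho_\varepsilon\bfu_\varepsilon|^2}{2\varrho_\varepsilon}+\frac{1}{\varepsilon^2}H(\varrho_\varepsilon)$ in \eqref{energy} coincides exactly with the integrand appearing in the first line of \eqref{apriori}. The same dissipation term and the same initial-data expression appear on both sides, so the first inequality of \eqref{apriori} follows immediately from Theorem \ref{thm:exist}.

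For the second inequality I would control the initial data pointwise in $\omega$. The kinetic part $\int_\mt \frac{|(\varrho_\varepsilon\bfu_\varepsilon)(0)|^2}{2\varrho_\varepsilon(0)}\,\dx = \int_\mt \frac{|\bfq_\varepsilon(0)|^2}{2\varrho_\varepsilon(0)}\,\dx$ has its $p$-th moment bounded by $C(p)$ by the second hypothesis on $\Lambda_\varepsilon$ in Theorem \ref{thm:main}. For the potential part, the support condition $|\varrho_\varepsilon(0) - 1|/\varepsilon \leq M$ together with $\varrho_\varepsilon(0) \geq 1/M$ confines $\varrho_\varepsilon(0)$, for $\varepsilon \in (0,1)$, to the fixed compact set $K = [1/M,\,1+M]\subset(0,\infty)$, independently of $\varepsilon$. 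On $K$ the smooth function $\varrho \mapsto \varrho^\gamma - \gamma(\varrho-1) - 1$ and its first derivative both vanish at $\varrho = 1$, so a standard Taylor estimate yields $\varrho^\gamma - \gamma(\varrho - 1) - 1 \leq C_K(\varrho - 1)^2$ on $K$. Consequently
\[
\frac{1}{\varepsilon^2} H(\varrho_\varepsilon(0)) \leq \frac{C_K}{\gamma - 1}\cdot\frac{(\varrho_\varepsilon(0) - 1)^2}{\varepsilon^2} \leq \frac{C_K M^2}{\gamma - 1} \qquad \prst\text{-a.s.},
\]
which is a deterministic $\varepsilon$-independent bound. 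Taking the expectation of the $p$-th power of the sum of these two bounds and combining with the first inequality of \eqref{apriori} gives the second inequality.

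The main conceptual point — rather than a genuine obstacle — is that the scaling $|\varrho_\varepsilon(0) - 1| \lesssim \varepsilon$ built into the hypothesis on $\Lambda_\varepsilon$ is precisely what is needed to offset the $\varepsilon^{-2}$ prefactor in the modulated energy, owing to the quadratic vanishing of the normalized potential $H$ at $\varrho = 1$. The lower bound $\varrho \geq 1/M$ is used exclusively to keep $\varrho_\varepsilon(0)$ in a compact subset of $(0,\infty)$ so that the Taylor constant $C_K$ does not depend on $\varepsilon$.
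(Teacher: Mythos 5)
Your proof is correct and follows essentially the same route as the paper: choose the pressure potential normalized at $\overline{\varrho}=1$ so that the energy inequality \eqref{energy} immediately gives the first inequality, and then use the constraint $|\varrho_\varepsilon(0)-1|\le M\varepsilon$ together with a second-order Taylor expansion of $\varrho\mapsto\varrho^\gamma-\gamma(\varrho-1)-1$ at $\varrho=1$ to cancel the $\varepsilon^{-2}$ prefactor on the initial potential energy. You give somewhat more detail than the paper on why the Taylor constant is uniform (the compact set $K=[1/M,1+M]$), which is a worthwhile clarification. The one point the paper flags that you omit is that passing from \eqref{energy} to the first inequality also uses $(\varrho_\varepsilon(0))_{\mt}\le M$: the $\varepsilon$-independence of the Gronwall constant $c(p,T)$ in \eqref{energy} relies on estimating $\int_\mt\varrho_\varepsilon^\gamma\,\dx$ by $\frac{1}{\varepsilon^2}\int_\mt H(\varrho_\varepsilon)\,\dx$ plus a multiple of $\int_\mt\varrho_\varepsilon\,\dx=\int_\mt\varrho_\varepsilon(0)\,\dx$, which must be bounded uniformly. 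This is minor but worth a sentence.
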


\begin{proof}
The first inequality follows directly from \eqref{energy} choosing
\[
H(\varrho) =  \frac{1}{\gamma - 1} \left( \varrho^{\gamma} - \gamma \overline{\varrho}^{\gamma - 1}(\varrho - 1) - 1\right)
\]
 and using $\int_{\mt}\varrho(0)\leq M$.
%and the mass conservation
%\begin{align*}
%\int_{\mt}\varrho_\varepsilon(t)\dx=\int_{\mt}\varrho_\varepsilon(0)\dx
%\end{align*}
%which is a consequence of equation \eqref{eq1}.
Next, we observe that due to the Taylor theorem and our assumptions upon $\Lambda_\varepsilon$, it holds
\begin{align*}
\E\int_{\mt}\big(\varrho_{\varepsilon}^\gamma(0)-1-\gamma(\varrho_{\varepsilon}(0)-1)\big)\dif x&\leq C\varepsilon^2
\end{align*}
and hence the second estimate follows (independently of $\varepsilon$).\end{proof}
\iffalse
Let us now take supremum in time, $p$-th power and expectation. For the stochastic integral $J_6$ we make use of the Burkholder-Davis-Gundy inequality and the assumption \eqref{growth1} to obtain, for all $t\in[0,T]$, for all $\kappa>0$
\begin{equation*}
\begin{split}
\E\sup_{0\leq s\leq t}&|J_8|^{p}\leq C\,\E\bigg[\int_0^{t}\sum_{k\geq1}\bigg(\int_ {\mt} \bfu_\varepsilon\cdot \bfg_k\big(\varrho_\varepsilon,\varrho_\varepsilon\bfu_\varepsilon\big)\dx\bigg)^2\dif s\bigg]^{\frac{p}{2}}\\
&\hspace{-.5cm}\leq C\,\stred\bigg[\int_0^{t}\int_{\mt}\varrho_\varepsilon|\bu_\varepsilon|^2\,\dif x\int_{\mt}\sum_{k\geq 1}\varrho_\varepsilon^{-1}|\bfg_k(\varrho_\varepsilon,\varrho_\varepsilon\bu_\varepsilon)|^2\,\dif x\,\dif s\bigg]^\frac{p}{2}\\
&\hspace{-.5cm}\leq \kappa\,\stred\bigg[\sup_{0\leq s\leq t}\int_{\mt}\varrho_\varepsilon|\bu_\varepsilon|^2\,\dif x\bigg]^p+C(\kappa)\,\stred\int_0^{t}\bigg(\int_{\mt}\big(\varrho_\varepsilon+\varrho_\varepsilon|\bu_\varepsilon|^2+\varrho_\varepsilon^\gamma\big)\,\dif x\bigg)^p\,\dif s.
\end{split}
\end{equation*}
All the other terms can be handled as before. After choosing $\kappa$ small enough and applying Gronwall's Lemma
we end up with the a priori estimate \eqref{apriori} which is independent of $\varepsilon$.\fi

\begin{corollary}\label{prop:new}
We have the following uniform bounds, for all $p\in[1,\infty)$,
\begin{align}
\nabla \bfu_\varepsilon&\in L^{p}(\Omega;L^2(0,T; L^2 ( \mt))),\label{apv-}\\
\sqrt{ \varrho_\varepsilon} \bfu_\varepsilon&\in L^{p}(\Omega;L^\infty(0,T;L^2( \mt))),\label{aprhov}\\
\bfu_\varepsilon &\in L^{p}(\Omega;L^2(0,T; W^{1,2} ( \mt))).\label{apv}
\end{align}
\end{corollary}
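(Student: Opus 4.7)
The first two bounds are essentially immediate from \eqref{apriori}. Specifically, \eqref{apv-} follows from the dissipation term (the second summand on the left-hand side, since $\nu>0$), while \eqref{aprhov} follows from the kinetic energy term once we notice that the pressure potential $H(\varrho) = \frac{1}{\gamma-1}(\varrho^\gamma - 1 - \gamma(\varrho-1))$ is non-negative on $[0,\infty)$ (by convexity of $\varrho\mapsto\varrho^\gamma$ at the base point $\varrho=1$). The only delicate point is \eqref{apv}: one has to extract an $L^2$-bound on $\bfu_\varepsilon$ itself from a bound on $\sqrt{\varrho_\varepsilon}\bfu_\varepsilon$, while $\varrho_\varepsilon$ could in principle vanish on sets of positive measure, so a naive pointwise division is not allowed.

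My plan is to combine the Poincar\'e inequality on the torus
\[
\|\bfu_\varepsilon\|_{L^2}^2 \leq C\bigl(\|\nabla\bfu_\varepsilon\|_{L^2}^2 + |(\bfu_\varepsilon)_\mt|^2\bigr)
\]
with a separate estimate of the spatial mean $(\bfu_\varepsilon)_\mt$. Integrating \eqref{eq1} over $\mt$ yields conservation of mass, so $\bar\varrho_\varepsilon := (\varrho_\varepsilon)_\mt$ is $\prst$-a.s.\ constant in $t$; by the hypothesis $\varrho_\varepsilon(0)\geq 1/M$ we get $\bar\varrho_\varepsilon \geq 1/M$, uniformly in $\varepsilon$ and $\omega$. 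Using $\int_\mt(\bar\varrho_\varepsilon - \varrho_\varepsilon)\dx = 0$, I split
\[
|\mt|\,\bar\varrho_\varepsilon\,(\bfu_\varepsilon)_\mt = \int_\mt \varrho_\varepsilon\bfu_\varepsilon\dx + \int_\mt(\bar\varrho_\varepsilon - \varrho_\varepsilon)\bigl(\bfu_\varepsilon - (\bfu_\varepsilon)_\mt\bigr)\dx.
\]
The first integral is controlled by Cauchy--Schwarz and mass conservation: $|\int\varrho_\varepsilon\bfu_\varepsilon|\leq \|\sqrt{\varrho_\varepsilon}\|_{L^2}\|\sqrt{\varrho_\varepsilon}\bfu_\varepsilon\|_{L^2}\leq C\|\sqrt{\varrho_\varepsilon}\bfu_\varepsilon\|_{L^2}$. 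For the second, I apply H\"older with a pair of conjugate exponents $q,q'$ such that $W^{1,2}(\mt)\hookrightarrow L^q(\mt)$ and $1\leq q'\leq \gamma$ (in dimension three one takes $q=6$, $q'=6/5$, and $\gamma>3/2$ is exactly what is needed; in dimension two any $q<\infty$ works since $\gamma>1$). Poincar\'e--Sobolev gives $\|\bfu_\varepsilon-(\bfu_\varepsilon)_\mt\|_{L^q}\leq C\|\nabla\bfu_\varepsilon\|_{L^2}$, whereas $\|\varrho_\varepsilon - \bar\varrho_\varepsilon\|_{L^{q'}}$ is uniformly bounded via the elementary inequality $\varrho^\gamma \leq C\bigl(\varrho^\gamma - 1 - \gamma(\varrho - 1)\bigr) + C$ (valid for $\gamma>1$ by Young's inequality applied to $\gamma\varrho$) together with \eqref{apriori}, which controls $\E[\sup_t\int_\mt\varrho_\varepsilon^\gamma\dx]^p$ uniformly in $\varepsilon$. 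Combining, and using $\bar\varrho_\varepsilon\geq 1/M$, I obtain $|(\bfu_\varepsilon)_\mt|\leq C(\|\sqrt{\varrho_\varepsilon}\bfu_\varepsilon\|_{L^2} + \|\nabla\bfu_\varepsilon\|_{L^2})$.

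Substituting back into Poincar\'e yields the pointwise-in-$(\omega,t)$ bound
\[
\|\bfu_\varepsilon\|_{L^2}^2 \leq C\bigl(\|\nabla\bfu_\varepsilon\|_{L^2}^2 + \|\sqrt{\varrho_\varepsilon}\bfu_\varepsilon\|_{L^2}^2\bigr);
\]
integrating in $t$, raising to the $p/2$-th power, taking expectation, and invoking \eqref{apriori} (which delivers all moments of both the gradient term and the kinetic-energy term) then closes \eqref{apv}, and together with \eqref{apv-} we conclude that $\bfu_\varepsilon$ is uniformly bounded in $L^p(\Omega; L^2(0,T; W^{1,2}(\mt)))$ for every $p\in[1,\infty)$. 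The only real obstacle is thus the bound on the mean $(\bfu_\varepsilon)_\mt$; the decomposition above overcomes it by playing off mass conservation against the near-incompressibility of the initial density encoded in the assumption on $\Lambda_\varepsilon$.
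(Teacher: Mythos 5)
Your proof takes exactly the approach the paper intends but leaves almost entirely unstated: the paper simply remarks that \eqref{apv-} and \eqref{aprhov} follow directly from Proposition~\ref{prop:apriori} (the latter because the pressure potential is non-negative, as you observe), and for \eqref{apv} it points only to the additional ingredient
\[
\int_{\mt}\varrho_\varepsilon(t)\dx=\int_{\mt}\varrho_\varepsilon(0)\dx \geq \frac{1}{M}|\mt|,
\]
a consequence of mass conservation and the hypothesis on $\Lambda_\varepsilon$. The generalized Poincar\'e argument that this positive lower bound on the total mass enables is precisely what you spell out: split $\bfu_\varepsilon$ into mean and mean-free parts, control the mean-free part by $\|\nabla\bfu_\varepsilon\|_{L^2}$, and recover the mean from $\int\varrho_\varepsilon\bfu_\varepsilon$. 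Your decomposition (subtracting $\bar\varrho_\varepsilon$ so that both factors in the error integral have zero mean) is equivalent to the standard one, the exponent bookkeeping $W^{1,2}\hookrightarrow L^q$, $L^\gamma\hookrightarrow L^{q'}$ is correct, and the coercivity $\varrho^\gamma\leq C\,P(\varrho)+C$ is precisely the content of Lemma~\ref{lem1}.

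One small imprecision: the constant in the display you call a ``pointwise-in-$(\omega,t)$ bound,''
\[
\|\bfu_\varepsilon\|_{L^2}^2 \leq C\bigl(\|\nabla\bfu_\varepsilon\|_{L^2}^2 + \|\sqrt{\varrho_\varepsilon}\bfu_\varepsilon\|_{L^2}^2\bigr),
\]
is not deterministic: it secretly carries a factor $\|\varrho_\varepsilon-\bar\varrho_\varepsilon\|_{L^{q'}}^2$ (equivalently, $\|\varrho_\varepsilon\|_{L^\gamma}^2$), which \eqref{apriori} controls only in expectation, not $\omega$-uniformly. The honest estimate is
\[
\|\bfu_\varepsilon\|_{L^2}^2 \leq C\,\|\sqrt{\varrho_\varepsilon}\bfu_\varepsilon\|_{L^2}^2 + C\bigl(1+\|\varrho_\varepsilon\|_{L^\gamma}^2\bigr)\|\nabla\bfu_\varepsilon\|_{L^2}^2.
\]
This is not a real obstacle: integrate in $t$, raise to the $p$-th power, take expectation, and split the mixed term with the Cauchy--Schwarz inequality in $\omega$, using that \eqref{apriori} provides \emph{all} moments of both $\sup_t\|\varrho_\varepsilon\|_{L^\gamma}$ and $\int_0^T\|\nabla\bfu_\varepsilon\|_{L^2}^2\dt$. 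With that adjustment the argument closes and matches the paper's intent.
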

\begin{proof}
The estimates \eqref{apv-} and \eqref{aprhov} follow immediately from Proposition
\ref{prop:apriori}.
For \eqref{apv} we use in addition
\begin{align*}
\int_{\mt}\varrho_\varepsilon(t)\dx=\int_{\mt}\varrho_\varepsilon(0)\dx \geq \frac{1}{M} |\mt|
\end{align*}
which is a consequence of \eqref{eq1}
\end{proof}

Let us now introduce the essential and residual component of any function $h$:
\begin{align*}
h &=h_{\text{ess}}+h_{\text{res}},\\
h_{\text{ess}}&=\chi(\varrho_\varepsilon )h,\; \chi\in C_c^\infty(0, \infty),\; 0 \leq  \chi\leq  1,\; \chi = 1 \text{ on an open interval containing }1 ,\\
h_{\text{res}}&=(1-\chi(\varrho_\varepsilon))h.
\end{align*}
The following lemma will be useful.

\begin{lemma}\label{lem1}
Let $P(\rho):=\rho^\gamma-1-\gamma(\rho-1),$ with $\rho\in[0,\infty).$ Then there exist constants $C_1,\,C_2,\,C_3,\,C_4>0$ such that
\begin{itemize}
\item[$(i)$] $C_1|\rho-1|^2\leq P(\rho)\leq C_2|\rho-1|^2$ if $\rho\in\supp\chi$,
\item[$(ii)$] $P(\rho)\geq C_4$ if $\rho\notin \supp\chi$,
\item[$(iii)$] $P(\rho)\geq C_3\rho^\gamma$ if $\rho\notin \supp\chi$.

\end{itemize}

\end{lemma}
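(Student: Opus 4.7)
The plan is to reduce everything to elementary calculus at the minimum of $P$. A direct computation gives $P(1)=0$, $P'(1)=\gamma-\gamma=0$, and
\[
P''(\rho)=\gamma(\gamma-1)\rho^{\gamma-2}>0\quad\text{on }(0,\infty),
\]
since the standing assumption $\gamma>N/2\geq 1$ ensures $\gamma>1$. Hence $P$ is strictly convex on $(0,\infty)$ with a strict global minimum at $\rho=1$; in particular $P(\rho)>0$ for every $\rho\in[0,\infty)\setminus\{1\}$, and $P(0)=\gamma-1>0$.

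For (i) I would apply Taylor's theorem around $\rho=1$ to write $P(\rho)=\tfrac{1}{2}P''(\xi)(\rho-1)^2$ for some $\xi$ between $1$ and $\rho$. Since $\chi\in C_c^\infty(0,\infty)$, the set $\supp\chi$ is compact in $(0,\infty)$, so the continuous function $P''$ attains there a positive minimum and a positive maximum. Taking $C_1,C_2$ to be half of those extrema yields (i) at once.

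For (ii) and (iii) I would exploit that $\chi\equiv 1$ on an open interval containing $1$ and has compact support in $(0,\infty)$, so that one can fix $0<a<1<b<\infty$ with
\[
\bigl\{\rho\in[0,\infty):\rho\notin\supp\chi\bigr\}\subset[0,a]\cup[b,\infty).
\]
On the compact set $[0,a]$ the continuous function $P$ is strictly positive (since $1\notin[0,a]$) and therefore bounded below by some $c>0$, while $\rho^\gamma\leq a^\gamma$; this gives (ii) and (iii) on the bounded part. On $[b,\infty)$ I would use the algebraic identity
\[
\frac{P(\rho)}{\rho^\gamma}=1-\gamma\,\rho^{1-\gamma}+(\gamma-1)\,\rho^{-\gamma}\longrightarrow 1\quad\text{as }\rho\to\infty,
\]
which, combined with continuity and strict positivity of $P/\rho^\gamma$ on any compact interval $[b,R]$, produces a uniform positive lower bound of $P(\rho)/\rho^\gamma$ on $[b,\infty)$; together with $\rho^\gamma\geq b^\gamma>0$ this yields (ii), and the inequality is already (iii).

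There is no serious obstacle: the argument is purely a matter of a Taylor expansion at the minimum plus a case split into the bounded and unbounded components of the complement of $\supp\chi$. The only fact to keep in mind is that both $\rho^{1-\gamma}\to 0$ and $\rho^{-\gamma}\to 0$ at infinity require $\gamma>1$, precisely the condition encoded in $\gamma>N/2$.
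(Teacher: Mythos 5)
Your argument is correct and matches the paper's proof in structure: Taylor's theorem at $\rho=1$ for $(i)$, strict convexity of $P$ with minimum at $\rho=1$ for $(ii)$, and elementary analysis of $P/\rho^\gamma$ on the complement of $\supp\chi$ (split into the bounded part and the part beyond $b>1$) for $(iii)$. The only cosmetic difference is in bounding $P/\rho^\gamma$ from below on $[b,\infty)$: the paper observes that this ratio is increasing there (indeed $\bigl(P(\rho)/\rho^\gamma\bigr)'=\gamma(\gamma-1)(\rho-1)\rho^{-\gamma-1}\ge 0$ for $\rho\ge1$), whereas you combine the limit $P/\rho^\gamma\to1$ at infinity with compactness of $[b,R]$; both yield the same uniform lower bound.
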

\begin{proof}
The first statement follows immediately from the Taylor theorem. The second one is a consequence of the fact that $P$ is strictly convex and attains its minimum at $\rho=1$. If $\rho\notin\supp\chi$ and $\rho\in [0,1)$ then the third statement is a consequence of the second one. Finally, we observe that the function
$\frac{P(\rho)}{\rho^\gamma}$
is increasing for large $\rho\in[1,\infty)$ and its value at $\rho=1$ is zero. This implies the remaining part of $(iii)$ and the proof is complete.
\end{proof}

Accordingly, using Lemma \ref{lem1} and \eqref{apriori}, we obtain the following uniform bounds, for all $p\in[1,\infty)$
\begin{align*}
\Big[\frac{\varrho_\varepsilon-1}{\varepsilon}\Big]_{\text{ess}}&\in L^p(\Omega;L^\infty(0,T;L^2(\mt))),\\
\Big[\frac{\varrho_\varepsilon-1}{\varepsilon}\Big]_{\text{res}}&\in L^p(\Omega;L^\infty(0,T;L^\gamma(\mt))).
\end{align*}
Therefore, setting $\varphi_\varepsilon:=\frac{1}{\varepsilon}(\varrho_\varepsilon-1)$, we deduce that uniformly in $\varepsilon$
\begin{align}
\varphi_\varepsilon&\in L^{p}(\Omega;L^\infty(0,T;L^{\min(\gamma,2)}( \mt))).\label{apphi}
\end{align}

As the next step, we want to show that
\begin{align}\label{conv:rho}
\varrho_\varepsilon\rightarrow1\quad\text{in}\quad L^p(\Omega;L^\infty(0,T;L^\gamma(\mt))),
\end{align}
which in particular leads to
\begin{align}
\varrho_\varepsilon&\in L^{p}(\Omega;L^\infty(0,T;L^\gamma(\mt))).\label{aprho}
\end{align}
Then, combining \eqref{aprhov}, \eqref{apv} and \eqref{aprho}, respectively, we deduce the uniform bounds, for all $p\in[1,\infty)$,
\begin{align}
  \varrho_\varepsilon\bfu_\varepsilon&\in L^{p}(\Omega;L^\infty(0,T;L^\frac{2\gamma}{\gamma+1}( \mt))),\label{estrhou2}\\
  \varrho_\varepsilon\bfu_\varepsilon\otimes\bfu_\varepsilon&\in L^{p}(\Omega;L^2(0,T;L^\frac{6\gamma}{4\gamma+3}( \mt))).\label{estrhouu}
\end{align}
%For $\varepsilon$ small enough we can assume that $$(\varrho_\varepsilon(t))_{\mt}=(\varrho_{0,\varepsilon}(t))_{\mt}=1+\varepsilon (\varrho_0)_{\mt}\in[1,\tfrac{3}{2}].$$

Let us now verify \eqref{conv:rho}. Since for all $\delta>0$ there exists $C_\delta>0$ such that
\begin{align*}
\rho^\gamma-1-\gamma(\rho-1)\geq C_\delta |\rho-1|^\gamma
\end{align*}
if $|\rho-1|\geq\delta$ and $\rho\geq0$, we obtain by \eqref{apriori}
\begin{align*}
\stred &\bigg[\sup_{0\leq t\leq T}\int_{\mt} |\varrho_\varepsilon-1|^\gamma\dx\bigg]^p=\stred \bigg[\sup_{0\leq t\leq T}\int_{\mt} \ind_{\{|\varrho_\varepsilon-1|\geq \delta\}} |\varrho_\varepsilon-1|^\gamma\dx\bigg]^p\\
&\qquad+\stred \bigg[\sup_{0\leq t\leq T}\int_{\mt} \ind_{\{|\varrho_\varepsilon-1|< \delta\}} |\varrho_\varepsilon-1|^\gamma\dx\bigg]^p\\
&\leq\,C_\delta\,\stred \bigg[\sup_{0\leq t\leq T}\int_{\mt} \big(\varrho_\varepsilon^\gamma-1-\gamma(\varrho_\varepsilon-1)\big)\dx\bigg]^p+C\delta^{\gamma p}\leq C_\delta\varepsilon^{2p}+C\delta^{\gamma p}.
\end{align*}
Letting first $\varepsilon\rightarrow0$ and then $\delta\rightarrow0$ yields
the claim.\\
%\rmk{is this still needed?}\color{red} We remark that \eqref{apriori} yields for $R>1$ the uniform bound
%\begin{align}\label{eq:23}
%\varphi_\varepsilon\ind_{\{\varrho_\varepsilon\leq R\}}\in %L^2(\Omega;L^2(0,T;L^2(\mt)))
%\end{align}
%using the inequality for all $\rho\leq R$
%\begin{align*}
%&\rho^\gamma+\gamma-1-\gamma(\rho-1)\geq\nu|\rho-1|^2
%\end{align*}
%for some $\nu=\nu(R,\gamma)>0$.
%\color{black}

%As we have (after passing to a subsequence)
%\begin{align}\label{conv:u}
%\bfu_\varepsilon\rightharpoonup\bfu\quad\text{in}\quad L^p(\Omega;L^2(0,T;W^{1,2}(\mt))),
%\end{align}
%due to \eqref{apv}, hence \eqref{conv:rho} implies
%\begin{align}\label{conv:rhou}
%\varrho_\varepsilon\bfu_\varepsilon \rightharpoonup\bfu\quad\text{in}\quad L^p(\Omega;L^2(0,T;L^\frac{2\gamma}{\gamma+1}(\mt))).
%\end{align}
%If we pass to the limit in the continuity equation, we see that $\divergence\bfu=0$, so
%\begin{align}\label{conv:divu}
%\divergence\bfu_\varepsilon\rightharpoonup0\quad\text{in}\quad L^p(\Omega;L^2(0,T;L^2(\mt))).
%\end{align}

\subsection{Acoustic equation}
\label{AW}

In order to proceed we need the Helmholtz projection $\mathcal P_H$ which projects $L^2(\mt)$ onto divergence free vector fields
\begin{align*}
L^2_{\divergence}(\mt):=\overline{C^\infty_{\divergence}(\mt)}^{\|\cdot\|_2}.
\end{align*}
Moreover, we set $\mathcal Q=\mathrm{Id}-\mathcal P_H$. Recall that $\mathcal P_H$ can be easily defined
in terms of the Fourier coefficients $a_k$ (cf. (\ref{trigo}), in particular it can be shown that both $\mathcal P_H$ and $\mathcal Q$ are
continuous in all $W^{l,q}(\mt)$-spaces, $l\in\mr$, $q\in(1,\infty)$.
%, we gain
%\begin{align}\label{conv:Pu}
%\mathcal P\bfu_\varepsilon &\rightharpoonup\bfu\quad\text{in}\quad L^p(\Omega;L^2(0,T;W^{1,2}(\mt))),\\
%\label{conv:Prhou}
%\mathcal P(\varrho_\varepsilon\bfu_\varepsilon) &\rightharpoonup\bfu\quad\text{in}\quad L^p(\Omega;L^2(0,T;L^\frac{2\gamma}{\gamma+1}(\mt))).
%\end{align}

Let us now project \eqref{eq2} onto the space of gradient vector fields. Then \eqref{eq:} rewrites as (using $\mathcal Q\nabla f=\nabla f$)
\begin{subequations}\label{eq:approximation2}
 \begin{align}
  \varepsilon\,\dif \varphi_\varepsilon+\divergence\mathcal Q(\varrho_\varepsilon\bu_\varepsilon)\dif t&=0,\label{eq:approximation21}\\
  \varepsilon\,\dif\mathcal Q(\varrho_\varepsilon\bu_\varepsilon)+
\gamma \nabla\varphi_\varepsilon\dif t&=\varepsilon\bfF_\varepsilon\,\dif t+\varepsilon\mathcal Q\varPhi(\varrho_\varepsilon,\varrho_\varepsilon\bu_\varepsilon) \,\dif W,\label{eq:approximation22}\\
\bfF_\varepsilon= \nu \Delta \mathcal Q \bfu_\varepsilon + (\lambda+\nu)\nabla\divergence\bfu_\varepsilon-\mathcal Q[\divergence(\varrho_\varepsilon\bfu_\varepsilon\otimes\bfu_\varepsilon)]-&\frac{1}{\varepsilon^2}\nabla[\varrho_\varepsilon^\gamma
-1-\gamma(\varrho_\varepsilon-1)]\nonumber.
%  \varrho(0)=\varrho_0,\qquad (\varrho\bu (0)&=(\varrho\bu)_0\label{eq:approx3}
 \end{align}
%\end{equation}
\end{subequations}
The system (\ref{eq:approximation2}) may be viewed as a stochastic version of Lighthill's acoustic analogy \cite{Light1,Light2} associated to the compressible
Navier-Stokes system. Note that Proposition \ref{prop:apriori} and \eqref{estrhouu} yield for $l>\frac{N}{2}+1$ using Sobolev's embedding
\begin{align}\label{eq:F}
\bfF_\varepsilon\in L^p(0,T;L^1(0,T;W^{-l,2}(\mt)))
\end{align}
uniformly in $\varepsilon$.
\iffalse
We introduce the group $(\mathcal S(\tau))_{\tau\in\R}$ generated by the operator $L$, where
\begin{align*}
L\begin{pmatrix} \varphi \\ \bfv \end{pmatrix}=-\begin{pmatrix} \divergence\bfv\\ \gamma\nabla\varphi \end{pmatrix}.
\end{align*}
As shown in \cite[Section III]{LiMa}, the operator $\mathcal S(\tau)$, $\tau\in\mr$, is an isometry
on $W^{s,2}(\mt)\times W^{s,2}(\mt)^N$ endowed with the norm
\begin{align*}
\|(\varphi,\bfv)\|=\Big(\|\varphi\|^2_{W^{s,2}(\mt)}+\gamma^{-1}\|\bfv\|^2_{W^{s,2}(\mt)}\Big)^{\frac{1}{2}}
\end{align*}
for all $s\in\R$.
We set
\begin{align*}
\psi_\varepsilon=\mathcal S_1\Big(-\frac{t}{\varepsilon}\Big)\begin{pmatrix} \varphi_\varepsilon \\ \mathcal Q(\varrho_\varepsilon\bfu_\varepsilon) \end{pmatrix},\quad \bfm_\varepsilon=\mathcal S_2\Big(-\frac{t}{\varepsilon}\Big)\begin{pmatrix} \varphi_\varepsilon \\ \mathcal Q(\varrho_\varepsilon\bfu_\varepsilon) \end{pmatrix},
\end{align*}
where we split $\mathcal S$ into its components.
\fi

\subsection{Compactness}
\label{subsec:compactness}

Let us define the path space $\mathcal{X}=\mathcal{X}_\varrho\times\mathcal{X}_\bu\times\mathcal{X}_{\varrho\bu}\times\mathcal{X}_W$ where
\begin{align*}
\mathcal{X}_\varrho&=C_w(0,T;L^\gamma(\mt)),&\mathcal{X}_\bu&=\big(L^2(0,T;W^{1,2}(\mt)),w\big),\\
\mathcal{X}_{\varrho\bu}&=C_w([0,T];L^\frac{2\gamma}{\gamma+1}(\mt)),%&\mathcal{X}_{(\psi,\bfm)}&=L^2(0,T;W^{-1,2}(\mt)),\\
&\mathcal{X}_W&=C([0,T];\mathfrak{U}_0).
\end{align*}
Let us denote by $\mu_{\varrho_\varepsilon}$, $\mu_{\bu_\varepsilon}$ and $\mu_{\mathcal P(\varrho_\varepsilon\bu_\varepsilon)}$, %$\mu_{(\psi_\varepsilon,\bfm_\varepsilon)}$,
 respectively, the law of $\varrho_\varepsilon$, $\bu_\varepsilon$, $\mathcal P(\varrho_\varepsilon\bu_\varepsilon)$ %and $(\psi_\varepsilon,\bfm_\varepsilon)$
 on the corresponding path space. By $\mu_W$ we denote the law of $W$ on $\mathcal{X}_W$ and their joint law on $\mathcal{X}$ is denoted by $\mu^\varepsilon$.

To proceed, it is necessary to establish tightness of $\{\mu^\varepsilon;\,\varepsilon\in(0,1)\}$.

\begin{proposition}\label{prop:bfutightness}
The set $\{\mu_{\bu_\varepsilon};\,\varepsilon\in(0,1)\}$ is tight on $\mathcal{X}_\bu$.
\end{proposition}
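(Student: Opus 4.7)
The plan is to exploit the fact that $\mathcal{X}_\bu = (L^2(0,T;W^{1,2}(\mt)),w)$ is the weak topology on a reflexive separable Banach space, so tightness reduces essentially to a uniform bound in the norm topology. Such a bound is already at our disposal: Corollary \ref{prop:new}, estimate \eqref{apv}, gives
\[
\stred\bigg[\int_0^T\|\bu_\varepsilon\|_{W^{1,2}(\mt)}^2\dt\bigg]\leq C
\]
uniformly in $\varepsilon\in(0,1)$.

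The key topological input is the following: for each $R>0$, the closed ball
\[
B_R=\big\{\bfv\in L^2(0,T;W^{1,2}(\mt));\ \|\bfv\|_{L^2(0,T;W^{1,2}(\mt))}\leq R\big\}
\]
is, by reflexivity of $L^2(0,T;W^{1,2}(\mt))$ and the Banach--Alaoglu theorem, weakly compact. Moreover, since $L^2(0,T;W^{1,2}(\mt))$ is separable, the weak topology restricted to $B_R$ is metrizable, hence $B_R$ is a compact subset of $\mathcal{X}_\bu$. (If one prefers to avoid the metrizability remark, note that weak compactness of $B_R$ in the reflexive space is already enough to call $B_R$ a compact set in $\mathcal{X}_\bu$, which is what is needed for tightness.)

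Combining this with Chebyshev's inequality and the uniform bound above, for any $\eta>0$ pick $R=R(\eta)$ with $C/R^2<\eta$, so
\[
\mu_{\bu_\varepsilon}(\mathcal{X}_\bu\setminus B_R)=\prst^\varepsilon\big(\|\bu_\varepsilon\|_{L^2(0,T;W^{1,2}(\mt))}>R\big)\leq\frac{1}{R^2}\stred\bigg[\int_0^T\|\bu_\varepsilon\|_{W^{1,2}(\mt)}^2\dt\bigg]\leq\frac{C}{R^2}<\eta
\]
uniformly in $\varepsilon$, which is exactly the tightness property of $\{\mu_{\bu_\varepsilon};\varepsilon\in(0,1)\}$ on $\mathcal{X}_\bu$. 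No further obstacle is anticipated: unlike the companion tightness statements for $\varrho_\varepsilon$ and $\varrho_\varepsilon\bu_\varepsilon$ (which live in $C_w$-type spaces and will require temporal equicontinuity derived from the acoustic equation \eqref{eq:approximation2} and estimate \eqref{eq:F}), here the target topology is so weak that uniform $L^2_tW^{1,2}_x$-integrability suffices and no modulus-of-continuity argument is needed.
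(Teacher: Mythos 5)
Your proof is correct and follows essentially the same route as the paper: use the uniform bound \eqref{apv}, observe that norm-balls $B_R$ are (weakly) compact in $\mathcal{X}_\bu$ by reflexivity and Banach--Alaoglu, and conclude by a Markov/Chebyshev estimate. The paper applies Markov's inequality at first order to $\E\|\bu_\varepsilon\|_{L^2(0,T;W^{1,2})}$ rather than Chebyshev at second order as you do, but this is an immaterial variation.
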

\begin{proof}
This is a consequence of \eqref{apv}. Indeed, for any $R>0$ the set
$$B_R=\big\{\bu\in L^2(0,T;W^{1,2}(\mt));\, \|\bu\|_{L^2(0,T;W^{1,2}(\mt))}\leq R\big\}$$
is relatively compact in $\mathcal{X}_\bu$ and
\begin{equation*}
 \begin{split}
  \mu_{\bu_\varepsilon}(B_R^c)=\prst\big(\|\bu_\varepsilon\|_{L^2(0,T;W^{1,2}(\mt))}\geq R\big)\leq\frac{1}{R}\stred\|\bu_\varepsilon\|_{L^2(0,T;W^{1,2}(\mt))}\leq \frac{C}{R}
 \end{split}
\end{equation*}
which yields the claim.
\end{proof}

\begin{proposition}\label{prop:rhotight}
The set $\{\mu_{\varrho_\varepsilon};\,\varepsilon\in(0,1)\}$ is tight on $\mathcal{X}_\varrho$.
\end{proposition}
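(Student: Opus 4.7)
The plan is to combine the uniform bound \eqref{aprho} on $\varrho_\varepsilon$ in $L^\infty(0,T;L^\gamma(\mt))$ with a Lipschitz-in-time bound in some negative Sobolev space extracted from the continuity equation \eqref{eq1}, and then to invoke an Arzelà--Ascoli type criterion for the space $C_w([0,T];L^\gamma(\mt))$. Since \eqref{eq1} contains no stochastic integral, no Kolmogorov/BDG gymnastics are needed here: the time regularity of $\varrho_\varepsilon$ is actually pathwise Lipschitz into a sufficiently weak norm, which is the essential simplification.

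More precisely, testing \eqref{eq1} with $\psi\in C^\infty(\mt)$ one obtains, $\mathbb P$-a.s., for all $0\le s\le t\le T$,
\begin{equation*}
\langle\varrho_\varepsilon(t)-\varrho_\varepsilon(s),\psi\rangle=\int_s^t\langle\varrho_\varepsilon\bfu_\varepsilon,\nabla\psi\rangle\,\dif\tau.
\end{equation*}
Fix $l>1+\tfrac{N}{2}$ so that $W^{1,2}(\mt)\hookrightarrow W^{l-1,2}(\mt)$ embeds into $L^\infty$, and therefore
\begin{equation*}
\|\varrho_\varepsilon(t)-\varrho_\varepsilon(s)\|_{W^{-l,2}(\mt)}\le C\,|t-s|\,\|\varrho_\varepsilon\bfu_\varepsilon\|_{L^\infty(0,T;L^{2\gamma/(\gamma+1)}(\mt))}.
\end{equation*}
In view of \eqref{estrhou2}, raising to the $p$-th power and taking expectation gives, for every $p\in[1,\infty)$,
\begin{equation*}
\stred\bigg[\sup_{0\le s<t\le T}\frac{\|\varrho_\varepsilon(t)-\varrho_\varepsilon(s)\|_{W^{-l,2}(\mt)}}{|t-s|}\bigg]^p\le C_p,
\end{equation*}
uniformly in $\varepsilon\in(0,1)$. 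Since $L^\gamma(\mt)\hookrightarrow\hookrightarrow W^{-l,2}(\mt)$ compactly (via $L^\gamma\hookrightarrow L^1\hookrightarrow W^{-l,2}$ and Rellich), a standard variant of Arzelà--Ascoli adapted to the weak topology (see, e.g., Lions' Incompressible NSE book, or Strauss' lemma) ensures that for any $R_1,R_2>0$ the set
\begin{equation*}
K_{R_1,R_2}=\Big\{\varrho:\|\varrho\|_{L^\infty(0,T;L^\gamma(\mt))}\le R_1,\ \sup_{s\neq t}\tfrac{\|\varrho(t)-\varrho(s)\|_{W^{-l,2}}}{|t-s|}\le R_2\Big\}
\end{equation*}
is relatively compact in $\mathcal{X}_\varrho=C_w(0,T;L^\gamma(\mt))$.

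Tightness then follows by a two-fold application of Chebyshev's inequality: given $\eta>0$, the bounds \eqref{aprho} and the estimate above let us choose $R_1,R_2$ so large that $\mu_{\varrho_\varepsilon}(K_{R_1,R_2}^c)\le\eta$ uniformly in $\varepsilon$. The most delicate point is the Arzelà--Ascoli step for the $C_w$-topology, which on bounded subsets of $L^\gamma(\mt)$ reduces to an honest metric compactness argument because reflexive separable $L^\gamma$ has weakly metrizable bounded sets; modulo this (standard) citation, the whole proof is a short combination of the \emph{a priori} bounds of Subsection \ref{UB} and the deterministic continuity equation.
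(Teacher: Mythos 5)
Your argument is correct and follows essentially the paper's route: derive a pathwise Lipschitz-in-time bound for $\varrho_\varepsilon$ in a negative Sobolev space directly from the (non-stochastic) continuity equation using \eqref{estrhou2}, combine it with the uniform $L^\infty(0,T;L^\gamma)$ bound \eqref{aprho}, invoke the compact embedding into $C_w([0,T];L^\gamma(\mt))$, and conclude by Chebyshev as in Proposition \ref{prop:bfutightness}. The only differences are cosmetic (the paper uses $W^{-1,2\gamma/(\gamma+1)}$ resp.\ $W^{-2,2\gamma/(\gamma+1)}$ in place of your $W^{-l,2}$, citing Ondrej\'at's Corollary B.2), apart from a small typo: the phrase ``$W^{1,2}(\mt)\hookrightarrow W^{l-1,2}(\mt)$ embeds into $L^\infty$'' should read ``$W^{l-1,2}(\mt)\hookrightarrow L^\infty(\mt)$'', which is exactly what $l>1+\tfrac{N}{2}$ guarantees and what the subsequent estimate uses.
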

\begin{proof}
Due to \eqref{estrhou2}, $\{\divergence(\varrho_\varepsilon\bu_\varepsilon)\}$ is bounded in $L^{p}(\Omega;L^\infty(0,T;W^{-1,\frac{2\gamma}{\gamma+1}}(\mt)))$ and therefore the continuity equation yields the following uniform bound, for all $p\in[1,\infty),$
$$\varrho_\varepsilon\in L^p(\Omega;C^{0,1}([0,T];W^{-1,\frac{2\gamma}{\gamma+1}}(\mt)).$$
Now, the required tightness in follows by a similar reasoning as in Proposition \ref{prop:bfutightness} together with \eqref{aprho} and the compact embedding (see \cite[Corollary B.2]{on2})
$$L^\infty(0,T;L^\gamma(\mt))\cap C^{0,1}([0,T];W^{-2,\frac{2\gamma}{\gamma+1}}(\mt))\overset{c}{\hookrightarrow} C_w([0,T];L^\gamma(\mt)).$$
\end{proof}

\begin{proposition}\label{rhoutight1}
The set $\{\mu_{\mathcal P_H(\varrho_\varepsilon\bu_\varepsilon)};\,\varepsilon\in(0,1)\}$ is tight on $\mathcal{X}_{\varrho\bu}$.
\end{proposition}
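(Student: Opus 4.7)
The plan is to exploit the fact that the Helmholtz projection $\mathcal{P}_H$ annihilates all gradient terms in the momentum equation \eqref{eq2}. In particular, both the stiff pressure term $\frac{1}{\varepsilon^2}\nabla\varrho_\varepsilon^\gamma$ and the term $(\lambda+\nu)\nabla\divergence\bfu_\varepsilon$ disappear, so that $\mathcal{P}_H(\varrho_\varepsilon\bfu_\varepsilon)$ satisfies an evolution equation without any factor blowing up in $\varepsilon$:
\begin{align*}
\dif\mathcal{P}_H(\varrho_\varepsilon\bfu_\varepsilon)
= \mathcal{P}_H\big[\nu\Delta\bfu_\varepsilon - \divergence(\varrho_\varepsilon\bfu_\varepsilon\otimes\bfu_\varepsilon)\big]\dt
+ \mathcal{P}_H\varPhi(\varrho_\varepsilon,\varrho_\varepsilon\bfu_\varepsilon)\,\dif W.
\end{align*}
Following the template of Proposition \ref{prop:rhotight}, I would combine the uniform bound \eqref{estrhou2} in $L^\infty(0,T; L^{\frac{2\gamma}{\gamma+1}}(\mt))$ with a Hölder-in-time estimate in a weaker Hilbert space. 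More precisely, fixing $l>N/2+1$ and some $\alpha\in(0,1/2)$, I would show that $\{\mathcal{P}_H(\varrho_\varepsilon\bfu_\varepsilon)\}$ is uniformly bounded in $L^p(\Omega; C^\alpha([0,T]; W^{-l,2}(\mt)))$ and then invoke the compact embedding
\[
L^\infty(0,T; L^{\frac{2\gamma}{\gamma+1}}(\mt))\cap C^\alpha([0,T]; W^{-l,2}(\mt)) \overset{c}{\hookrightarrow} C_w([0,T]; L^{\frac{2\gamma}{\gamma+1}}(\mt))
\]
together with Markov's inequality.

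I would split $\mathcal{P}_H(\varrho_\varepsilon\bfu_\varepsilon)$ into its initial value, drift part, and stochastic integral. The drift integrand is controlled by \eqref{apv} and \eqref{estrhouu} together with the continuity of $\mathcal{P}_H$ on negative Sobolev spaces, so it is uniformly bounded in $L^p(\Omega; L^2(0,T; W^{-l,2}(\mt)))$; hence its antiderivative lies in $L^p(\Omega; C^{1/2}([0,T]; W^{-l,2}(\mt)))$. For the stochastic integral, the Hilbert--Schmidt estimate \eqref{stochest}, the (time-independent) mean value $(\varrho_\varepsilon(t))_{\mt}\le M$, and the energy bound from Proposition \ref{prop:apriori} show that $\mathcal{P}_H\varPhi(\varrho_\varepsilon,\varrho_\varepsilon\bfu_\varepsilon)$ is bounded in $L^p(\Omega; L^\infty(0,T; L_2(\mathfrak{U}; W^{-l,2}(\mt))))$. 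The Burkholder--Davis--Gundy inequality then yields $\E\|M_\varepsilon(t)-M_\varepsilon(s)\|_{W^{-l,2}}^p \le C|t-s|^{p/2}$ uniformly in $\varepsilon$, and Kolmogorov's continuity theorem upgrades this to a uniform bound in $L^p(\Omega; C^\alpha([0,T]; W^{-l,2}(\mt)))$ for any $\alpha<1/2-1/p$.

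Combining these ingredients with Markov's inequality, the laws of $\mathcal{P}_H(\varrho_\varepsilon\bfu_\varepsilon)$ concentrate on relatively compact subsets of $\mathcal{X}_{\varrho\bu}$, which is the desired tightness. The crucial structural point, and what makes the whole argument go through, is that the cancellation of the gradient terms by $\mathcal{P}_H$ removes the singular $\varepsilon^{-2}$ factor; without it, the drift would carry a pressure term that explodes as $\varepsilon\to 0$ and no uniform time-regularity could be obtained. Once this cancellation is observed, the rest is a routine BDG plus Kolmogorov argument on the basis of the uniform estimates already collected in Corollary \ref{prop:new} and Proposition \ref{prop:apriori}.
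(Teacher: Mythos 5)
Your proposal is correct and follows essentially the same route as the paper: the same observation that $\mathcal{P}_H$ removes the singular pressure and $\nabla\divergence$ terms, the same decomposition into drift and stochastic integral, the same Hölder estimate in $C^\kappa([0,T];W^{-l,2}(\mt))$ via Kolmogorov and Burkholder--Davis--Gundy, and the same compact embedding of $L^\infty L^{\frac{2\gamma}{\gamma+1}}\cap C^\kappa W^{-l,2}$ into $C_w([0,T];L^{\frac{2\gamma}{\gamma+1}}(\mt))$.
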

\begin{proof}
We decompose $\mathcal P_H(\varrho_\varepsilon\bu_\varepsilon)$ into two parts, namely, $\mathcal P_H(\varrho_\varepsilon\bu_\varepsilon)(t)=Y^\varepsilon(t)+Z^\varepsilon(t)$, where
\begin{equation*}
 \begin{split}
Y^\varepsilon(t)&=\mathcal P_H\bfq_\varepsilon(0)-\int_0^t\mathcal P_H\big[\divergence(\varrho_\varepsilon\bu_\varepsilon\otimes\bu_\varepsilon)
-\nu\Delta\bu_\varepsilon
\big]\dif s,\\
Z^\varepsilon(t)&=\int_0^t\,\mathcal P_H \varPhi(\varrho_\varepsilon,\varrho_\varepsilon\bu_\varepsilon) \,\dif W(s).
 \end{split}
\end{equation*}
{\em H\"older continuity of $(Y^\varepsilon)$.}
We show that there exists $l\in\mn$ such that for all $\kappa\in(0,1/2)$ it holds true
\begin{equation}\label{eq:holderZ2}
\stred\|Y^\varepsilon\|_{C^\kappa([0,T];W^{-l,2}(\mt))}\leq C.
\end{equation}
Choose $l$ such that $L^1(\mt)\hookrightarrow W^{-l+1,2}(\mt)$. The a priori estimates \eqref{apv} and \eqref{estrhouu} and the continuity of $\mathcal P$ yield
\begin{equation*}
\begin{split}
\stred&\,\bigg\|Y^\varepsilon(t)-Y^\varepsilon(s)\bigg\|^\theta_{W^{-l,2}(\mt)}= \,\stred\,\bigg\|\int_s^t\mathcal P\big[\divergence(\varrho_\varepsilon\bu_\varepsilon\otimes\bu_\varepsilon)
+\nu\Delta\bu_\varepsilon
\big]\dif s\bigg\|^\theta_{W^{-l,2}(\mt)}\\
&\leq\,C\,\stred\,\bigg\|\int_s^t\divergence(\varrho_\varepsilon\bu_\varepsilon\otimes\bu_\varepsilon)
\,\dif s\bigg\|^\theta_{W^{-l,2}(\mt)}+\,C\,\stred\,\bigg\|\int_s^t\Delta\bfu_\varepsilon\,\dif s\bigg\|^\theta_{W^{-l,2}(\mt)}\\
&\leq\,C\,\stred\,\bigg\|\int_s^t\varrho_\varepsilon\bu_\varepsilon\otimes\bu_\varepsilon
\,\dif s\bigg\|^\theta_{L^{1}(\mt)}+\,C\,\stred\,\bigg\|\int_s^t\nabla\bfu_\varepsilon\,\dif s\bigg\|^\theta_{L^{1}(\mt)}\leq C|t-s|^{\theta/2}
\end{split}
\end{equation*}
and \eqref{eq:holderZ2} follows by the Kolmogorov continuity criterion.

{\em H\"older continuity of $(Z^\varepsilon)$.}
Next, we show that also
\begin{equation*}
\stred\|Z^\varepsilon\|_{C^\kappa([0,T];W^{-l,2}(\mt))}\leq C,
\end{equation*}
where $l\in\mn$ was given by the previous step and $\kappa\in(0,1/2)$.
From the embedding $L^1(\mt)\hookrightarrow W^{-l,2}(\mt)$, \eqref{growth1-}, \eqref{growth1}, the a priori estimates and the continuity of $\mathcal P_H$ we get
\begin{equation*}
\begin{split}
\stred&\,\bigg\|Z^\varepsilon(t)-Z^\varepsilon(s)\bigg\|^\theta_{W^{-l,2}(\mt)}= \,\stred\,\bigg\|\int_s^t\mathcal P_H\varPhi(\varrho_\varepsilon,\varrho_\varepsilon\bu_\varepsilon)\,\dif W\bigg\|^\theta_{W^{-l,2}(\mt)}\\
&\leq\,C\,\stred\,\bigg\|\int_s^t\varPhi(\varrho_\varepsilon,\varrho_\varepsilon\bu_\varepsilon)\,\dif W\bigg\|^\theta_{W^{-l,2}(\mt)}\leq C\,\stred\bigg(\int_s^t\sum_{k\geq1}\big\| \bfg_k(\varrho_\varepsilon,\varrho_\varepsilon\bu_\varepsilon)\big\|_{W^
{-l,2}}^2\,\dif r\bigg)^\frac{\theta}{2}\\
&\leq C\,\stred\bigg(\int_s^t\sum_{k\geq1}\big\|\bfg_k(\varrho_\varepsilon,\varrho_\varepsilon\bu_\varepsilon)\big\|_{L^
{1}}^2\,\dif r\bigg)^\frac{\theta}{2}\\
&\leq C\,\stred\bigg(\int_s^t\int_{\mt}(\varrho_\varepsilon+\varrho_\varepsilon|\bu_\varepsilon|^2+\varrho_\varepsilon^\gamma)\,\dif x\,\dif r\bigg)^{\frac{\theta}{2}}\\
&\leq C|t-s|^{\frac{\theta}{2}}\Big(1+\stred\sup_{0\leq t\leq T}\|\sqrt\varrho_\varepsilon\bu_\varepsilon\|_{L^{2}}^{\theta}+\stred\sup_{0\leq t\leq T}\|\varrho_\varepsilon\|_{L^\gamma}^{\theta\gamma/2}\Big)\leq C|t-s|^{\frac{\theta}{2}}
\end{split}
\end{equation*}
and the Kolmogorov continuity criterion applies.

{\em Conclusion.}
Collecting the above results we obtain that
$$\stred\|\mathcal P_H(\varrho_\varepsilon\bfu_\varepsilon)\|_{C^\kappa([0,T];W^{-l,2}(\mt)}\leq C$$
for some $l\in\mn$ and all $\kappa\in(0,1/2)$. This implies the desired tightness by making use of \eqref{estrhou2}, continuity of $\mathcal{P}_H$ together with the compact embedding (see \cite[Corollary B.2]{on2})
$$L^\infty(0,T;L^\frac{2\gamma}{\gamma+1}(\mt))\cap C^{\kappa}([0,T];W^{-l,2}(\mt))\overset{c}{\hookrightarrow} C_w([0,T];L^\frac{2\gamma}{\gamma+1}(\mt)).$$
\end{proof}

Since also the law$\mu_W$ is tight as being Radon measures on the Polish space $\mathcal{X}_W$ we can finally deduce tightness of the joint laws $\mu^\varepsilon$.

\begin{corollary}\label{cor:tight}
The set $\{\mu^\varepsilon;\,\varepsilon\in(0,1)\}$ is tight on $\mathcal{X}$.
\end{corollary}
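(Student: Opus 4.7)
The plan is to reduce the joint tightness statement to the tightness of each marginal law, which has already been established, and then invoke the standard fact that a finite product of compact sets (one from each factor) is compact in the product topology on $\mathcal{X}$.

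First, I would collect the four individual tightness results. Propositions \ref{prop:rhotight}, \ref{prop:bfutightness}, and \ref{rhoutight1} give tightness of $\{\mu_{\varrho_\varepsilon}\}$ on $\mathcal{X}_\varrho$, $\{\mu_{\bu_\varepsilon}\}$ on $\mathcal{X}_\bu$, and $\{\mu_{\mathcal P_H(\varrho_\varepsilon\bu_\varepsilon)}\}$ on $\mathcal{X}_{\varrho\bu}$, respectively. Since $\mathcal{X}_W=C([0,T];\mathfrak{U}_0)$ is Polish and $\mu_W$ does not depend on $\varepsilon$, $\{\mu_W\}$ is automatically tight as a single Radon measure on a Polish space.

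Given $\eta>0$, I would then apply the tightness of each marginal to obtain compact sets $K^\eta_\varrho\subset\mathcal{X}_\varrho$, $K^\eta_\bu\subset\mathcal{X}_\bu$, $K^\eta_{\varrho\bu}\subset\mathcal{X}_{\varrho\bu}$ and $K^\eta_W\subset\mathcal{X}_W$ such that, uniformly in $\varepsilon\in(0,1)$,
\begin{equation*}
\mu_{\varrho_\varepsilon}\big((K^\eta_\varrho)^c\big),\ \mu_{\bu_\varepsilon}\big((K^\eta_\bu)^c\big),\ \mu_{\mathcal P_H(\varrho_\varepsilon\bu_\varepsilon)}\big((K^\eta_{\varrho\bu})^c\big),\ \mu_W\big((K^\eta_W)^c\big)\ <\ \frac{\eta}{4}.
\end{equation*}
Setting $K^\eta=K^\eta_\varrho\times K^\eta_\bu\times K^\eta_{\varrho\bu}\times K^\eta_W$, Tychonoff's theorem (applied to four factors) yields compactness of $K^\eta$ in $\mathcal{X}$, and a union bound on the complement gives $\mu^\varepsilon\big((K^\eta)^c\big)<\eta$ uniformly in $\varepsilon$, which is the required tightness.

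There is essentially no obstacle here; the only point worth checking is that compactness of the factors really does imply compactness in the intended product topology on $\mathcal{X}$, which is immediate because the topology on $\mathcal{X}$ is by construction the product of the four factor topologies, and Tychonoff applies regardless of whether the individual factors (such as $\mathcal{X}_\bu$ equipped with the weak topology, or $\mathcal{X}_\varrho$ equipped with the weak-$\ast$ continuous topology) are metrizable. This is precisely why the authors have set up the Jakubowski--Skorokhod framework in the preceding discussion, so that the subsequent representation theorem can still be applied to the tight family $\{\mu^\varepsilon\}$ on the non-metric space $\mathcal{X}$.
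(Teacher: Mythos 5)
Your argument is correct and is exactly the (only briefly sketched) argument in the paper: tightness of each marginal (Propositions \ref{prop:rhotight}, \ref{prop:bfutightness}, \ref{rhoutight1}, plus the Radon property of $\mu_W$ on the Polish space $\mathcal{X}_W$) yields compact sets in each factor, Tychonoff gives compactness of their product in $\mathcal{X}$, and a union bound on the complement finishes the claim. Nothing to add.
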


The path space $\mathcal{X}$ is not a Polish space and so our compactness argument is based on the Jakubowski-Skorokhod representation theorem instead of the classical Skorokhod representation theorem, see \cite{jakubow}. To be more precise, passing to a weakly convergent subsequence $\mu^\varepsilon$ (and denoting by $\mu$ the limit law) we infer the following result.

\begin{proposition}\label{prop:skorokhod1}
There exists a subsequence $\mu^\varepsilon$, a probability space $(\tilde\Omega,\tilde\mf,\tilde\prst)$ with $\mathcal{X}$-valued Borel measurable random variables $(\tilde\varrho_\varepsilon,\tilde\bu_\varepsilon,\tilde\bq_\varepsilon,\tilde W_\varepsilon)$, $n\in\mn$, and $(\tilde\varrho,\tilde\bu,\tilde\bq,\tilde W)$ such that
\begin{enumerate}
 \item the law of $(\tilde\varrho_\varepsilon,\tilde\bu_\varepsilon,\tilde\bq_\varepsilon,\tilde W_\varepsilon)$ is given by $\mu^\varepsilon$, $\varepsilon\in(0,1)$,
\item the law of $(\tilde\varrho,\tilde\bu,\tilde\bq,\tilde W)$, denoted by $\mu$, is a Radon measure,
 \item $(\tilde\varrho_\varepsilon,\tilde\bu_\varepsilon,\tilde\bq_\varepsilon,\tilde W_\varepsilon)$ converges $\,\tilde{\prst}$-a.s. to $(\tilde\varrho,\tilde{\bu},\tilde\bq,\tilde{W})$ in the topology of $\mathcal{X}$.
\end{enumerate}
\end{proposition}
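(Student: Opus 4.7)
The plan is to view Proposition \ref{prop:skorokhod1} as a direct application of the Jakubowski--Skorokhod representation theorem to the tight family $\{\mu^\varepsilon\}$ already established in Corollary \ref{cor:tight}. Since the classical Skorokhod theorem requires a Polish space and three out of the four factors of $\mathcal{X}$ are equipped with weak or weak-star type topologies (hence are not even metrizable on bounded sets of the whole spaces), the correct tool is Jakubowski's generalization, which applies to any topological space on which there exists a countable family of real-valued continuous functions that separates points (a so-called quasi-Polish space in our terminology).

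First I would verify that this separation condition is satisfied by each factor of $\mathcal{X}$. For $\mathcal{X}_W=C([0,T];\mathfrak{U}_0)$ the space is Polish and nothing needs to be done. For $\mathcal{X}_{\bu}=(L^2(0,T;W^{1,2}(\mt)),w)$ the family of functionals $\bu\mapsto\int_0^T\langle\bu(t),\varphi(t)\rangle\dt$ indexed by a countable dense set of test functions $\varphi$ in the dual does the job. For the two weakly continuous spaces $\mathcal{X}_\varrho=C_w([0,T];L^\gamma(\mt))$ and $\mathcal{X}_{\varrho\bu}=C_w([0,T];L^{2\gamma/(\gamma+1)}(\mt))$ a separating family is given by the functionals $f\mapsto\sup_{t\in[0,T]\cap\mathbb{Q}}|\langle f(t),\psi\rangle|$ with $\psi$ ranging over a countable dense subset of the corresponding dual Lebesgue space. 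Hence the product space $\mathcal{X}$ also admits a countable separating family of continuous functions and Jakubowski's theorem is applicable.

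Combining Corollary \ref{cor:tight} with Jakubowski--Skorokhod, one extracts a subsequence (not relabelled) such that $\mu^\varepsilon$ converges weakly to some Borel probability measure $\mu$ on $\mathcal{X}$, and one obtains a new probability space $(\tilde\Omega,\tilde{\mathcal F},\tilde{\mathbb P})$ carrying random variables $(\tilde\varrho_\varepsilon,\tilde\bu_\varepsilon,\tilde\bq_\varepsilon,\tilde W_\varepsilon)$ with prescribed laws $\mu^\varepsilon$ and a limit $(\tilde\varrho,\tilde\bu,\tilde\bq,\tilde W)$ with law $\mu$, such that the convergence holds $\tilde{\mathbb P}$-a.s. in $\mathcal{X}$. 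This immediately yields items (a) and (c). As for item (b), the Radon property of $\mu$ is automatic: since each component factor is, on bounded balls, metrizable and separable (weak or weak-$*$ topologies on separable reflexive/dual Banach spaces restricted to norm-bounded sets are so), and since tightness concentrates the mass on countable unions of such balls, inner regularity on compacts holds and $\mu$ is a Radon measure.

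The only subtle point—which I expect to be the main obstacle in making the argument fully rigorous—is the verification of the countable separating family for the weakly continuous spaces; once this is in place the remainder is a mechanical invocation of the abstract theorem. In particular, no new analytic estimate beyond those already proven in Propositions \ref{prop:bfutightness}--\ref{rhoutight1} is needed, and the choice of which Wiener process to carry along on the new probability space is handled by standard martingale identification arguments, which will be used in the sequel to identify $\tilde W_\varepsilon$ as a cylindrical Wiener process with respect to the filtration generated by $(\tilde\varrho_\varepsilon,\tilde\bu_\varepsilon,\tilde\bq_\varepsilon,\tilde W_\varepsilon)$.
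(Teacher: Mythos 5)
Your approach is precisely the paper's: Corollary~\ref{cor:tight} gives tightness of $\{\mu^\varepsilon\}$ on $\mathcal X$, and then one passes to a weakly convergent subsequence and invokes the Jakubowski--Skorokhod representation theorem (Theorem~\ref{thm:jakubow}), which is applicable because $\mathcal X$ is quasi-Polish. The paper leaves the quasi-Polish verification implicit, whereas you spell it out; that is the right thing to do, but your proposed separating family on $\mathcal X_\varrho$ and $\mathcal X_{\varrho\bu}$ is not quite correct. The functionals $f\mapsto\sup_{t\in[0,T]\cap\mathbb Q}|\langle f(t),\psi\rangle|$ are indeed continuous for the $C_w$-topology, but they do \emph{not} separate points: two distinct $f,g$ can have identical suprema for every $\psi$ (e.g.\ both attain the same maximal pairing at different times while disagreeing elsewhere). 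What does work, and is standard, is the countable family of evaluation functionals $f\mapsto\langle f(t),\psi\rangle$ with $t$ ranging over $[0,T]\cap\mathbb Q$ and $\psi$ over a countable dense set in the dual Lebesgue space; by weak continuity in $t$, agreement on this family forces $f=g$. Finally, item~(b) (Radon-ness of the limit law) requires no additional argument as you offer: it is already part of the conclusion of Theorem~\ref{thm:jakubow}. With the separating-family corrected, the proposal is a faithful, slightly more explicit version of the paper's proof.
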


\color{red}

%Remark EF:
%In the case of unbounded domain (the whole space $R^N$) we should be able to show directly in Proposition 3.5 that
%$\mu_{(\varrho_\varepsilon \bfu_{\varepsilon})}$ is tight in $X_{\varrho \bfu}$...

\color{black}

Let us now fix some notation that will be used in the sequel. We denote by $\bfr_t$ the operator of restriction to the interval $[0,t]$ acting on various path spaces. In particular, if $X$ stands for one of the path spaces $\mathcal{X}_\varrho,\,\mathcal{X}_{\bfu}$ or $\mathcal{X}_{W}$ and $t\in[0,T]$, we define%\marginpar{\textcolor{red}{what about $\mathcal X_\bfq$?}\rmk{this doesn't have to be included in filtration as $\bfq=\varrho\bfu$ is a measurable function of $\varrho$ and $\bfu$, who are already in the filtration}}
\begin{align}\label{restr}
\bfr_t:X\rightarrow X|_{[0,t]},\quad f\mapsto f|_{[0,t]}.
\end{align}
Clearly, $ \bfr_t$ is a continuous mapping.
Let $(\tilde{\mf}_t^\varepsilon)$ and $(\tilde{\mf}_t)$, respectively, be the $\tilde{\prst}$-augmented canonical filtration of the process $(\tilde\varrho_\varepsilon,\tilde{\bu}_\varepsilon,\tilde{W}_\varepsilon)$ and $(\tilde\varrho,\tilde{\bu},\tilde{W})$, respectively, that is
\begin{equation*}
\begin{split}
\tilde{\mf}_t^\varepsilon&=\sigma\big(\sigma\big(\bfr_t\tilde\varrho_\varepsilon,\,\bfr_t\tilde{\bu}_\varepsilon,\,\bfr_t \tilde{W}_\varepsilon\big)\cup\big\{N\in\tilde{\mf};\;\tilde{\prst}(N)=0\big\}\big),\quad t\in[0,T],\\
\tilde{\mf}_t&=\sigma\big(\sigma\big(\,\bfr_t\tilde{\bu},\,\bfr_t\tilde{W}\big)\cup\big\{N\in\tilde{\mf};\;\tilde{\prst}(N)=0\big\}\big),\quad t\in[0,T].
\end{split}
\end{equation*}

\subsection{Identification of the limit}
\label{subsec:ident}

The aim of this subsection is to identify the limit processes given by Proposition \ref{prop:skorokhod1} with a weak martingale solution to \eqref{eq:lim}. Namely, we prove the following result which in turn verifies Theorem \ref{thm:main}.

\begin{theorem}\label{thm:1}
The process $\tilde W$ is a $(\tilde\mf_t)$-cylindrical Wiener process and
$$\big((\tilde\Omega,\tilde\mf,(\tilde\mf_t),\tilde\prst),\tilde \bfu,\tilde W\big)$$
is a weak martingale solution to \eqref{eq:lim} with the initial law $\Lambda$.

\end{theorem}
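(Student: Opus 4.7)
The plan is to project the compressible momentum equation against solenoidal test functions, eliminating the singular pressure, and then pass to the limit $\varepsilon \to 0$ on the new probability space using the almost sure convergences supplied by Proposition \ref{prop:skorokhod1}. The identification task splits into four pieces: (a) recognizing $\tilde W$ as an $(\tilde\mf_t)$-cylindrical Wiener process; (b) matching the initial law and verifying $\divergence \tilde\bfu = 0$; (c) passing to the limit in the deterministic part of the momentum balance; (d) passing to the limit in the stochastic integral. The main obstacle is clearly (c), specifically the convective term, where only the weak convergence of $\tilde\bfu_\varepsilon$ is available and where persistent acoustic oscillations at frequency $\varepsilon^{-1}$ live in the gradient component $\mathcal Q \tilde\bfu_\varepsilon$.

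Tasks (a) and (b) are essentially routine. Equality of laws from Proposition \ref{prop:skorokhod1}(a) transfers the cylindrical Wiener property of $W_\varepsilon$ to $\tilde W_\varepsilon$ relative to $(\tilde\mf_t^\varepsilon)$; a standard argument based on L\'evy's martingale characterization (cf.\ \cite{BrHo}) then promotes this to $\tilde W$ relative to $(\tilde\mf_t)$. The initial law $\Lambda$ is inherited from the hypothesis that the second marginal of $\Lambda_\varepsilon$ converges weakly to $\Lambda$ together with the evaluation at $t=0$ in $\mathcal X_{\varrho\bfu}$. For the divergence-free condition, the rescaled continuity equation \eqref{eq:approximation21} reads $\varepsilon\,\dif \tilde\varphi_\varepsilon + \divergence\mathcal Q(\tilde\varrho_\varepsilon \tilde\bfu_\varepsilon)\,\dif t = 0$, and since $\tilde\varphi_\varepsilon$ is uniformly bounded by \eqref{apphi}, one concludes $\divergence(\tilde\varrho_\varepsilon \tilde\bfu_\varepsilon) \to 0$ in the sense of distributions; combined with $\tilde\varrho_\varepsilon \to 1$ strongly and $\tilde\bfu_\varepsilon \rightharpoonup \tilde\bfu$ weakly this forces $\divergence \tilde\bfu = 0$, $\tilde\prst$-a.s.

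For (c), I test against $\bfvarphi \in C^\infty_{\divergence}(\mt)$, which annihilates both the $\varepsilon^{-2}\nabla \tilde\varrho_\varepsilon^\gamma$ pressure term and the $(\lambda+\nu)\nabla\divergence\tilde\bfu_\varepsilon$ term. The viscous term $\langle \nabla\tilde\bfu_\varepsilon, \nabla\bfvarphi\rangle$ passes to the limit directly by weak convergence in $L^2(0,T; W^{1,2}(\mt))$. The hard part is
\[
\int_0^t \langle \tilde\varrho_\varepsilon \tilde\bfu_\varepsilon \otimes \tilde\bfu_\varepsilon, \nabla\bfvarphi\rangle\,\dif s.
\]
Since $\tilde\varrho_\varepsilon \to 1$ strongly, it suffices to analyze $\tilde\bfu_\varepsilon \otimes \tilde\bfu_\varepsilon$. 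I would decompose $\tilde\bfu_\varepsilon = \mathcal P_H \tilde\bfu_\varepsilon + \mathcal Q \tilde\bfu_\varepsilon$; the solenoidal part converges strongly in $L^2(0,T; L^q(\mt))$ by the H\"older estimate derived in Proposition \ref{rhoutight1} together with Aubin--Lions. The oscillating gradient part is governed by the stochastic acoustic system \eqref{eq:approximation21}--\eqref{eq:approximation22}, and must be handled by the local method of Lions--Masmoudi \cite{LiMa,LiMa2} adapted to the stochastic setting: one shows that $\mathcal P_H \divergence(\mathcal Q\tilde\bfu_\varepsilon \otimes \mathcal Q\tilde\bfu_\varepsilon)$ is asymptotically a gradient (hence annihilated by $\mathcal P_H$) using the structure of the acoustic wave equation and the uniform bound \eqref{eq:F} on $\bfF_\varepsilon$, while the mixed terms involving $\mathcal P_H \tilde\bfu_\varepsilon \otimes \mathcal Q \tilde\bfu_\varepsilon$ vanish because the solenoidal factor converges strongly against the weakly null $\mathcal Q \tilde\bfu_\varepsilon$.

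For (d), I invoke the standard identification lemma for stochastic integrals under Jakubowski--Skorokhod convergence (see, e.g., \cite{on2}). Linearity of $\varPhi$ in the momentum is crucial: writing $\varPhi(\varrho,\varrho\bfu)e_k = \bfh_k(\cdot,\varrho) + \alpha_k \varrho\bfu$, the $C^1$ continuity of $\bfh_k$ in $\varrho$ together with the growth bound \eqref{growth1} and the strong convergence $\tilde\varrho_\varepsilon \to 1$ yield $\bfh_k(\cdot,\tilde\varrho_\varepsilon) \to \bfh_k(\cdot,1)$, while $\tilde\varrho_\varepsilon \tilde\bfu_\varepsilon \rightharpoonup \tilde\bfu$ in an appropriate topology allows the identification
\[
\int_0^t \mathcal P_H \varPhi(\tilde\varrho_\varepsilon, \tilde\varrho_\varepsilon\tilde\bfu_\varepsilon)\,\dif \tilde W_\varepsilon \longrightarrow \int_0^t \varPsi(\tilde\bfu)\,\dif \tilde W
\]
in probability in $W^{-l,2}(\mt)$. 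Testing against $\bfvarphi \in C^\infty_{\divergence}(\mt)$ and combining (a)--(d) shows that $\tilde\bfu$ satisfies every item of Definition \ref{def:inc} with initial law $\Lambda$, which is exactly Theorem \ref{thm:1}.
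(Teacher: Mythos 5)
Your plan is sound and in steps (a)--(c) it follows the paper's strategy closely: transfer of the Wiener property to $\tilde W$, deducing $\divergence\tilde\bfu=0$ from the continuity equation together with $\tilde\varrho_\varepsilon\to 1$ and $\tilde\bfu_\varepsilon\rightharpoonup\tilde\bfu$, strong convergence of $\mathcal P_H\tilde\bfu_\varepsilon$ from the H\"older estimate plus Aubin--Lions, and the Lions--Masmoudi ``local method'' for the oscillating part $\mathcal Q\otimes\mathcal Q$. This last piece is the real work, and your description glosses over its stochastic content: the paper first regularizes by projection onto finitely many Fourier modes (so that the acoustic system \eqref{eq:approximation2} becomes a finite-dimensional It\^o system), then applies It\^o's formula to the bilinear functional $\int_{\mt}\varepsilon\tilde\varphi^\kappa_\varepsilon\nabla\tilde\Psi^\kappa_\varepsilon\cdot\bfvarphi\dx$ with $\bfvarphi$ solenoidal, and observes that the resulting identity expresses $\int_0^t\!\int\nabla\tilde\Psi^\kappa_\varepsilon\otimes\nabla\tilde\Psi^\kappa_\varepsilon:\nabla\bfvarphi$ as a sum of terms all carrying a prefactor $\varepsilon$, including an $\varepsilon$-scaled stochastic integral that vanishes in the limit. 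Simply asserting that the convective oscillation ``is asymptotically a gradient'' hides where the stochastic forcing of the wave equation goes; the point is that it enters at order $\varepsilon$.

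Step (d) is where you genuinely diverge from the paper. You propose to identify the limit of $\int_0^t\mathcal P_H\varPhi(\tilde\varrho_\varepsilon,\tilde\varrho_\varepsilon\tilde\bfu_\varepsilon)\,\dif\tilde W_\varepsilon$ by invoking a direct convergence lemma for stochastic integrals under Jakubowski--Skorokhod convergence. The paper does something different: it never passes to the limit in the stochastic integral itself. Instead it defines the functionals $M,N,N_k$, uses equality of laws to transfer the relations $\stred\,h(\cdot)\,[M]_{s,t}=0$, $\stred\,h(\cdot)\,\big[[M^2]_{s,t}-N_{s,t}\big]=0$, $\stred\,h(\cdot)\,\big[[M\beta_k]_{s,t}-N_{k,s,t}\big]=0$ to the tilde-space, and then passes $\varepsilon\to0$ in these purely deterministic expectation identities (using Vitali for the moments and a.e.\ convergence of the integrands). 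This yields that $M(1,\tilde\bfu,\tilde\bfu)$ is an $(\tilde\mf_t)$-martingale whose quadratic/cross variations agree with those of $\int_0^\cdot\langle\varPhi(1,\tilde\bfu)\,\dif\tilde W,\bfvarphi\rangle$, hence the two processes coincide. The martingale-characterization route has the advantage of sidestepping any adaptedness/filtration compatibility issues across the changing probability setting, and of requiring only pointwise-a.e.\ convergence of $\langle\varPhi(\tilde\varrho_\varepsilon,\tilde\varrho_\varepsilon\tilde\bfu_\varepsilon)\cdot,\bfvarphi\rangle$ in $L_2(\mathfrak U;\mr)$ plus uniform $L^p$ bounds. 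Your route is also viable in principle (such convergence lemmas do exist), but you would need to verify carefully that $\tilde W_\varepsilon\to\tilde W$ in $C([0,T];\mathfrak U_0)$ and the integrands are adapted to a common filtration in a way that the lemma accepts; the paper deliberately avoids this bookkeeping. So: correct in substance, same backbone for (a)--(c), but (d) is a different (and somewhat less robust) mechanism than the one actually used.
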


The proof proceeds in several steps. First of all, we show that also on the new probability space $(\tilde\Omega,\tilde\mf,\tilde\p)$, the approximations $\tilde\varrho_\varepsilon,\,\tilde\bfu_\varepsilon$ solve the corresponding compressible Navier-Stokes system \eqref{eq:}.

\begin{proposition}\label{prop:limit1}
Let $\varepsilon\in(0,1)$. The process $\tilde{W}_\varepsilon$ is a $(\tilde{\mf}_t)$-cylindrical Wiener process and
$$\big((\tilde{\Omega},\tilde{\mf},(\tilde{\mf}_t^\varepsilon),\tilde{\prst}),\tilde\varrho_\varepsilon,\tilde{\bu}_\varepsilon,\tilde{W}_\varepsilon\big)$$
is a finite energy weak martingale solution to \eqref{eq:} with initial law $\Lambda_\varepsilon$.
\end{proposition}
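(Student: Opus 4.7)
The plan is to transfer the martingale solution property from the original probability space to the new one $(\tilde\Omega,\tilde\mf,\tilde\prst)$ by exploiting the equality of laws given by Proposition \ref{prop:skorokhod1}(a) together with a standard martingale identification argument.

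First I would verify that $\tilde W_\varepsilon$ is a $(\tilde\mf^\varepsilon_t)$-cylindrical Wiener process. Since $\tilde W_\varepsilon$ has the same law as $W_\varepsilon$ on $C([0,T];\mathfrak{U}_0)$, it admits the expansion $\tilde W_\varepsilon = \sum_k \tilde\beta_k^\varepsilon e_k$ with real-valued Brownian motions $\tilde\beta_k^\varepsilon$. Adaptedness to $(\tilde\mf_t^\varepsilon)$ is built into the definition of the filtration. The essential point is the independence of increments $\tilde W_\varepsilon(t)-\tilde W_\varepsilon(s)$ from $\tilde\mf_s^\varepsilon$, which follows by a Lévy-type characterization: for any bounded continuous functional $h$ of $(\bfr_s\tilde\varrho_\varepsilon,\bfr_s\tilde\bu_\varepsilon,\bfr_s\tilde W_\varepsilon)$ and any $\xi\in\mathfrak{U}$, one checks the identities
\begin{align*}
\tilde{\mathbb E}\bigl[h\,\langle \tilde W_\varepsilon(t)-\tilde W_\varepsilon(s),\xi\rangle\bigr]&=0,\\
\tilde{\mathbb E}\bigl[h\bigl(\langle \tilde W_\varepsilon(t)-\tilde W_\varepsilon(s),\xi\rangle^2-(t-s)\|\xi\|_{\mathfrak U}^2\bigr)\bigr]&=0,
\end{align*}
which hold on the original space by the cylindrical Wiener property of $W_\varepsilon$ and therefore transfer to the tilde space by equality in joint law.

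Next I would check the continuity equation and the initial law. The continuity equation is purely deterministic in $(\varrho_\varepsilon,\bu_\varepsilon)$, so the functional
\begin{align*}
(\varrho,\bu)\mapsto \sup_{t\in[0,T]}\Bigl|\langle\varrho(t),\psi\rangle-\langle\varrho(0),\psi\rangle-\int_0^t\langle\varrho\bu,\nabla\psi\rangle\dif s\Bigr|
\end{align*}
is Borel-measurable on $\mathcal X_\varrho\times\mathcal X_{\bu}$, vanishes $\prst$-a.s., and hence vanishes $\tilde\prst$-a.s. by equality of laws. The initial law condition $\tilde\prst\circ(\tilde\varrho_\varepsilon(0),(\tilde\varrho_\varepsilon\tilde\bu_\varepsilon)(0))^{-1}=\Lambda_\varepsilon$ is likewise immediate from Proposition \ref{prop:skorokhod1}(a) upon noting that evaluation at $t=0$ is continuous on $\mathcal X_\varrho$ and $\mathcal X_{\varrho\bu}$. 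The a.s.\ bounds appearing in Definition \ref{def:sol} (nonnegativity of $\varrho$, weak continuity of $\langle\varrho,\psi\rangle$, $\langle\varrho\bu,\phi\rangle$, integrability conditions) are preserved by the identity of laws and Fatou's lemma, and the energy inequality \eqref{energy} transfers by the same reasoning.

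The main obstacle, and the step I would spend most care on, is the identification of the stochastic integral in the momentum equation. Here I would follow the now-classical martingale approach used e.g. in \cite{BrHo}. Define, for fixed $\bfvarphi\in C^\infty(\mt)$,
\begin{align*}
M_\varepsilon(\varrho,\bu,W)(t)&:=\langle \varrho\bu(t),\bfvarphi\rangle-\langle\varrho\bu(0),\bfvarphi\rangle-\int_0^t\langle\varrho\bu\otimes\bu,\nabla\bfvarphi\rangle\dif s\\
&\quad+\nu\int_0^t\langle\nabla\bu,\nabla\bfvarphi\rangle\dif s+(\lambda+\nu)\int_0^t\langle\Div\bu,\Div\bfvarphi\rangle\dif s\\
&\quad-\frac{1}{\varepsilon^2}\int_0^t\langle\varrho^\gamma,\Div\bfvarphi\rangle\dif s,
\end{align*}
together with the candidates
\begin{align*}
N_\varepsilon(\varrho,\bu)(t)&:=\sum_{k\geq 1}\int_0^t\langle\bfg_k(\varrho,\varrho\bu),\bfvarphi\rangle^2\dif s,\\
N_\varepsilon^k(\varrho,\bu)(t)&:=\int_0^t\langle\bfg_k(\varrho,\varrho\bu),\bfvarphi\rangle\dif s,
\end{align*}
for its quadratic variation and cross-variation with the $k$-th component of the Wiener process. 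On the original space, $M_\varepsilon(\varrho_\varepsilon,\bu_\varepsilon,W_\varepsilon) = \int_0^\cdot\langle\Phi(\varrho_\varepsilon,\varrho_\varepsilon\bu_\varepsilon)\dif W_\varepsilon,\bfvarphi\rangle$ is a square integrable $(\mf_t^\varepsilon)$-martingale with quadratic variation $N_\varepsilon(\varrho_\varepsilon,\bu_\varepsilon)$ and cross variations $N_\varepsilon^k(\varrho_\varepsilon,\bu_\varepsilon)$ with $\beta_k^\varepsilon$. For any bounded continuous $h$ on the restricted path space and any $0\leq s\leq t\leq T$, this yields
\begin{align*}
\mathbb E\bigl[h(\bfr_s\varrho_\varepsilon,\bfr_s\bu_\varepsilon,\bfr_sW_\varepsilon)\bigl(M_\varepsilon(t)-M_\varepsilon(s)\bigr)\bigr]&=0,\\
\mathbb E\bigl[h(\ldots)\bigl(M_\varepsilon(t)^2-M_\varepsilon(s)^2-[N_\varepsilon(t)-N_\varepsilon(s)]\bigr)\bigr]&=0,\\
\mathbb E\bigl[h(\ldots)\bigl(M_\varepsilon(t)\beta_k^\varepsilon(t)-M_\varepsilon(s)\beta_k^\varepsilon(s)-[N_\varepsilon^k(t)-N_\varepsilon^k(s)]\bigr)\bigr]&=0.
\end{align*}
These three identities are of the form $\mathbb E\,F(\varrho_\varepsilon,\bu_\varepsilon,W_\varepsilon)=0$ with $F$ a Borel measurable functional on $\mathcal X_\varrho\times\mathcal X_\bu\times\mathcal X_W$ which, by uniform integrability coming from \eqref{apv}--\eqref{estrhouu}, is $\mu^\varepsilon$-integrable. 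Hence by Proposition \ref{prop:skorokhod1}(a) they continue to hold with $(\tilde\varrho_\varepsilon,\tilde\bu_\varepsilon,\tilde W_\varepsilon)$ in place of $(\varrho_\varepsilon,\bu_\varepsilon,W_\varepsilon)$. This shows that $M_\varepsilon(\tilde\varrho_\varepsilon,\tilde\bu_\varepsilon,\tilde W_\varepsilon)$ is a continuous square integrable $(\tilde\mf_t^\varepsilon)$-martingale whose quadratic variation equals $N_\varepsilon(\tilde\varrho_\varepsilon,\tilde\bu_\varepsilon)$ and whose cross variation with $\tilde\beta_k^\varepsilon$ equals $N_\varepsilon^k(\tilde\varrho_\varepsilon,\tilde\bu_\varepsilon)$. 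By the martingale representation/identification lemma for stochastic integrals (see e.g. \cite{BrHo}) this forces
\begin{align*}
M_\varepsilon(\tilde\varrho_\varepsilon,\tilde\bu_\varepsilon,\tilde W_\varepsilon)(t)=\int_0^t\langle\Phi(\tilde\varrho_\varepsilon,\tilde\varrho_\varepsilon\tilde\bu_\varepsilon)\dif\tilde W_\varepsilon,\bfvarphi\rangle\quad\tilde\prst\text{-a.s.},
\end{align*}
which is exactly the required weak form of \eqref{eq2}, and completes the verification that the tilde tuple is a finite energy weak martingale solution with initial law $\Lambda_\varepsilon$.
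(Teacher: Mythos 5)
Your proof is correct and follows essentially the same strategy as the paper: transfer the continuity equation via a deterministic functional whose law is preserved, and transfer the momentum equation via the martingale/quadratic-variation functionals $M$, $N$, $N_k$ tested against continuous functionals of the past, followed by the standard martingale identification lemma. You spell out a few points the paper treats implicitly (the L\'evy-type verification that $\tilde W_\varepsilon$ is a Wiener process, the transfer of the initial law and of the energy inequality), but the core argument is the same.
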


\begin{proof}

The first part of the claim follows immediately form the fact that $\tilde W_\varepsilon$ has the same law as $W$. As a consequence, there exists a collection of mutually independent real-valued $(\tilde{\mf}_t)$-Wiener processes $(\tilde{\beta}^{\varepsilon}_k)_{k\geq1}$ such that $\tilde{W}_\varepsilon=\sum_{k\geq1}\tilde{\beta}^{\varepsilon}_k e_k$.

To show that the continuity equation \eqref{eq1} is satisfied, let us define, for all $t\in[0,T]$ and $\psi\in C^\infty(\mt)$, the functional
$$L(\rho,\bfq)_t=\langle\rho(t),\psi\rangle-\langle\rho(0),\psi\rangle-\int_0^t\langle\bfq,\nabla\psi\rangle\,\dif s.$$
Note that $(\rho,\bfq)\mapsto L(\rho,\bfq)_t$ is continuous on $\mathcal{X}_\varrho\times\mathcal{X}_{\varrho\bu}$. Hence the laws of $L(\varrho_\varepsilon,\varrho_\varepsilon\bfu_\varepsilon)_t$ and $L(\tilde\varrho_\varepsilon,\tilde\varrho_\varepsilon\tilde\bfu_\varepsilon)_t$ coincide and since $(\varrho_\varepsilon,\varrho_\varepsilon\bfu_\varepsilon)$ solves \eqref{eq1} we deduce that
\begin{align*}
\tilde\stred\big|L(\tilde\varrho_\varepsilon,\tilde\varrho_\varepsilon\tilde\bfu_\varepsilon)_t\big|^2=\stred\big|L(\varrho_\varepsilon,\varrho_\varepsilon\bfu_\varepsilon)_t\big|^2=0
\end{align*}
hence $(\tilde\varrho_\varepsilon,\tilde\varrho_\varepsilon\tilde\bfu_\varepsilon)$ solves \eqref{eq1}.

To verify the momentum equation \eqref{eq2}, we define for all $t\in[0,T]$ and $\bfvarphi\in C^\infty(\mt)$ the functionals
\begin{equation*}
\begin{split}
M(\rho,\bfv,\bfq)_t&=\big\langle\bfq(t),\bfvarphi\big\rangle-\big\langle \bfq(0),\bfvarphi\big\rangle+\int_0^t\big\langle\bfq\otimes\bfv,\nabla\bfvarphi\big\rangle\,\dif r-\nu\int_0^t\big\langle\nabla\bfv,\nabla\bfvarphi\big\rangle\,\dif r\\
&\quad-(\lambda+\nu)\int_0^t\big\langle\divergence\bfv,\divergence\bfvarphi\big\rangle\,\dif r+\frac{a}{\varepsilon^2}\int_0^t\big\langle\rho^\gamma,\divergence\bfvarphi\big\rangle\,\dif r\\
N(\rho,\bfq)_t&=\sum_{k\geq1}\int_0^t\big\langle \bfg_k(\rho, \bfq ),\bfvarphi\big\rangle^2\,\dif r,\\
N_k(\rho,\bfq)_t&=\int_0^t\big\langle \bfg_k(\rho,\bfq),\bfvarphi\big\rangle\,\dif r,
\end{split}
\end{equation*}
let $M(\rho,\bfv,\bfq)_{s,t}$ denote the increment $M(\rho,\bfv,\bfq)_{t}-M(\rho,\bfv,\bfq)_{s}$ and similarly for $N(\rho,\bfq)_{s,t}$ and $N_k(\rho,\bfq)_{s,t}$.
We claim that with the above uniform estimates in hand, the mappings
$$(\rho,\bfv,\bfq)\mapsto M(\rho,\bfv,\bfq)_t,\quad\,(\rho,\bfv,\bfq)\mapsto N(\rho,\bfq)_t,\,\quad (\rho,\bfv,\bfq)\mapsto N_k(\rho,\bfq)_t$$
are well-defined and measurable on a subspace of $\mathcal{X}_\varrho\times\mathcal{X}_\bu\times\mathcal{X}_{\varrho\bfu}$ where the joint law of $(\tilde\varrho,\tilde\bfu,\tilde\varrho\tilde\bfu)$ is supported, i.e. where all the uniform estimates hold true.
Indeed, in the case of $N(\rho,\bfq)_t$ we have by \eqref{growth1-}, \eqref{growth1} similarly to \eqref{stochest}
\begin{align*}
\sum_{k\geq 1}\int_0^t\big\langle \bfg_k(\rho,\bfq),\varphi\big\rangle^2\,\dif s&\leq C\sum_{k\geq 1}\int_0^t\|\, \bfg_k(\rho,\bfq)\|_{L^1}^2\,\dif s\leq C.
%&\leq C\int_0^t\int_{\mt}\Big(\rho +\rho^{\gamma}+\frac{|\bfq|^2}{\rho}\Big)\,\dif x\,\dif s
\end{align*}
%which is finite due to \eqref{aprho1} and \eqref{est:rhobfu2}.
$M(\rho,\bfv,\bfq)$ and $N_k(\rho,\bfv)_t\,$ can be handled similarly and therefore, the following random variables have the same laws
\begin{align*}
M(\varrho_\varepsilon,\bu_\varepsilon,\varrho_\varepsilon\bfu_\varepsilon)&\distr M(\tilde\varrho_\varepsilon,\tilde\bu_\varepsilon,\tilde\varrho_\varepsilon\tilde\bfu_\varepsilon),\\
N(\varrho_\varepsilon,\varrho_\varepsilon\bfu_\varepsilon)&\distr N(\tilde\varrho_\varepsilon,\tilde\varrho_\varepsilon\tilde\bfu_\varepsilon),\\
N_k(\varrho_\varepsilon,\varrho_\varepsilon\bfu_\varepsilon)&\distr N_k(\tilde\varrho_\varepsilon,\tilde\varrho_\varepsilon\tilde\bfu_\varepsilon).
\end{align*}

Let us now fix times $s,t\in[0,T]$ such that $s<t$ and let
$$h:\mathcal{X}_\varrho|_{[0,s]}\times\mathcal{X}_{\bfu}|_{[0,s]}\times\mathcal{X}_W|_{[0,s]}\rightarrow [0,1]$$
be a continuous function.
Since
$$M(\varrho_\varepsilon,\bu_\varepsilon,\varrho_\varepsilon\bfu_\varepsilon)_t=\int_0^t\big\langle\varPhi(\varrho_\varepsilon,\varrho_\varepsilon\bu_\varepsilon)\,\dif W,\bfvarphi\big\rangle=\sum_{k\geq1}\int_0^t\big\langle \bfg_k(\varrho_\varepsilon,\varrho_\varepsilon\bu_\varepsilon),\bfvarphi\big\rangle\,\dif\beta_k$$
is a square integrable $(\mf_t)$-martingale, we infer that
$$\big[M(\varrho_\varepsilon,\bu_\varepsilon,\varrho_\varepsilon\bfu_\varepsilon)\big]^2-N(\varrho_\varepsilon,\varrho_\varepsilon\bfu_\varepsilon),\quad M(\varrho_\varepsilon,\bu_\varepsilon,\varrho_\varepsilon\bfu_\varepsilon)\beta_k-N_k(\varrho_\varepsilon,\varrho_\varepsilon\bfu_\varepsilon)$$
are $(\mf_t)$-martingales.
Besides, it follows from the equality of laws that
\begin{equation}\label{exp11}
\begin{split}
&\tilde{\stred}\,h\big(\bfr_s\tilde\varrho_\varepsilon, \bfr_s\tilde{\bu}_\varepsilon,\bfr_s\tilde{W}_\varepsilon\big)\big[M(\tilde\varrho_\varepsilon,\tilde\bu_\varepsilon,\tilde\varrho_\varepsilon\tilde\bfu_\varepsilon)_{s,t}\big]\\
&=\stred\,h\big(\bfr_s\varrho_\varepsilon, \bfr_s\bu_\varepsilon, \bfr_s W_\varepsilon\big)\big[M(\varrho_\varepsilon,\bu_\varepsilon,\varrho_\varepsilon\bfu_\varepsilon)_{s,t}\big]=0,
\end{split}
\end{equation}
\begin{equation}\label{exp21}
\begin{split}
&\tilde{\stred}\,h\big(\bfr_s\tilde\varrho_\varepsilon, \bfr_s\tilde{\bu}_\varepsilon,\bfr_s\tilde{W}_\varepsilon\big)\bigg[[M(\tilde\varrho_\varepsilon,\tilde\bu_\varepsilon,\tilde\varrho_\varepsilon\tilde\bfu_\varepsilon)^2]_{s,t}-N(\tilde\varrho_\varepsilon,\tilde\varrho_\varepsilon\tilde\bfu_\varepsilon)_{s,t}\bigg]\\
&=\stred\,h\big(\bfr_s\varrho_\varepsilon, \bfr_s\bu_\varepsilon, \bfr_s W_\varepsilon\big)\bigg[[M(\varrho_\varepsilon,\bu_\varepsilon,\varrho_\varepsilon\bfu_\varepsilon)^2]_{s,t}-N(\varrho_\varepsilon,\varrho_\varepsilon\bfu_\varepsilon)_{s,t}\bigg]=0,
\end{split}
\end{equation}
\begin{equation}\label{exp31}
\begin{split}
&\tilde{\stred}\,h\big(\bfr_s\tilde\varrho_\varepsilon, \bfr_s\tilde{\bu}_\varepsilon,\bfr_s\tilde{W}_\varepsilon\big)\bigg[[M(\tilde\varrho_\varepsilon,\tilde\bu_\varepsilon,\tilde\varrho_\varepsilon\tilde\bfu_\varepsilon)\tilde{\beta}_k^\varepsilon]_{s,t}-N_k(\tilde\varrho_\varepsilon,\tilde\varrho_\varepsilon\tilde\bfu_\varepsilon)_{s,t}\bigg]\\
&=\stred\,h\big(\bfr_s\varrho_\varepsilon, \bfr_s\bu_\varepsilon, \bfr_s W_\varepsilon\big)\bigg[[M(\varrho_\varepsilon,\bu_\varepsilon,\varrho_\varepsilon\bfu_\varepsilon)\beta_k]_{s,t}-N_k(\varrho_\varepsilon,\varrho_\varepsilon\bfu_\varepsilon)_{s,t}\bigg]=0.
\end{split}
\end{equation}
The proof is hereby complete.
\end{proof}

Consequently, we recover the result of Proposition \ref{prop:apriori} together with all the uniform estimates of the previous subsection. In particular, we find (for a subsequence) that
\begin{equation}\label{conv:rho2}
\tilde\varrho_\varepsilon\rightarrow 1\quad\text{in}\quad L^\infty(0,T;L^\gamma(\mt))\quad\tilde\prst\text{-a.s.}
\end{equation}
Due to Corollary \ref{prop:new} we have the following bounds on the new probability space.
\begin{corollary}
\label{cor:limit1}
We have the following bounds uniform in $\varepsilon$, for all $p\in [1,\infty)$ and $l>\frac{N}{2}$,
\begin{align*}
\sqrt{\tilde\varrho_\varepsilon}\tilde\bfu_\varepsilon&\in L^p(\Omega,L^\infty(0,T;L^{2}(\mt))),\\
\tilde\varphi_\varepsilon&\in L^p(\Omega,L^\infty(0,T;L^{\min(2,\gamma)}(\mt))),\\
%\tilde\bfu_\varepsilon&\in L^p(\Omega,L^2(0,T;W^{1,2}(\mt)))\\
%\tilde\varrho_\varepsilon\tilde\bfu_\varepsilon&\in L^p(\Omega,L^\infty(0,T;L^{\frac{2\gamma}{\gamma+1}}(\mt))),\\
\tilde\bfF_\varepsilon&\in L^p(0,T;L^1(0,T;W^{-l,2}(\mt)))
\end{align*}
where $\tilde\varphi_\varepsilon=\tfrac{\tilde\varrho_\varepsilon-1}{\varepsilon}$ and
\begin{align*}
\tilde\bfF_\varepsilon= \nu \Delta \mathcal Q \bfu_\varepsilon + (\lambda+\nu)\nabla\divergence\tilde\bfu_\varepsilon-\mathcal Q[\divergence(\tilde\varrho_\varepsilon\tilde\bfu_\varepsilon\otimes\tilde\bfu_\varepsilon)]-&\frac{1}{\varepsilon^2}\nabla[\tilde\varrho_\varepsilon^\gamma
-1-\gamma(\tilde\varrho_\varepsilon-1)].
\end{align*}
\end{corollary}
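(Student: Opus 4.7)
The plan is to simply transfer the bounds from Section~\ref{UB} to the new probability space, using the key observation that Proposition~\ref{prop:limit1} asserts the tilded triple $(\tilde\varrho_\varepsilon,\tilde\bfu_\varepsilon,\tilde W_\varepsilon)$ is \emph{itself} a finite energy weak martingale solution to \eqref{eq:} with initial law $\Lambda_\varepsilon$. Consequently, the statement of Proposition~\ref{prop:apriori} applies verbatim on $(\tilde\Omega,\tilde\mf,(\tilde\mf_t^\varepsilon),\tilde\prst)$, and the uniform-in-$\varepsilon$ constant $C_p$ is the same as before since it depends only on the bounds on the initial law $\Lambda_\varepsilon$ and on the structural assumptions \eqref{growth1-}--\eqref{growth2}, none of which change under Skorokhod's representation.

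From there, the three bounds follow exactly as in Corollary~\ref{prop:new} and in the discussion leading to \eqref{apphi} and \eqref{eq:F}. The bound on $\sqrt{\tilde\varrho_\varepsilon}\tilde\bfu_\varepsilon$ in $L^p(\tilde\Omega;L^\infty(0,T;L^2(\mt)))$ is a direct reading of the first term on the left-hand side of the tilded version of \eqref{apriori}. The bound on $\tilde\varphi_\varepsilon$ in $L^p(\tilde\Omega;L^\infty(0,T;L^{\min(2,\gamma)}(\mt)))$ is obtained by applying Lemma~\ref{lem1} to the essential/residual decomposition of $(\tilde\varrho_\varepsilon-1)/\varepsilon$: on the essential set $P(\tilde\varrho_\varepsilon)\simeq|\tilde\varrho_\varepsilon-1|^2$ gives the $L^2$-control, while on the residual set the coercivity $P(\tilde\varrho_\varepsilon)\geq C_3\tilde\varrho_\varepsilon^\gamma$ combined with the $\varepsilon^{-2}$ factor produces the $L^\gamma$-control.

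For $\tilde\bfF_\varepsilon$, we bound each of the four pieces in the space $L^1(0,T;W^{-l,2}(\mt))$ with $l>N/2$. The viscous terms $\nu\Delta\mathcal Q\tilde\bfu_\varepsilon+(\lambda+\nu)\nabla\divergence\tilde\bfu_\varepsilon$ are controlled in $L^2(0,T;W^{-1,2}(\mt))$ by the $W^{1,2}$-bound on $\tilde\bfu_\varepsilon$ and continuity of $\mathcal Q$. The convective term is handled via \eqref{estrhouu} and the embedding $L^1(\mt)\hookrightarrow W^{-l,2}(\mt)$, which is why one needs $l>N/2$. For the pressure term $\varepsilon^{-2}\nabla[\tilde\varrho_\varepsilon^\gamma-1-\gamma(\tilde\varrho_\varepsilon-1)]$, the singular prefactor is cancelled by the Taylor expansion $\varrho^\gamma-1-\gamma(\varrho-1)=O(|\varrho-1|^2)+O(\varrho^\gamma)$: on the essential set this quantity is $\le C|\tilde\varrho_\varepsilon-1|^2$, giving the required $L^1$-in-$x$ bound in view of the bound on $\tilde\varphi_\varepsilon$, and on the residual set $(1-\chi)\tilde\varrho_\varepsilon^\gamma$ is controlled by Lemma~\ref{lem1}(iii) together with the tilded version of the pressure-potential bound from \eqref{apriori}, in which the prefactor $1/\varepsilon^2$ is already built in.

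There is no genuine obstacle here: the corollary is essentially bookkeeping that makes the subsequent limit analysis on $(\tilde\Omega,\tilde\mf,\tilde\prst)$ possible. The only conceptual point worth stating explicitly is that one is free to choose either of two equivalent routes, namely (i) transferring the original-space bounds by equality of laws, since each norm appearing is a Borel-measurable functional of the path variables, or (ii) re-running the proof of Proposition~\ref{prop:apriori} on $(\tilde\Omega,\tilde\mf,(\tilde\mf_t^\varepsilon),\tilde\prst)$ using Proposition~\ref{prop:limit1}; both give the same uniform constants.
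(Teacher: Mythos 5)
Your argument is correct and follows the paper's route exactly: Proposition~\ref{prop:limit1} asserts that the tilded triple is itself a finite energy weak martingale solution with initial law $\Lambda_\varepsilon$, so Proposition~\ref{prop:apriori} together with Corollary~\ref{prop:new}, the bound \eqref{apphi}, and the reasoning behind \eqref{eq:F} transfer verbatim to $(\tilde\Omega,\tilde\mf,\tilde\prst)$. One small point: for the convective term the embedding $L^1(\mt)\hookrightarrow W^{-l',2}(\mt)$ needs $l'>N/2$, but applying $\divergence$ costs one further derivative, so the $\tilde\bfF_\varepsilon$ bound in fact requires $l>N/2+1$ as in \eqref{eq:F} --- the threshold $l>N/2$ in the corollary statement appears to be a misprint that your proof repeats rather than corrects.
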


\begin{proposition}\label{prop:limitnew}
We have the following convergence $\tilde\p$-a.s.
\begin{align}\label{conv:newu}
\mathcal P_H\tilde\bfu_\varepsilon &\rightarrow\tilde\bfu\quad\text{in}\quad L^2(0,T;L^{q}(\mt))\quad\forall q<\tfrac{2N}{N-2}.
\end{align}
\end{proposition}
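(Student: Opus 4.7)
The plan is to decompose $\mathcal P_H\tilde\bfu_\varepsilon$ into a momentum-like piece $\mathcal P_H(\tilde\varrho_\varepsilon\tilde\bfu_\varepsilon)$, which inherits good time regularity from the stochastic momentum equation, and a lower-order correction $\mathcal P_H((\tilde\varrho_\varepsilon-1)\tilde\bfu_\varepsilon)$ which vanishes thanks to \eqref{conv:rho2}; the resulting strong convergence in a negative Sobolev norm will then be upgraded to $L^2(0,T;L^q)$ by interpolation against the uniform $L^2(0,T;W^{1,2})$ bound. For the correction I would combine \eqref{conv:rho2} with the Sobolev embedding $W^{1,2}\hookrightarrow L^{q'}(\mt)$ for a fixed $q'<\tfrac{2N}{N-2}$ and the $L^2(0,T;W^{1,2})$-bound on $\tilde\bfu_\varepsilon$ inherited from \eqref{apv} via Proposition \ref{prop:limit1}. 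H\"older's inequality then gives
$$\|(\tilde\varrho_\varepsilon-1)\tilde\bfu_\varepsilon\|_{L^2(0,T;L^s(\mt))}\le\|\tilde\varrho_\varepsilon-1\|_{L^\infty(0,T;L^\gamma)}\,\|\tilde\bfu_\varepsilon\|_{L^2(0,T;L^{q'})}\longrightarrow 0 \quad\tilde\prst\text{-a.s.},$$
with $\tfrac{1}{s}=\tfrac{1}{\gamma}+\tfrac{1}{q'}$ and $s>1$ provided $q'$ is taken sufficiently close to $\tfrac{2N}{N-2}$. Continuity of $\mathcal P_H$ on $L^s(\mt)$ together with $L^s(\mt)\hookrightarrow W^{-l,2}(\mt)$ for $l$ large then yields $\mathcal P_H((\tilde\varrho_\varepsilon-1)\tilde\bfu_\varepsilon)\to 0$ in $L^2(0,T;W^{-l,2}(\mt))$ $\tilde\prst$-a.s.

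For the momentum piece, Proposition \ref{prop:limit1} guarantees that $(\tilde\varrho_\varepsilon,\tilde\bfu_\varepsilon,\tilde W_\varepsilon)$ is a weak martingale solution of \eqref{eq:} on the new probability space, so the argument of Proposition \ref{rhoutight1} transfers verbatim and produces, for some $l>\tfrac{N}{2}+1$ and $\kappa\in(0,\tfrac{1}{2})$, the uniform bound
$$\tilde{\stred}\,\bigl\|\mathcal P_H(\tilde\varrho_\varepsilon\tilde\bfu_\varepsilon)\bigr\|_{C^\kappa([0,T];W^{-l,2}(\mt))}\le C.$$
Combining this equicontinuity with the $\tilde\prst$-a.s.\ convergence $\tilde\bq_\varepsilon = \mathcal P_H(\tilde\varrho_\varepsilon\tilde\bfu_\varepsilon)\to\tilde\bq$ in $C_w([0,T];L^{\frac{2\gamma}{\gamma+1}}(\mt))$ from Proposition \ref{prop:skorokhod1} and the compact embedding $L^{\frac{2\gamma}{\gamma+1}}(\mt)\hookrightarrow W^{-l,2}(\mt)$, a pathwise Arzel\`a--Ascoli argument (up to a further subsequence and an exceptional null set) will upgrade the Skorokhod convergence to
$$\mathcal P_H(\tilde\varrho_\varepsilon\tilde\bfu_\varepsilon)\longrightarrow \tilde\bq \quad\text{in }C([0,T];W^{-l,2}(\mt)) \quad\tilde\prst\text{-a.s.}$$
To identify $\tilde\bq$, I would use the decomposition $\tilde\varrho_\varepsilon\tilde\bfu_\varepsilon=\tilde\bfu_\varepsilon+(\tilde\varrho_\varepsilon-1)\tilde\bfu_\varepsilon$ together with the strong-times-weak convergence $\tilde\varrho_\varepsilon\to 1$ and $\tilde\bfu_\varepsilon\rightharpoonup\tilde\bfu$ to conclude $\tilde\varrho_\varepsilon\tilde\bfu_\varepsilon\to\tilde\bfu$ in the sense of distributions, and pass to the limit in the continuity equation to obtain $\divergence\tilde\bfu=0$; hence $\tilde\bq=\mathcal P_H\tilde\bfu=\tilde\bfu$.

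Putting the two steps together will yield $\mathcal P_H\tilde\bfu_\varepsilon\to\tilde\bfu$ in $L^2(0,T;W^{-l,2}(\mt))$ $\tilde\prst$-a.s., while $\mathcal P_H$ being bounded on $W^{1,2}$ together with \eqref{apv} transferred via Proposition \ref{prop:limit1} gives the uniform bound $\mathcal P_H\tilde\bfu_\varepsilon\in L^2(0,T;W^{1,2}(\mt))$ $\tilde\prst$-a.s. Fixing $q<\tfrac{2N}{N-2}$ and choosing $\theta\in(0,1)$ small enough that $s:=(1-\theta)-l\theta$ satisfies $s>N\bigl(\tfrac{1}{2}-\tfrac{1}{q}\bigr)$ secures the embedding $W^{s,2}(\mt)\hookrightarrow L^q(\mt)$, and then the Sobolev interpolation inequality $\|v\|_{W^{s,2}}\le C\|v\|_{W^{1,2}}^{1-\theta}\|v\|_{W^{-l,2}}^{\theta}$ together with H\"older's inequality in time yields
$$\|\mathcal P_H\tilde\bfu_\varepsilon-\tilde\bfu\|_{L^2(0,T;L^q(\mt))}\le C\,\|\mathcal P_H\tilde\bfu_\varepsilon-\tilde\bfu\|_{L^2(0,T;W^{1,2})}^{1-\theta}\,\|\mathcal P_H\tilde\bfu_\varepsilon-\tilde\bfu\|_{L^2(0,T;W^{-l,2})}^{\theta},$$
whose right-hand side tends to zero $\tilde\prst$-a.s., completing the proof. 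The hard part will be the second paragraph: transferring the stochastic H\"older estimate to $(\tilde\Omega,\tilde\mf,\tilde\prst)$ (which is legitimate only once Proposition \ref{prop:limit1} has reconstructed the It\^o integral there) and then converting the Skorokhod $C_w$-convergence into uniform-in-time convergence in $W^{-l,2}$ via the compact embedding combined with the equicontinuity.
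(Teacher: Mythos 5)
Your argument is correct, and it reaches the same conclusion by a genuinely different intermediate mechanism. The paper's proof avoids the explicit decomposition $\mathcal P_H\tilde\bfu_\varepsilon=\mathcal P_H(\tilde\varrho_\varepsilon\tilde\bfu_\varepsilon)-\mathcal P_H((\tilde\varrho_\varepsilon-1)\tilde\bfu_\varepsilon)$ that you take as a starting point. Instead, after establishing \eqref{conv:Prhou} and \eqref{conv:Pu} exactly as you do (and passing through the continuity equation to identify $\tilde\bfq$ with $\tilde\bfu$), the paper multiplies and integrates: using that $\mathcal P_H$ is an $L^2$-orthogonal projection,
\begin{equation*}
\int_Q \bigl(|\mathcal P_H\tilde\bfu_\varepsilon|^2-\mathcal P_H(\tilde\varrho_\varepsilon\tilde\bfu_\varepsilon)\cdot\mathcal P_H\tilde\bfu_\varepsilon\bigr)\,\dif x\,\dif t
\end{equation*}
is bounded by $\|\tilde\varrho_\varepsilon-1\|_{L^\infty(L^\gamma)}\|\tilde\bfu_\varepsilon\|^2_{L^2(L^s)}$ with $s=\frac{2\gamma}{\gamma-1}<\frac{2N}{N-2}$, which vanishes. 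Combined with $\mathcal P_H(\tilde\varrho_\varepsilon\tilde\bfu_\varepsilon)\cdot\mathcal P_H\tilde\bfu_\varepsilon\rightharpoonup|\tilde\bfu|^2$ in $L^1(Q)$, this gives $\|\mathcal P_H\tilde\bfu_\varepsilon\|_{L^2(L^2)}\to\|\tilde\bfu\|_{L^2(L^2)}$; norm convergence plus weak convergence then yields strong convergence in $L^2(0,T;L^2)$, and the last interpolation with the bounded $L^2(W^{1,2})$ norm is the same as yours. So the paper upgrades weak to strong convergence via convergence of $L^2$ norms, while you instead obtain strong convergence in $L^2(0,T;W^{-l,2})$ directly from the two summands and interpolate from there. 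Your route is perhaps slightly more direct in that it avoids the duality/projection identity, at the cost of a less canonical intermediate space; the final interpolation-against-$W^{1,2}$ step is shared. A small remark: you do not actually need the H\"older bound and Arzel\`a--Ascoli for the momentum piece. The almost-sure $C_w([0,T];L^{\frac{2\gamma}{\gamma+1}})$-convergence from Proposition \ref{prop:skorokhod1}, together with the uniform-boundedness principle (which supplies the pathwise $L^\infty(0,T;L^{\frac{2\gamma}{\gamma+1}})$ bound) and the compact embedding, already gives $C([0,T];W^{-l,2})$-convergence, since on bounded sets of $L^{\frac{2\gamma}{\gamma+1}}$ the weak topology is metrized by the $W^{-l,2}$ norm. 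This is also all the paper uses for \eqref{conv:Prhou}; the estimate \eqref{eq:holderZ} there is recorded but not invoked for this passage.
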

\begin{proof}
Since the joint laws of $(\varrho_\varepsilon,\bfu_\varepsilon,\mathcal P_H(\varrho_\varepsilon\bfu_\varepsilon))$ and $(\tilde\varrho_\varepsilon,\tilde\bfu_\varepsilon,\tilde\bfq_\varepsilon)$ coincide, we deduce that $\tilde\bfq_\varepsilon=\mathcal P_H(\tilde\varrho_\varepsilon\tilde\bfu_\varepsilon)$ a.s. and consequently it follows from the proof of Proposition \ref{rhoutight1} that
\begin{equation}\label{eq:holderZ}
\tilde\stred\|\mathcal P_H(\tilde\varrho_\varepsilon\tilde{\bfu}_\varepsilon)\|_{C^\kappa([0,T];W^{-l,2}(\mt))}\leq C
\end{equation}
for some $\kappa\in(0,1)$ and $l\in\N$.

Besides, it follows from \eqref{conv:rho2} and the convergence of $\tilde\bfu_\varepsilon$ to $\tilde\bfu$ that
\begin{align}\label{conv:rhou}
\tilde\varrho_\varepsilon\tilde\bfu_\varepsilon \rightharpoonup\tilde\bfu\quad\text{in}\quad L^2(0,T;L^\frac{2\gamma}{\gamma+1}(\mt))\quad\tilde\prst\text{-a.s}.
\end{align}
If we pass to the limit in the continuity equation, we see that $\divergence\tilde\bfu=0$,
which in turn identifies $\tilde\bfq$ with $\tilde\bfu$. Indeed, due to continuity of $\mathcal{P}$ we obtain
\begin{align*}
\mathcal P_H(\tilde\varrho_\varepsilon\tilde\bfu_\varepsilon) &\rightharpoonup\tilde\bfu\quad\text{in}\quad L^2(0,T;L^\frac{2\gamma}{\gamma+1}(\mt))\quad\tilde\prst\text{-a.s}.
\end{align*}
Thus with Proposition \ref{prop:skorokhod1} and the compact embedding $L^{\frac{2\gamma}{\gamma+1}}(\mt)\overset{c}{\hookrightarrow} W^{-1,2}(\mt)$
\begin{align}
\label{conv:Prhou}
\mathcal P_H(\tilde\varrho_\varepsilon\tilde\bfu_\varepsilon) &\rightarrow\tilde\bfu\quad\text{in}\quad L^2(0,T;W^{-1,2}(\mt))\quad\tilde\prst\text{-a.s}.
\end{align}
Since
\begin{align}\label{conv:divu}
\divergence\tilde\bfu_\varepsilon\rightharpoonup0\quad\text{in}\quad L^2(0,T;L^2(\mt))\quad\tilde\prst\text{-a.s}.
\end{align}
we have also that
\begin{align}\label{conv:Pu}
\mathcal P_H\tilde\bfu_\varepsilon &\rightharpoonup\tilde\bfu\quad\text{in}\quad L^2(0,T;W^{1,2}(\mt))\quad\tilde\prst\text{-a.s}.
\end{align}
Note that \eqref{conv:divu} is a consequence of $\diver\tilde\bfu=0$ and the $\tilde\prst$-a.s. convergence $\tilde\bfu_\varepsilon\rightharpoonup \tilde\bfu$ in $L^2(0,T;W^{1,2}(\mt))$, c.f. Proposition \ref{prop:skorokhod1}.
Combining \eqref{conv:Prhou} with \eqref{conv:Pu} we conclude that
\begin{align*}%\label{conv:newu}
\mathcal P_H(\tilde\varrho_\varepsilon\tilde{\bfu}_\varepsilon)\cdot\mathcal P_H\tilde\bfu_\varepsilon &\rightharpoonup|\tilde\bfu|^2\quad\text{in}\quad L^1(Q)\quad\tilde\prst\text{-a.s}.
\end{align*}
Using Proposition \ref{prop:skorokhod1} yields $\tilde{\p}$-a.s.
\begin{align*}
\Big|\int_Q \big(|\mathcal P_H\tilde{\bfu}_\varepsilon|^2-\mathcal P_H(\tilde\varrho_\varepsilon\tilde{\bfu}_\varepsilon)\cdot\mathcal P_H\tilde\bfu_\varepsilon\big)\dxt\Big|&\leq \|\tilde\varrho_\varepsilon-1\|_{L^\infty(0,T;L^\gamma)}\|\tilde\bfu_\varepsilon\|^2_{L^2(0,T;L^s)}\\
&\longrightarrow 0,
\end{align*}
where $s=\frac{2\gamma}{\gamma-1}<\frac{2N}{N-2}$. This implies $\|\mathcal P_H\tilde\bfu_\varepsilon\|_2\rightarrow \|\tilde\bfu\|_2$ and hence
\begin{align*}%\label{conv:newu}
\mathcal P_H\tilde\bfu_\varepsilon &\rightarrow\tilde\bfu\quad\text{in}\quad L^2(0,T;L^{2}(\mt)).
\end{align*}
Combining this with weak convergence in $L^2(0,T;W^{1,2}(\mt))$ (recall Proposition \ref{prop:skorokhod1}) yields
 the claim.
\end{proof}

\iffalse
\begin{lemma}\label{lem:strongq}
We have for all $q<\tfrac{2\gamma}{\gamma+1}$
$$\tilde\varrho_\varepsilon\tilde{\bfu}_\varepsilon\rightarrow\tilde{\bfu}\qquad\text{in}\qquad L^q(\tilde\Omega\times Q).$$
\end{lemma}
\begin{proof}
According to \eqref{aprhov}, \eqref{aprho}, \eqref{conv:rho2} and Proposition \ref{prop:skorokhod1}, \eqref{apv}, it follows (up to a subsequence) that
$$\sqrt{\tilde\varrho_\varepsilon}\tilde\bfu_\varepsilon\rightharpoonup \tilde\bfu\quad\text{in} \quad L^2(\tilde\Omega\times Q).$$
Besides, similar to the proof of \eqref{conv:newu} we have
\begin{align*}
\tilde\E\int_Q \tilde\varrho_\varepsilon|\tilde\bfu_\varepsilon|^2\dxt=\tilde\E\int_Q \tilde\varrho_\varepsilon\tilde\bfu_\varepsilon\cdot\tilde\bfu_\varepsilon\dxt\rightarrow \tilde\E\int_Q \tilde\bfu\cdot\tilde\bfu\dxt,
\end{align*}
which is the convergence of the corresponding $L^2$-norms. Thus
$$\sqrt{\tilde\varrho_\varepsilon}\tilde\bfu_\varepsilon\rightarrow \tilde\bfu\quad\text{in}\quad L^2(\tilde\Omega\times Q).$$
Combining this with \eqref{conv:rho2} and \eqref{estrhou2} implies the claim.
\end{proof}

\fi

In the following we aim to identify the limit in the gradient part of the convective term. To this end, we adopt the deterministic approach proposed by Lions and Masmoudi \cite{LiMa2}. We introduce the dual space
\[
W_{\divergence}^{-l,2}(\mt) \equiv \left[ W_{\divergence}^{l,2}(\mt) \right]^*.
\]
In particular, two elements of $W_{\divergence}^{-l,2}(\mt)$ are identical if their difference is a gradient.

\iffalse
We write
\begin{align}\label{eq:24}
\begin{pmatrix} \tilde\varphi_\varepsilon \\ \mathcal Q(\tilde\varrho_\varepsilon\tilde\bfu_\varepsilon) \end{pmatrix}=\mathcal S\Big(\frac{t}{\varepsilon}\Big)\begin{pmatrix} \tilde\psi \\ \tilde\bfm \end{pmatrix}+\tilde\bfr_\varepsilon,\quad \tilde\bfr_\varepsilon =\mathcal S\Big(\frac{t}{\varepsilon}\Big)\begin{pmatrix} \tilde\psi_\varepsilon-\tilde\psi \\ \tilde\bfm_\varepsilon-\tilde\bfm \end{pmatrix}.
\end{align}
Using again that $\mathcal S$ is an isometry we get
\begin{align}\label{eq:convre}
\tilde\bfr_\varepsilon\rightarrow0\quad \text{in}\quad L^2(0,T;W^{-1,2}(\mt))\quad\tilde\p\text{-a.s.}
\end{align}
as a consequence of Proposition \ref{prop:skorokhod1}.
We introduce the function
\begin{align*}
 \tilde\bfv_\varepsilon =\mathcal S_2\Big(\frac{t}{\varepsilon}\Big)\begin{pmatrix} \tilde\psi \\ \tilde\bfm \end{pmatrix}
\end{align*}
which turns out to be very important for the following limit.
\fi

\begin{proposition}\label{conv:convect}
For $l>\tfrac{N}{2}$ we have $\tilde\p$-a.s.
\begin{align*}
\divergence(\tilde\varrho_\varepsilon\tilde\bfu_\varepsilon\otimes\tilde\bfu_\varepsilon)
\rightharpoonup \divergence(\tilde\bfu\otimes\tilde\bfu)\quad\text{in}\quad L^1(0,T;W_{\divergence}^{-l,2}(\mt)).
\end{align*}
\end{proposition}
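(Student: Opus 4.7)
The plan is to exploit the fact that elements of $W^{-l,2}_{\divergence}(\mt)$ are identified modulo gradients: it is enough to show that for every fixed $\bfvarphi\in C^\infty_{\divergence}(\mt)$ and every $\chi\in C^\infty_c(0,T)$ one has
\[
\int_0^T\chi(t)\int_\mt \tilde\varrho_\varepsilon\tilde\bfu_\varepsilon\otimes\tilde\bfu_\varepsilon:\nabla\bfvarphi\,\dxt\longrightarrow\int_0^T\chi(t)\int_\mt \tilde\bfu\otimes\tilde\bfu:\nabla\bfvarphi\,\dxt,\qquad \tilde\p\text{-a.s.}
\]
Writing $\tilde\varrho_\varepsilon=1+\varepsilon\tilde\varphi_\varepsilon$ and invoking the bound on $\tilde\varphi_\varepsilon$ from Corollary~\ref{cor:limit1} together with Sobolev embedding and $\gamma>N/2$, the factor $(\tilde\varrho_\varepsilon-1)\tilde\bfu_\varepsilon\otimes\tilde\bfu_\varepsilon$ is an $O(\varepsilon)$ remainder, so one is reduced to studying $\tilde\bfu_\varepsilon\otimes\tilde\bfu_\varepsilon$. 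I would then apply the Helmholtz splitting $\tilde\bfu_\varepsilon=\bfv_\varepsilon+\bfw_\varepsilon$ with $\bfv_\varepsilon=\mathcal P_H\tilde\bfu_\varepsilon$, $\bfw_\varepsilon=\mathcal Q\tilde\bfu_\varepsilon$, yielding four tensor-product pieces.

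Three of these pieces contain at least one factor that converges strongly thanks to Proposition~\ref{prop:limitnew}: $\bfv_\varepsilon\to\tilde\bfu$ in $L^2(0,T;L^q(\mt))$ for any $q<\tfrac{2N}{N-2}$, combined with the weak convergence $\bfw_\varepsilon\rightharpoonup 0$ in $L^2(0,T;W^{1,2}(\mt))$ (a consequence of Proposition~\ref{prop:skorokhod1} and $\divergence\tilde\bfu=0$), identifies the limits of $\bfv_\varepsilon\otimes\bfv_\varepsilon$, $\bfv_\varepsilon\otimes\bfw_\varepsilon$, $\bfw_\varepsilon\otimes\bfv_\varepsilon$ in $L^1(Q)$ as $\tilde\bfu\otimes\tilde\bfu$, $0$, $0$. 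The only delicate piece is $\bfw_\varepsilon\otimes\bfw_\varepsilon$, where both factors converge only weakly because of the acoustic oscillations. Writing $\bfw_\varepsilon=\nabla\Psi_\varepsilon$ and applying the identity
\[
\divergence(\nabla\Psi_\varepsilon\otimes\nabla\Psi_\varepsilon)=\tfrac{1}{2}\nabla|\bfw_\varepsilon|^2+(\divergence\bfw_\varepsilon)\bfw_\varepsilon,
\]
the first term is a pure gradient and vanishes after pairing with the solenoidal $\bfvarphi$, so it remains to show that
\[
I_\varepsilon(\bfvarphi,\chi):=\int_0^T\chi(t)\int_\mt(\divergence\bfw_\varepsilon)(\bfw_\varepsilon\cdot\bfvarphi)\,\dxt\longrightarrow 0.
\]

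For $I_\varepsilon$ I would replace $\bfw_\varepsilon$ by $\bfW_\varepsilon:=\mathcal Q(\tilde\varrho_\varepsilon\tilde\bfu_\varepsilon)$, the difference $\bfw_\varepsilon-\bfW_\varepsilon=-\varepsilon\mathcal Q(\tilde\varphi_\varepsilon\tilde\bfu_\varepsilon)$ being $O(\varepsilon)$. The continuity equation \eqref{eq:approximation21} gives $\divergence\bfW_\varepsilon=-\varepsilon\partial_t\tilde\varphi_\varepsilon$, so one has to analyse $-\varepsilon\int_0^T\chi\int\partial_t\tilde\varphi_\varepsilon(\bfW_\varepsilon\cdot\bfvarphi)\,\dxt$. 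An It\^o integration by parts in time---legitimate because $\tilde\varphi_\varepsilon$ has absolutely continuous trajectories, so its cross-variation with the semimartingale $\bfW_\varepsilon$ vanishes---transfers the time derivative onto $\bfW_\varepsilon$, and the acoustic momentum equation \eqref{eq:approximation22} then produces the principal contribution
\[
-\gamma\int_0^T\chi(t)\int_\mt\tilde\varphi_\varepsilon\nabla\tilde\varphi_\varepsilon\cdot\bfvarphi\,\dxt=-\tfrac{\gamma}{2}\int_0^T\chi(t)\int_\mt\nabla(\tilde\varphi_\varepsilon^2)\cdot\bfvarphi\,\dxt=0,
\]
the last equality being the crucial cancellation provided by $\divergence\bfvarphi=0$. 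All remaining terms---the $\chi'$-boundary piece, the $\varepsilon\tilde\bfF_\varepsilon$-piece, and the stochastic remainder $\varepsilon\int_0^T\chi\int\tilde\varphi_\varepsilon\bfvarphi\cdot\mathcal Q\varPhi\,\dx\,\dif\tilde W_\varepsilon$---carry an explicit factor of $\varepsilon$ and are controlled by the uniform bounds of Corollary~\ref{cor:limit1}, the It\^o isometry and the growth conditions \eqref{growth1-}--\eqref{growth1}; they thus tend to $0$ in probability and, after extracting a further subsequence if necessary, $\tilde\p$-a.s.\ (the uniqueness of the limit then upgrades the convergence to the whole sequence).

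The main obstacle is the final step: the algebraic cancellation produced by $\bfvarphi$ being solenoidal is clear, but executing the It\^o integration by parts rigorously on the new probability space given by Proposition~\ref{prop:skorokhod1} requires a pathwise application of the product rule and a careful accounting of the noise contribution, the whole argument working only because every leftover term inherits the small prefactor $\varepsilon$ from the acoustic scaling.
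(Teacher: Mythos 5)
Your overall strategy is essentially the paper's: reduce to the gradient-gradient piece, exploit $\divergence\bfvarphi=0$ to cancel the principal quadratic term after integration by parts in time via the acoustic system, and argue that what remains carries a prefactor $\varepsilon$. The paper's decomposition is slightly different (it splits both $\tilde\varrho_\varepsilon\tilde\bfu_\varepsilon$ and $\tilde\bfu_\varepsilon$ around the limit $\tilde\bfu$ rather than first stripping off $\tilde\varrho_\varepsilon$), but the heart of the argument -- the cancellation $\int\tilde\varphi_\varepsilon\nabla\tilde\varphi_\varepsilon\cdot\bfvarphi=0$ coming from It\^o's formula applied to $\varepsilon\int\tilde\varphi_\varepsilon\nabla\tilde\Psi_\varepsilon\cdot\bfvarphi\dx$ -- is the same.

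There is, however, a genuine gap you have not identified, and it sits exactly where you flag ``the main obstacle'' but misdiagnose it. The issue is not the pathwise product rule or the vanishing cross-variation (that part is fine, and is indeed how the paper proceeds); it is \emph{spatial regularity}. The products that appear after you transfer the time derivative are not integrable for the full range $\gamma>N/2$: when $\gamma<2$ one has $\tilde\varphi_\varepsilon\in L^\infty(0,T;L^\gamma)$ only, so $\tilde\varphi_\varepsilon\nabla\tilde\varphi_\varepsilon=\tfrac12\nabla(\tilde\varphi_\varepsilon^2)$ need not make sense even as a distribution paired against $\bfvarphi$, since $\tilde\varphi_\varepsilon^2\notin L^1$. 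More seriously, the force $\tilde\bfF_\varepsilon$ (in particular its pressure part $\varepsilon^{-2}\nabla[\tilde\varrho_\varepsilon^\gamma-1-\gamma(\tilde\varrho_\varepsilon-1)]$) lives only in $L^1(0,T;W^{-l,2}(\mt))$, so the pairing $\int\tilde\varphi_\varepsilon\,\tilde\bfF_\varepsilon\cdot\bfvarphi\dx$ is ill-defined; it is not merely ``controlled by the uniform bounds.'' The paper handles this by mollifying in space before applying It\^o -- specifically by projecting onto finitely many Fourier modes so that the acoustic system \eqref{eq:Psiphi} becomes a finite system of SDEs with smooth coefficients. The mollified quantities $\tilde\varphi_\varepsilon^\kappa$, $\nabla\tilde\Psi_\varepsilon^\kappa$ are then smooth, the cancellation $\int\tilde\varphi_\varepsilon^\kappa\nabla\tilde\varphi_\varepsilon^\kappa\cdot\bfvarphi=0$ is legitimate, and the $\varepsilon$-prefactored remainders are honest integrals. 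The approximation error introduced by the mollification parameter $\kappa$ is controlled separately via \eqref{eq:regx1}--\eqref{eq:regx2}, using the gradient estimate \eqref{apv-}, and then $\kappa$ is sent to zero after $\varepsilon$. Without this two-parameter argument your It\^o step does not close, so the proof as written is incomplete.

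A smaller point: the ``$O(\varepsilon)$'' claims for $\bfw_\varepsilon-\bfW_\varepsilon$ and for $(\tilde\varrho_\varepsilon-1)\tilde\bfu_\varepsilon\otimes\tilde\bfu_\varepsilon$ require the strong convergence \eqref{conv:rho2} plus Sobolev embedding with the precise exponents coming from $\gamma>N/2$, not just the presence of an explicit $\varepsilon$-factor; this works but deserves to be written out, especially near the borderline $\gamma\downarrow N/2$.
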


\begin{proof}
Following \cite{LiMa2} we decompose
\begin{align*}
\tilde\varrho_\varepsilon\tilde\bfu_\varepsilon&=\tilde\bfu+\mathcal P_H\big(\tilde\varrho_\varepsilon\tilde\bfu_\varepsilon-\tilde\bfu\big)+\mathcal Q\big(\tilde\varrho_\varepsilon\tilde\bfu_\varepsilon-\tilde\bfu\big),\\
\tilde\bfu_\varepsilon&=\tilde\bfu+\mathcal P_H\big(\tilde\bfu_\varepsilon-\tilde\bfu\big)+\mathcal Q\big(\tilde\bfu_\varepsilon-\tilde\bfu\big).
\end{align*}
The claim follows once we can show that the following convergences hold true weakly in $L^1(0,T;W_{\divergence}^{-l,2}(\mt))$ $\tilde\p$-a.s.:
\begin{align}
&\divergence\Big(\tilde\bfu\otimes\mathcal P_H\big(\tilde\bfu_\varepsilon-\tilde\bfu\big) \Big) \rightharpoonup 0,\label{eq:convQ1}\\
&\divergence\Big(\tilde\bfu\otimes\mathcal Q\big(\tilde\bfu_\varepsilon-\tilde\bfu\big) \Big)
\rightharpoonup 0,\label{eq:convQ2}\\
&\divergence\Big(\mathcal P_H\big(\tilde\varrho_\varepsilon\tilde\bfu_\varepsilon-\tilde\bfu\big)\otimes\tilde\bfu\Big)\rightharpoonup 0,\label{eq:convQ3}\\
&\divergence\Big(\mathcal Q\big(\tilde\varrho_\varepsilon\tilde\bfu_\varepsilon-\tilde\bfu\big)\otimes\tilde\bfu\Big)\rightharpoonup 0,\label{eq:convQ4}\\
&\divergence\Big( P_H\big(\tilde\varrho_\varepsilon\tilde\bfu_\varepsilon-\tilde\bfu\big)\otimes\mathcal P_H\big(\tilde\bfu_\varepsilon-\tilde\bfu\big)\Big)\rightharpoonup 0,\label{eq:convQ5}\\
&\divergence\Big( P_H\big(\tilde\varrho_\varepsilon\tilde\bfu_\varepsilon-\tilde\bfu\big)\otimes\mathcal Q\big(\tilde\bfu_\varepsilon-\tilde\bfu\big)\Big)\rightharpoonup 0,\label{eq:convQ6}\\
&\divergence\Big(\mathcal Q\big(\tilde\varrho_\varepsilon\tilde\bfu_\varepsilon-\tilde\bfu\big)\otimes\mathcal P_H\big(\tilde\bfu_\varepsilon-\tilde\bfu\big)\Big)\rightharpoonup 0,\label{eq:convQ7}\\
&\divergence\Big(\mathcal Q\big(\tilde\varrho_\varepsilon\tilde\bfu_\varepsilon-\tilde\bfu\big)\otimes\mathcal Q\big(\tilde\bfu_\varepsilon-\tilde\bfu\big)\Big)\rightharpoonup 0,\label{eq:convQ8}
\end{align}
The first four convergences follow from Proposition \ref{prop:skorokhod1}, \eqref{conv:rhou} and the continuity of $\mathcal P_H$ and $\mathcal Q$ respectively. The convergences
\eqref{eq:convQ5}-\eqref{eq:convQ7} are consequences of \eqref{conv:rho2} and \eqref{conv:newu}. In fact, the only critical part is \eqref{eq:convQ8}. First, we need some improved space regularity.
Similarly to \cite{LiMa2}, we use
mollification by means of spatial convolution with a family of regularizing kernels with a parameter $0<\kappa\ll1$. As a matter of fact, thanks to the special geometry of the flat torus $\mt$, the mollified functions can be taken as projections to a finite number (which is the smallest natural number $\geq\frac{1}{\kappa}$) of
modes of the trigonometric basis $\{ \exp(ikx) \}_{k \in Z}$. In particular, the mollification commutes with all spatial derivatives as well as with
the projections $\mathcal P_h$ and $\mathcal Q$.
For $\delta>0$ arbitrary we take $\kappa=\kappa(\delta)$ so small that
\begin{align}\label{eq:regx1}
\tilde\E\|(\tilde\varrho_\varepsilon\tilde\bfu_\varepsilon)^\kappa-\tilde\varrho_\varepsilon^\kappa\tilde\bfu^\kappa_\varepsilon\|_{L^2(L^{\frac{2\gamma}{\gamma+1}})}+\tilde\E\|(\tilde\varrho_\varepsilon\tilde\bfu_\varepsilon)^\kappa-\tilde\varrho_\varepsilon\tilde\bfu_\varepsilon\|_{L^2(L^{\frac{2\gamma}{\gamma+1}})}\leq \delta,\\
\tilde\E\|(\tilde\varrho_\varepsilon\tilde\bfu_\varepsilon)^\kappa-\tilde\bfu_\varepsilon^\kappa\|_{L^2(L^{\frac{2\gamma}{\gamma+1}})}
+\tilde\E\|\tilde\bfu_\varepsilon^\kappa-\tilde\bfu_\varepsilon\|_{L^2(L^{\frac{2N}{N-2}})}+\tilde\E\|\tilde\bfu^\kappa-\tilde\bfu\|_{L^2(L^{\frac{2N}{N-2}})}\leq \delta,\label{eq:regx2}
\end{align}
uniformly in $\varepsilon$. We note that the norm $\tilde\E\|\tilde\bfu_\varepsilon^\kappa-\tilde\bfu_\varepsilon\|_{L^2(L^{\frac{2N}{N-2}})}$
can be made uniformly small as a consequence of the gradient estimate (\ref{apv-}).
As the mollification commutes with $\divergence$ and $\mathcal Q$, it is enough to show that $\tilde\p$-a.s.
\begin{align}
&\divergence\Big(\mathcal Q\big(\tilde\varrho_\varepsilon^\kappa\tilde\bfu_\varepsilon^\kappa-\tilde\bfu^\kappa\big)\otimes\mathcal Q\big(\tilde\bfu_\varepsilon^\kappa-\tilde\bfu^\kappa\big)\Big)\rightharpoonup 0,\label{eq:convQ8'}
\end{align}
for fixed $\kappa$ instead of \eqref{eq:convQ8} (in fact expectation of the $L^1(0,T;W_{\divergence}^{-l,2}(\mt))$-norm of the difference of \eqref{eq:convQ8'} and \eqref{eq:convQ8} can be estimated in terms of $\delta$ using \eqref{eq:regx1} and \eqref{eq:regx2}).
To prove \eqref{eq:convQ8'} we write
\begin{align*}
\mathcal Q\big(\tilde\bfu_\varepsilon^\kappa-\tilde\bfu^\kappa\big)=\mathcal Q\big(\tilde\varrho_\varepsilon^\kappa\tilde\bfu^\kappa_\varepsilon-\tilde\bfu^\kappa\big)+\mathcal Q\big((1-\tilde\varrho^\kappa_\varepsilon)\tilde\bfu^\kappa_\varepsilon\big).
\end{align*}
By \eqref{conv:rho2}, the continuity of $\mathcal Q$ and the boundedness of $\tilde\bfu_\varepsilon^\kappa$ we know that
\begin{align*}
\mathcal Q\big((1-\tilde\varrho^\kappa_\varepsilon)\tilde\bfu_\varepsilon^\kappa\big)\rightarrow0\quad\text{in}\quad L^2(Q)
\end{align*}
$\tilde\p$-a.s. So  \eqref{eq:convQ8'} follows from
\begin{align}
\mathcal \divergence\Big(Q\big(\tilde\varrho^\kappa_\varepsilon\tilde\bfu^\kappa_\varepsilon-\tilde\bfu^\kappa\big)\otimes\mathcal Q\big(\tilde\varrho^\kappa_\varepsilon\tilde\bfu^\kappa_\varepsilon-\tilde\bfu^\kappa\big) \Big)\rightharpoonup 0,\label{eq:convQ9}
\end{align}
in $L^1(0,T;W_{\divergence}^{-l,2}(\mt))$.
As $\divergence\big(\mathcal Q\tilde\bfu^\kappa\otimes\mathcal Q\tilde\bfu^\kappa\big)=\tfrac{1}{2}\nabla |\mathcal Q\tilde\bfu^\kappa|^2$,
the convergence \eqref{eq:convQ9} is a consequence of
 \begin{align}
\divergence\Big(\mathcal Q\big(\tilde\varrho_\varepsilon\tilde\bfu_\varepsilon\big)^\kappa\otimes\mathcal Q\big(\tilde\varrho_\varepsilon\tilde\bfu_\varepsilon\big)^\kappa\Big)\rightharpoonup 0\quad\text{in}\quad L^1(0,T;W_{\divergence}^{-l,2}(\mt)),\label{eq:convQ10}
\end{align}
thanks to \eqref{conv:rhou} and \eqref{eq:regx1}.
In order to show \eqref{eq:convQ10} (we need to introduce the function $\tilde\Psi_\varepsilon=\Delta^{-1}\divergence(\tilde\varrho_\varepsilon\tilde\bfu_\varepsilon)$
which satisfies $\nabla\tilde\Psi_\varepsilon=\mathcal Q(\tilde\varrho_\varepsilon\tilde\bfu_\varepsilon)$. We have the system of equations
\begin{align*}%\label{eq:Psiphi}
\dd(\varepsilon\tilde\varphi_\varepsilon)=-\nabla\tilde\Psi_\varepsilon\dt,\quad \dd\nabla\tilde\Psi_\varepsilon=-\frac{\gamma}{\varepsilon}\nabla\tilde\varphi_\varepsilon\dt+\tilde\bfF_\varepsilon\dt+\mathcal Q\Phi(\tilde\varrho_\varepsilon,\tilde\varrho_\varepsilon\tilde\bfu_\varepsilon)\dd \tilde W_\varepsilon.
\end{align*}
The right-hand-side only belongs to $W^{-l,2}(\mt)$. So
we apply mollification and gain
$\tilde\Psi_\varepsilon^\kappa=\Delta^{-1}\divergence(\tilde\varrho_\varepsilon\tilde\bfu_\varepsilon)^\kappa$
and $\nabla\tilde\Psi_\varepsilon^\kappa=\mathcal Q(\tilde\varrho_\varepsilon\tilde\bfu_\varepsilon)^\kappa$. The system of equations
for $\tilde\varphi^\kappa_\varepsilon$ and $\tilde\Psi_\varepsilon^\kappa$ reads as
\begin{align}\label{eq:Psiphi}
\dd(\varepsilon\tilde\varphi_\varepsilon^\kappa)=-\Delta\tilde\Psi^\kappa_\varepsilon\dt,\quad \dd\nabla\tilde\Psi_\varepsilon^\kappa=-\frac{\gamma}{\varepsilon}\nabla\tilde\varphi_\varepsilon^\kappa\dt+\tilde\bfF^\kappa_\varepsilon\dt+\mathcal Q\Phi(\tilde\varrho_\varepsilon,\tilde\varrho_\varepsilon\tilde\bfu_\varepsilon)^\kappa\dd \tilde W_\varepsilon.
\end{align}
We note that for the special choice, where the mollification is taken as the projection onto a finite number of Fourier modes, the
system (\ref{eq:Psiphi}) reduces to a \emph{finite number} of equations.
Now, we apply It\^{o}'s formula to the function $$f(\varepsilon\tilde\varphi_\varepsilon^\kappa,\nabla\tilde\Psi_\varepsilon^\kappa)=\int_{\mt}\varepsilon\tilde\varphi^\kappa_\varepsilon\nabla\tilde\Psi^\kappa_\varepsilon\cdot\bfvarphi\dx,$$
with $\bfvarphi\in C^\infty_{\divergence}(\mt)$ arbitrary
and gain
\begin{align*}
\int_{\mt}&\varepsilon\tilde\varphi_\varepsilon^\kappa(t)\nabla\tilde\Psi^\kappa_\varepsilon(t)\cdot\bfphi\dx\\
&=-\int_0^t\int_{\mt}\Delta\tilde\Psi^\kappa_\varepsilon\nabla\tilde\Psi^\kappa_\varepsilon\cdot\bfphi\dxs
-\gamma\int_0^t\int_{\mt}\tilde\varphi^\kappa_\varepsilon\nabla\tilde\varphi^\kappa_\varepsilon\cdot\bfphi\dxs\\
&+\varepsilon\int_0^t\int_{\mt}\tilde\varphi_\varepsilon^\kappa\tilde\bfF^\kappa_\varepsilon\cdot\bfphi\dxs
+\varepsilon\int_{\mt}\int_0^t\tilde\varphi_\varepsilon^\kappa\bfphi\cdot\mathcal Q\Phi(\tilde\varrho_\varepsilon,\tilde\varrho_\varepsilon\tilde\bfu_\varepsilon)^\kappa\dd \tilde W_\varepsilon\dx.
\end{align*}
And we have
\begin{align*}
\int_0^t\int_{\mt}&\Delta\tilde\Psi^\kappa_\varepsilon\nabla\tilde\Psi^\kappa_\varepsilon\cdot\bfphi\dxs
\\&=\frac{1}{2}\int_0^t\int_{\mt}\nabla|\nabla\tilde\Psi^\kappa_\varepsilon|^2\cdot\bfphi\dxs-\int_0^t\int_{\mt}\nabla\tilde\Psi^\kappa_\varepsilon\otimes\nabla\tilde\Psi_\varepsilon^\kappa:\nabla\bfphi\dxs\\
&=-\int_0^t\int_{\mt}\nabla\tilde\Psi^\kappa_\varepsilon\nabla\tilde\Psi^\kappa_\varepsilon:\nabla\bfphi\dxs,\\
\int_0^t\int_{\mt}&\tilde\varphi_\varepsilon^\kappa\nabla\tilde\varphi^\kappa_\varepsilon\cdot\bfphi\dxs
=\frac{1}{2}\int_0^t\int_{\mt}\nabla|\tilde\varphi^\kappa_\varepsilon|^2\cdot\bfphi\dxs=0,
\end{align*}
due to $\divergence\bfphi=0$. So we end up with
\begin{align*}
\int_0^t\int_{\mt}&\nabla\tilde\Psi^\kappa_\varepsilon\otimes\nabla\tilde\Psi^\kappa_\varepsilon:\nabla\bfphi\dxs=-\varepsilon\int_{\mt}\tilde\varphi^\kappa_\varepsilon(t)\nabla\tilde\Psi^\kappa_\varepsilon(t)\cdot\bfphi\dx\\
&+\varepsilon\int_0^t\int_{\mt}\tilde\varphi_\varepsilon^\kappa\tilde\bfF^\kappa_\varepsilon\cdot\bfphi\dxs
+\varepsilon\int_{\mt}\int_0^t\tilde\varphi_\varepsilon^\kappa\bfphi\cdot\mathcal Q\Phi(\tilde\varrho_\varepsilon,\tilde\varrho_\varepsilon\tilde\bfu_\varepsilon)^\kappa\dd \tilde W_\varepsilon\dx.
\end{align*}
For fixed $\kappa>0$ the right-hand-side vanishes $\tilde\p$-a.s. for $\varepsilon\rightarrow0$ at least after taking a subsequence due to Corollary \ref{cor:limit1}, Proposition \ref{prop:skorokhod1} and the properties of the mollification.
Finally we conclude with \eqref{eq:convQ10} which implies the last missing convergence \eqref{eq:convQ8} as explained above.
\end{proof}

Now, we have all in hand to complete the proof of Theorem \ref{thm:1} which implies the proof of our main result, Theorem \ref{thm:main}.

%\begin{theorem}
%The process $\tilde W$ is a $(\tilde\mf_t)$-cylindrical Wiener process and
%$$\big((\tilde\Omega,\tilde\mf,(\tilde\mf_t),\tilde\prst),\tilde \bfu,\tilde W\big)$$
%is a weak martingale solution to \eqref{eq:lim} with the initial law $\Gamma$.
%
%\end{theorem}

\begin{proof}[Proof of Theorem \ref{thm:1}]
The first part of the claim follows immediately from the fact that all $\tilde W_\varepsilon$ are cylindrical Wiener processes due to Proposition \ref{prop:limit1}. As a consequence, there exists a collection of mutually independent real-valued $(\tilde{\mf}_t)$-Wiener processes $(\tilde{\beta}_k)_{k\geq1}$ such that $\tilde{W}=\sum_{k\geq1}\tilde{\beta}_k e_k$.

In order to show that \eqref{eq:lim} is satisfied in the sense of Definition \ref{def:inc}, let us take a divergence free test function $\bfphi\in C^\infty_{\text{div}}(\mt)$ and consider the functionals $M,\,N,\,N_k$ from Proposition \ref{prop:limit1}. This way we only study the approximate equation \eqref{eq2} projected by $\mathcal{P}_H$ and the pressure term drops out. Having \eqref{exp11}, \eqref{exp21} and \eqref{exp31} in hand, we intend to pass to the limit as $\varepsilon\rightarrow 0$ and to deduce
\begin{equation}\label{exp111}
\begin{split}
&\tilde{\stred}\,h\big(\bfr_s\tilde{\bu},\bfr_s\tilde{W}\big)\big[M(1,\tilde\bu,\tilde\bfu)_{s,t}\big]=0,
\end{split}
\end{equation}
\begin{equation}\label{exp211}
\begin{split}
&\tilde{\stred}\,h\big(\bfr_s\tilde{\bu},\bfr_s\tilde{W}\big)\bigg[[M(1,\tilde\bu,\tilde\bfu)^2]_{s,t}-N(1,\tilde\bfu)_{s,t}\bigg]=0,
\end{split}
\end{equation}
\begin{equation}\label{exp311}
\begin{split}
&\tilde{\stred}\,h\big( \bfr_s\tilde{\bu},\bfr_s\tilde{W}\big)\bigg[[M(1,\tilde\bu,\tilde\bfu)\tilde{\beta}_k]_{s,t}-N_k(1,\tilde\bfu)_{s,t}\bigg]=0.
\end{split}
\end{equation}
Note that the proof will then be complete. Indeed, \eqref{exp111}, \eqref{exp211} and \eqref{exp311} imply that the process $M(1,\tilde\bfu,\tilde\bfu)$ is a $(\tilde\mf_t)$-martingale and its quadratic and cross variations satisfy, respectively,
\begin{equation*}%\label{mart}
\begin{split}
\langle\!\langle M(1,\tilde\bfu,\tilde\bfu)\rangle\!\rangle&=N(1,\tilde\bfu),\quad\qquad\langle\!\langle M(1,\tilde\bfu,\tilde\bfu),\tilde \beta_k\rangle\!\rangle=N_k(1,\tilde\bfu),
\end{split}
\end{equation*}
and consequently
$$\bigg\langle\!\!\!\bigg\langle M(1,\tilde\bu,\tilde\bfu)-\int_0^\tec \big\langle\varPhi(1,\tilde\bu)\,\dif \tilde W,\bfvarphi\big\rangle\bigg\rangle\!\!\!\bigg\rangle=0$$
hence \eqref{eq2lim} is satisfied in the sense required by Definition \ref{def:inc}.

Let us now verify \eqref{exp111}, \eqref{exp211} and \eqref{exp311}. First of all we observe that
$$M(\tilde\varrho_\varepsilon,\tilde\bfu_\varepsilon,\tilde\varrho_\varepsilon\tilde\bfu_\varepsilon)_t\rightarrow M(1,\tilde\bfu,\tilde\bfu)_t\quad\text{a.s.}$$
due to Proposition \ref{prop:skorokhod1}, Proposition \ref{conv:convect} and \eqref{conv:Prhou}. Application of the Vitali convergence theorem together with the uniform estimates \eqref{apv}, \eqref{estrhou2} and \eqref{estrhouu} justifies the passage to the limit in \eqref{exp11} and \eqref{exp111} follows. The same argument implies the passage to the limit in the part of \eqref{exp21} and \eqref{exp31} involving $M$.

Finally, we comment on the passage to the limit in the terms coming from the stochastic integral, i.e. $N$ and $N_k$. The convergence in \eqref{exp31} being easier, let us only focus on \eqref{exp21} in detail.
As the first step we note that the convergence
\begin{align*}
\sum_{k\geq1}\big\langle \bfg_k(\tilde\varrho_\varepsilon,\tilde\varrho_\varepsilon\tilde\bfu_\varepsilon),\bfvarphi\big\rangle^2\rightarrow \sum_{k\geq 1}\big\langle \bfg_k(1,\tilde\bfu),\bfvarphi\big\rangle^2\qquad\tilde\prst\otimes\mathcal{L}\text{-a.e}.
\end{align*}
follows once we show that
\begin{equation}\label{convL2}
\big\langle\varPhi(\tilde\varrho_\varepsilon,\tilde\varrho_\varepsilon\tilde\bfu_\varepsilon)\,\cdot\,,\bfvarphi\big\rangle\rightarrow\big\langle\varPhi(1,\tilde\bfu)\,\cdot\,,\bfvarphi\big\rangle\qquad\text{in}\qquad L_2(\mathfrak{U};\mr)\qquad\tilde\prst\otimes\mathcal{L}\text{-a.e.}
\end{equation}
To this end, we write
\begin{align*}
&\big\|\big\langle\varPhi(\tilde\varrho_\varepsilon,\tilde\varrho_\varepsilon\tilde\bfu_\varepsilon)\,\cdot\,,\bfvarphi\big\rangle-\big\langle\varPhi(1,\tilde\bfu)\,\cdot\,,\bfvarphi\big\rangle\big\|_{L_2(\mathfrak{U};\mr)}\\
&\quad\leq\bigg(\sum_{k\geq 1}\big|\big\langle \bfh_k(\tilde\varrho_\varepsilon)-\bfh_k(1),\bfvarphi\big\rangle\big|^2\bigg)^\frac{1}{2}+\bigg(\sum_{k\geq 1}|\alpha_k|^2\big|\big\langle \tilde\varrho_\varepsilon\tilde\bfu_\varepsilon-\tilde\bfu,\bfvarphi\big\rangle\big|^2\bigg)^\frac{1}{2}\\
&\quad =I_1+I_2.
\end{align*}
For $I_2$ we use \eqref{growth1-} together with \eqref{conv:Prhou} to obtain
$I_2\rightarrow0$ for a.e. $(\omega,t)$.
For $I_1$ we apply the Minkowski integral inequality, the mean value theorem, \eqref{growth1} and \eqref{growth2} to obtain
\begin{align*}
I_1&\leq C\bigg(\,\sum_{k\geq1}\big\| \bfh_k(\tilde \varrho_\varepsilon)-\bfh_k(1)\big\|_{L^1_x}^2\bigg)^{\frac{1}{2}}\leq C\int_{\mt}\!\bigg(\sum_{k\geq1}\big|\bfh_k(\tilde \varrho_\varepsilon)-\bfh_k(1)\big|^2\bigg)^{\frac{1}{2}}\dif x\\
&\leq C\int_{\mt}\Big(1+\tilde\varrho_\varepsilon^{\frac{\gamma-1}{2}}\Big)|\tilde\varrho_\varepsilon-1|\,\dif x\leq C\bigg[\int_{\mt}\Big(1+\tilde\varrho_\varepsilon^{\frac{\gamma-1}{2}}\Big)^{p}\,\dif x\bigg]^{\frac{1}{{p}}}\bigg[\int_{\mt}|\tilde\varrho_\varepsilon-1|^{q}\,\dif x\bigg]^{\frac{1}{q}}
\end{align*}
where the conjugate exponents $p,q\in(1,\infty)$ are chosen in such a way that
$$p\frac{\gamma-1}{2}<\gamma+1\qquad\text{and}\qquad q<\gamma.$$
Therefore, using \eqref{aprho}, \eqref{conv:rho2} we deduce
\begin{align*}
\tilde\E \int_0^TI_1\dt\rightarrow 0.
\end{align*}
and so for a subsequence $I\rightarrow 0$ for a.e. $(\omega, t)$ and \eqref{convL2} follows. Besides, since, for all $p\geq 2$,
\begin{align*}
\tilde\stred\int_s^t&\big\|\big\langle\varPhi(\tilde \varrho_\varepsilon,\tilde\varrho_\varepsilon\tilde\bfu_\varepsilon)\,\cdot,\bfvarphi\big\rangle\big\|_{L_2(\mathfrak{U};\mr)}^p\,\dif r\\
&\leq C\,\tilde\stred\int_s^t\|\tilde\varrho_\varepsilon\|_{L^2}^{\frac{p}{2}}\Big(1+\|\tilde\varrho_\varepsilon\|_{L^\gamma}^\gamma+\|\sqrt{\tilde\varrho_\varepsilon}\tilde\bfu_\varepsilon\|^2_{L^2}\Big)^{\frac{p}{2}}\dif r\\
&\leq C\bigg(1+\tilde\stred\sup_{0\leq t\leq T}\|\tilde\varrho_\varepsilon\|_{L^\gamma}^{\gamma p}+\tilde\stred\sup_{0\leq t\leq T}\|\sqrt{\tilde\varrho_\varepsilon}\tilde\bfu_\varepsilon\|_{L^{2}}^{2p}\bigg)\leq C
\end{align*}
due to \eqref{aprhov}, \eqref{aprho}, we obtain the convergence in \eqref{exp21} and therefore $\tilde\bfu$ solves \eqref{eq:lim}.
It follows immediately from our construction that for all $p\in[1,\infty)$
$$\tilde\bfu\in L^p(\tilde\Omega;L^2(0,T;W^{1,2}_{\text{div}}(\mt))).$$
Besides, since we have (due Proposition \ref{prop:skorokhod1} and \eqref{conv:rho2})
\begin{align*}
\sqrt{\tilde\varrho_\varepsilon}\tilde\bfu_\varepsilon\rightharpoonup \tilde\bfu\quad\text{in}\quad L^1(\Omega;L^1(Q))
\end{align*}
lower semi-continuity of the functional
\begin{align*}
\tilde\bfw\mapsto \tilde\E\bigg[\sup_{t\in(0,T)}\int_{\mt}|\tilde\bfw|^2\dx\bigg]^{\frac{p}{2}}
\end{align*}
yields $\tilde\bfu\in L^p(\tilde\Omega;L^\infty(0,T;L^2(\mt)))$ on account of Corollary \ref{cor:limit1}. The usual argument about the fractional time derivative (in the distributional sense) implies
$$\tilde\bfu\in L^p(\tilde\Omega;C_w([0,T];L_{\text{div}}^{2}(\mt)))$$
%$\tilde\bfu $ is also the limit of $\mathcal P_H(\tilde\varrho_\varepsilon\tilde\bfu_\varepsilon)$ in $\mathcal{X}_{\varrho\bfu}$, it follows from \eqref{estrhou2} that
%$$\tilde\bfu\in L^p(\tilde\Omega;C_w([0,T];L_{\text{div}}^{\frac{2\gamma}{\gamma+1}}(\mt)))$$
%\rmk{was this problem with energy inequality only fro the compressible system? I mean, can we now test by $\tilde\bfu$ (via the ito formula) to get the estimate in
%$L^\infty(0,T;L^2)$ and to conclude the weak continuity there?}\textcolor{red}{We can not test with $\tilde\bfu$. The bound in $L^\infty(0,T;L^2)$ follows from lower-semicontinuity. Weak continuity is first given in some negative space, combining the with $L^\infty(0,T;L^2)$ implies $C_w([0,T];L^2)$}\rmk{exactly my point. but how exactly do we get $L^\infty(L^2)$ for $\tilde\bfu$? the lower semicontinuity applied to $\varrho_\varepsilon\bfu_\varepsilon$ only gives $L^\infty(L^\frac{2\gamma}{\gamma+1})$, no?}
and the proof is complete.
\end{proof}

%As a consequence, we deduce that the result of Theorem \ref{thm:main} is proven.

\section{Proof of Theorem \ref{thm:main2d}}
\label{subsec:strong}

In order to complete the proof of Theorem \ref{thm:main2d}, we make use of Proposition \ref{diagonal} which is a generalization of the Gy\"{o}ngy-Krylov characterization of convergence in probability introduced in \cite{krylov} adapted to the case of quasi-Polish spaces. It applies to situations when pathwise uniqueness and existence of a martingale solution are valid and allows to establish existence of a pathwise solution. We recall that in the case of $N=2$ pathwise uniqueness for \eqref{eq:lim} is known (cf. Theorem \ref{thm:inc}).

We consider the collection of joint laws of
$$(\varrho_n,\bfu_n,\mathcal{P}(\varrho_n\bfu_n),\varrho_m,\bfu_m,\mathcal{P}(\varrho_m\bfu_m))\quad\text{on}\quad\mathcal{X}_\varrho\times\mathcal{X}_{\bfu}\times\mathcal{X}_{\varrho\bfu}\times\mathcal{X}_\varrho\times\mathcal{X}_{\bfu}\times\mathcal{X}_{\varrho\bfu},$$
denoted by $\mu^{n,m}$. For this purpose we define the extended path space
$$\mathcal{X}^J=\mathcal{X}_\varrho\times\mathcal{X}_{\bfu}\times\mathcal{X}_{\varrho\bfu}\times\mathcal{X}_\varrho\times\mathcal{X}_{\bfu}\times\mathcal{X}_{\varrho\bfu}\times\mathcal{X}_W$$
As above, denote by $\mu_W$ the law of $W$ and set $\nu^{n,m}$ to be the joint law of
$$(\varrho_n,\bfu_n,\mathcal{P}(\varrho_n\bfu_n),\varrho_m,\bfu_m,\mathcal{P}(\varrho_m\bfu_m),W)\quad\text{on}\quad\mathcal{X}^J.$$
Similarly to Corollary \ref{cor:tight} the following fact holds true. The proof is nearly identical and so will be left to the reader.

\begin{proposition}
The collection $\{\nu^{n,m};\,n,m\in\mn\}$ is tight on $\mathcal{X}^J$.
\end{proposition}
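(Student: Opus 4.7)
The plan is to follow verbatim the argument of Corollary \ref{cor:tight}, which reduces tightness on $\mathcal{X}$ to separate tightness of each marginal. A finite product of tight families on topological spaces is again tight on the product space: given $\eta>0$, one selects for each factor a compact set of measure at least $1-\eta/K$ (with $K$ the number of factors), and the product of these sets, compact by Tychonoff, carries joint measure at least $1-\eta$ uniformly in the parameters $n,m$.

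First I would identify the marginals of $\nu^{n,m}$. The projection onto the first three factors $\mathcal{X}_\varrho\times\mathcal{X}_\bfu\times\mathcal{X}_{\varrho\bfu}$ coincides with the $(\varrho,\bfu,\mathcal{P}_H(\varrho\bfu))$-marginal of $\mu^n$; similarly the projection onto the next three factors coincides with the $(\varrho,\bfu,\mathcal{P}_H(\varrho\bfu))$-marginal of $\mu^m$. Tightness of these marginals on the respective component spaces is exactly the content of Propositions \ref{prop:bfutightness}, \ref{prop:rhotight} and \ref{rhoutight1}, which furnish compacts in $\mathcal{X}_\bfu$, $\mathcal{X}_\varrho$ and $\mathcal{X}_{\varrho\bfu}$ of arbitrarily large probability uniformly in the parameter $\varepsilon\in(0,1)$. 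Since the estimates \eqref{apv}, \eqref{aprho}, \eqref{estrhou2} and the H\"older bounds underlying Proposition \ref{rhoutight1} are uniform in $\varepsilon$, they apply simultaneously to both indices $n$ and $m$, and the conclusions of those three propositions transfer without change.

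The seventh marginal is the law $\mu_W$, independent of $n,m$; since $\mathcal{X}_W=C([0,T];\mathfrak{U}_0)$ is a Polish space, $\mu_W$ is automatically tight.

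Putting the pieces together, given $\eta>0$, I pick compact sets $K_1^\varrho,K_1^\bfu,K_1^{\varrho\bfu}$ in the first three factors, $K_2^\varrho,K_2^\bfu,K_2^{\varrho\bfu}$ in the next three, and $K_W\subset\mathcal{X}_W$, each of measure at least $1-\eta/7$ under the corresponding marginal uniformly in $n,m$. Their product $K$ is compact in $\mathcal{X}^J$, and a union bound on complements gives $\nu^{n,m}(K)\geq 1-\eta$ uniformly in $n,m$. I do not anticipate any substantive obstacle; the only point requiring care is the bookkeeping that every uniform estimate invoked survives when the same bound is applied to two indices from the same $\varepsilon$-family, which it obviously does.
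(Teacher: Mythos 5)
Your argument is correct and is precisely what the paper intends when it states that the proof is ``nearly identical'' to Corollary \ref{cor:tight}: the marginal tightness established in Propositions \ref{prop:bfutightness}, \ref{prop:rhotight} and \ref{rhoutight1} rests on estimates uniform in $\varepsilon$, hence applies simultaneously to both indices, and joint tightness on the finite product follows from the usual compact-product and union-bound argument together with the automatic tightness of $\mu_W$ on the Polish space $\mathcal{X}_W$.
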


Let us take any subsequence $\{\nu^{n_k,m_k};\,k\in\mn\}$. By the Jakubowski-Skorokhod theorem, Theorem \ref{thm:jakubow}, we infer (for a further subsequence but without loss of generality we keep the same notation) the existence a probability space $(\bar{\Omega},\bar{\mf},\bar{\prst})$ with a sequence of random variables
$$(\hat\varrho_{n_k},\hat\bfu_{n_k},\hat\bfq_{n_k},\check\varrho_{m_k},\check\bfu_{m_k},\check\bfq_{m_k},\bar W_k),\quad k\in\mn,$$
conver\-ging almost surely in $\mathcal{X}^J$ to a random variable
$$(\hat\varrho,\hat\bfu,\hat\bfq,\check\varrho,\check\bfu,\check\bfq,\bar W)$$
and
$$\bar{\prst}\big((\hat\varrho_{n_k},\hat\bfu_{n_k},\hat\bfq_{n_k},\check\varrho_{m_k},\check\bfu_{m_k},\check\bfq_{m_k},\bar W_k)\in\,\,\cdotp\big)=\nu^{n_k,m_k}(\cdot).$$
%$$\bar{\prst}\big((\hat\varrho,\hat\bfu,\hat\bfq,(\hat\psi,\hat\bfm),\check\varrho,\check\bfu,\check\bfq,(\check\psi,\check\bfm),\bar W)\in\,\,\cdotp\big)=\nu(\cdot).$$
Observe that in particular, $\mu^{n_k,m_k}$ converges weakly to a measure $\mu$ defined by
$$\mu(\cdot)=\bar{\prst}\big((\hat\varrho,\hat\bfu,\hat\bfq,\check\varrho,\check\bfu,\check\bfq)\in\,\,\cdotp\big).$$
As the next step, we should recall the technique established in Subsection \ref{subsec:ident}. Analogously, it can be applied to both
$$(\hat\varrho_{n_k},\hat\bfu_{n_k},\hat\bfq_{n_k},\bar W_k),\;(\hat\varrho,\hat\bfu,\hat\bfq,\bar W)$$
and
$$(\check\varrho_{m_k},\check\bfu_{m_k},\check\bfq_{m_k},\bar W_k),\;(\check\varrho,\check\bfu,\check\bfq,\bar W)$$
in order to show that $(\hat\bfu,\bar W)$ and $(\check\bfu,\bar W)$ are weak martingale solutions to \eqref{eq:lim} defined on the same stochastic basis $(\bar{\Omega},\bar{\mf},(\bar{\mf}_t),\bar{\prst})$, where $(\bar{\mf}_t)$ is the $\bar\prst$-augmented canonical filtration of $(\hat\bfu,\check\bfu,\bar W)$.
Besides, we obtain that
$$\hat\varrho=\check\varrho,\quad\hat\bfq=\hat\bfu,\quad\check\bfq=\check\bfu\qquad\bar\prst\text{-a.s.}$$
In order to verify the condition \eqref{eq:diag} from Proposition \ref{diagonal} we employ the pathwise uniqueness result for \eqref{eq:lim} in two dimensions, cf. Theorem \ref{thm:inc}. Indeed, it follows from our assumptions on the approximate initial laws $\Lambda_\varepsilon$ that $\hat{\bfu}(0)=\check{\bfu}(0)=\bfu_0$ $\bar\p$-a.s., therefore according to Theorem \ref{thm:inc2d} the solutions $\hat\bfu$ and $\check\bfu$ coincide $\bar\p$-a.s. and
\begin{align*}
\mu&\Big((\varrho_1,\bfu_1,\bfq_1,\varrho_2,\bfu_2,\bfq_2);\;(\varrho_1,\bfu_1,\bfq_1)=(\varrho_2,\bfu_2,\bfq_2)\Big)\\
&\quad=\bar{\prst}\Big((\hat\varrho,\hat\bfu,\hat\bfq)=(\check\varrho,\check\bfu,\check\bfq)\Big)=\bar{\prst}(\hat{\bfu}=\check{\bfu})=1.
\end{align*}

Now, we have all in hand to apply Proposition \ref{diagonal}. It implies that the original sequence $(\varrho_\varepsilon,\bfu_\varepsilon,\mathcal P(\varrho_\varepsilon\bfu_\varepsilon))$ defined on the initial probability space $(\Omega,\mf,\prst)$ converges in probability in the topology of $\mathcal{X}_\varrho\times\mathcal{X}_{\bfu}\times\mathcal{X}_{\varrho\bfu}$ to a random variable $(\varrho,\bfu,\bfq)$. Without loss of gene\-rality, we assume that the convergence is almost sure and again by the method from Subsection \ref{subsec:ident} we finally deduce that $\bfu$ is a pathwise weak solution to \eqref{eq:lim}. Actually, identification of the limit is more straightforward here since in this case all the work is done for the initial setting and only one fixed driving Wiener process $W$ is considered. The proof of Theorem \ref{thm:main2d} is complete.

\appendix
\section{Quasi-Polish spaces}
\label{sec:appendix}

The so-called quasi-Polish spaces are topological spaces that are not necessarily metrizable but nevertheless they enjoy several important properties of Polish spaces. Let us recall their definition introduced in \cite{jakubow}.

\begin{definition}\label{def:quasip}
Let $(X,\tau)$ be a topological space such that there exists a countable family
$$\{f_n:X\rightarrow[-1,1];\,n\in\mn\}$$
of continuous functions that separate points of $X$.
\end{definition}

Among the properties of quasi-Polish spaces used in the main body of this paper belongs the following Jakubowski-Skorokhod representation theorem, see \cite[Theorem 2]{jakubow}.

\begin{theorem}\label{thm:jakubow}
Let $(X,\tau)$ be a quasi-Polish space and let $\mathcal{S}$ be the $\sigma$-field generated by $\{f_n;\,n\in\mn\}$. If $\{\mu_n;\,n\in\mn\}$ is a tight sequence of probability measures on $(X,\mathcal{S})$, then there exists a subsequence $(n_k)$, a probability space $(\Omega,\mf,\prst)$ with $X$-valued Borel measurable random variables $\{\xi_k;\,k\in\mn\}$ and $\xi$ such that $\mu_{n_k}$ is
the law of $\xi_k$ and $\xi_k$ converges to $\xi$ in $X$ a.s. Moreover, the law of $\xi$ is a Radon measure.
\end{theorem}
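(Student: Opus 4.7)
My plan is to reduce the statement to the classical Skorokhod representation theorem on a compact Polish space by exploiting the separating family $\{f_n\}_{n\in\mn}$. Define
\[
\iota:X\to Y:=[-1,1]^{\mn},\qquad \iota(x)=(f_n(x))_{n\in\mn}.
\]
Since each $f_n$ is continuous and the family separates points, $\iota$ is a continuous injection; moreover, by the very definition of $\mathcal S$, the map $\iota$ is a measurable bijection of $(X,\mathcal S)$ onto $(\iota(X),\mathcal B(Y)\cap \iota(X))$ with measurable inverse. The target $Y$, equipped with the product topology, is a compact metric (hence Polish) space.

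Next, I would push the given measures forward, $\nu_n:=\mu_n\circ\iota^{-1}$ on the Polish space $Y$. Since $Y$ is compact, $\{\nu_n\}$ is automatically tight, and Prokhorov's theorem yields a subsequence $\nu_{n_k}\rightharpoonup\nu$. Applying the classical Skorokhod representation theorem on $Y$ produces a probability space $(\Omega,\mf,\mathbb P)$ and $Y$-valued random variables $\eta_k,\eta$ with $\mathrm{law}(\eta_k)=\nu_{n_k}$, $\mathrm{law}(\eta)=\nu$, and $\eta_k\to\eta$ a.s.\ in $Y$.

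The decisive point is to transport everything back to $X$. To show that $\nu$ is concentrated on $\iota(X)$, I use tightness of $\{\mu_n\}$: for each $m\in\mn$ pick a compact $K_m\subset X$ with $\mu_n(K_m)\geq 1-1/m$ for every $n$. Because $\iota$ is continuous, $\iota(K_m)$ is compact in $Y$, and $\nu_n(\iota(K_m))\geq 1-1/m$; the Portmanteau theorem then yields $\nu(\iota(K_m))\geq 1-1/m$, so $\nu$ is supported on the $\sigma$-compact set $\bigcup_m\iota(K_m)\subseteq\iota(X)$ and is therefore a Radon measure on $X$ once pulled back. Set $\xi_k:=\iota^{-1}(\eta_k)$ and, on the full-measure event where $\eta\in\iota(X)$, set $\xi:=\iota^{-1}(\eta)$. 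By construction $\mathrm{law}(\xi_k)=\mu_{n_k}$.

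The main obstacle is the a.s.\ convergence $\xi_k\to\xi$ in the original topology $\tau$, which is not automatic because $\iota$ need not be a homeomorphism onto its image. I would resolve this by restricting attention to the compacts $K_m$: on each $K_m$ the map $\iota|_{K_m}$ is a continuous bijection from a compact space into a Hausdorff space, hence a homeomorphism onto $\iota(K_m)$, so convergence in $Y$ of a sequence lying in $\iota(K_m)$ with limit in $\iota(K_m)$ is equivalent to convergence in $X$. Using $\sum_m \mathbb P(\eta\notin \iota(K_m))<\infty$ together with an analogous summable tail for $\eta_k$ (via $\nu_{n_k}(\iota(K_m)^c)\leq 1/m$) and a Borel--Cantelli/diagonal argument, one gets $\mathbb P$-a.s.\ some (random) $m$ such that both $\eta$ and $\eta_k$ for all $k$ large belong to $\iota(K_m)$, whence $\xi_k\to\xi$ in $X$. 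This delicate step is where the quasi-Polish hypothesis is really used, and it completes the proof.
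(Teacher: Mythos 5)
The paper does not prove this theorem; it cites it as \cite[Theorem 2]{jakubow}, so the comparison is to Jakubowski's original argument. Most of your reduction is sound: the embedding $\iota:X\to Y=[-1,1]^{\mathbb N}$, the identification of $\mathcal S$ with $\sigma(\iota)$, Prokhorov on the compact $Y$, concentration of the limit $\nu$ on the $\sigma$-compact set $\bigcup_m\iota(K_m)$ via Portmanteau, the measurability of $\iota^{-1}$ on that set, the matching of laws, and the Radon property of $\mathrm{law}(\xi)$ are all correct.

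The gap is in the last step, and it is not a mere technicality. After a black-box application of the classical Skorokhod theorem on $Y$ you have $\eta_k\to\eta$ a.s. in $Y$ and, for each $k$ and $m$, $\mathbb P(\eta_k\notin\iota(K_m))\leq 2^{-m}$ (with the natural normalization $\mu_n(K_m)\geq 1-2^{-m}$). But these bounds are \emph{not} summable in $k$ for fixed $m$: $\sum_k 2^{-m}=\infty$. Borel--Cantelli therefore does not yield that $\eta_k\in\iota(K_m)$ eventually, and a diagonal choice $m_k\to\infty$ only puts $\eta_k$ into ever-growing compacts $\iota(K_{m_k})$, not into one common $\iota(K_m)$. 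Nothing in the black-box Skorokhod construction controls the correlation of the events $\{\eta_k\notin\iota(K_m)\}$ across $k$; they could in principle spread over $\Omega$ so that for every $m$, $\eta_k$ leaves $\iota(K_m)$ infinitely often on a set of full measure, while still converging in the weak topology of $Y$. Since $\iota^{-1}$ is continuous on each $\iota(K_m)$ but not on $\bigcup_m\iota(K_m)$ (this is exactly where quasi-Polish $\neq$ Polish bites), $\eta_k\to\eta$ in $Y$ does not transfer to $\xi_k\to\xi$ in $(X,\tau)$ without all the variables landing in a single $K_m$. A concrete warning sign: in a separable Hilbert space with the weak topology, $k e_k\to 0$ coordinatewise (hence $\iota(k e_k)\to\iota(0)$ in $Y$) while $k e_k$ does not converge weakly, precisely because the sequence escapes every bounded (weakly compact) set.

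Jakubowski's proof avoids this by \emph{not} invoking the classical Skorokhod theorem as a black box. He constructs the representation directly on $\Omega=[0,1]$ with Lebesgue measure, arranging by hand that there is a fixed increasing family of sets $I_m\subset[0,1]$ with $|I_m|\geq 1-2^{-m}$ such that $\xi_k(\omega)\in K_m$ and $\xi(\omega)\in K_m$ for all $\omega\in I_m$, \emph{uniformly in $k$}. On each $I_m$ the variables live in a single compact metrizable $K_m$ (where $\iota$ is a homeomorphism), so $Y$-convergence implies $\tau$-convergence there, and $\bigcup_m I_m$ has full measure. The event $\{\xi_k\in K_m\}\supseteq I_m$ being the \emph{same} set for all $k$ is precisely the quantitative control your Borel--Cantelli step cannot deliver. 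If you want to keep your reduction, you must replace the black-box Skorokhod step by a version that comes with this uniformity built in (or reprove it by the explicit quantile-type construction on $[0,1]$ as Jakubowski does).
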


Next, we need to adapt the Gy\"{o}ngy-Krylov characterization of convergence in probability introduced in \cite{krylov} to the setting of quasi-Polish spaces. Recall that the original argument for the case of Polish spaces follows from the following simple observation made in \cite[Lemma 1.1]{krylov}.

\begin{lemma}\label{gyongy}
Let $X$ be a Polish space equipped with the Borel $\sigma$-algebra. A sequence of $X$-valued random variables $\{Y_n;\,n\in\mn\}$ converges in probability if and only if for every subsequence of joint laws, $\{\mu_{n_k,m_k};\,k\in\mn\}$, there exists a further subsequence which converges weakly to a probability measure $\mu$ such that
$$\mu\big((x,y)\in X\times X;\,x=y\big)=1.$$
\end{lemma}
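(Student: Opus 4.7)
The plan is to prove both implications. For the \emph{only if} direction, suppose $Y_n \to Y$ in probability for some $X$-valued random variable $Y$. Then the pair $(Y_n, Y_m)$ converges in probability (and hence in law) to $(Y, Y)$ as $n,m \to \infty$, so the full sequence of joint laws $\mu_{n,m}$ converges weakly to the law of $(Y, Y)$, a measure supported on the diagonal of $X \times X$; the condition on subsequences follows trivially.

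For the \emph{if} direction, I would show that $\{Y_n\}$ is Cauchy in probability, which in a Polish space suffices to conclude convergence in probability to some $X$-valued random variable $Y$ (by passing through a.s.-convergence of a subsequence and standard completeness arguments). Fix any metric $d$ compatible with the Polish topology on $X$. Suppose, for contradiction, that $\{Y_n\}$ is \emph{not} Cauchy in probability: then there exist $\varepsilon, \delta > 0$ and subsequences $n_k, m_k \to \infty$ such that
\[
\prst\bigl( d(Y_{n_k}, Y_{m_k}) > \varepsilon \bigr) > \delta \qquad \text{for all } k \in \mn.
\]
By hypothesis applied to the subsequence of joint laws $\{\mu_{n_k, m_k}\}$, there is a further subsequence (which I relabel) along which $\mu_{n_k, m_k}$ converges weakly to a probability measure $\mu$ on $X \times X$ with $\mu\{(x,y) : x = y\} = 1$.

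Now I apply the classical Skorokhod representation theorem on the Polish space $X \times X$: after possibly passing to a new probability space, there exist random variables $(\tilde Y_{n_k}, \tilde Y_{m_k})$ with $\mathrm{Law}(\tilde Y_{n_k}, \tilde Y_{m_k}) = \mu_{n_k, m_k}$ converging almost surely to some $(\tilde Y, \tilde Y')$ with law $\mu$. Because $\mu$ is concentrated on the diagonal, $\tilde Y = \tilde Y'$ almost surely, and therefore $d(\tilde Y_{n_k}, \tilde Y_{m_k}) \to 0$ almost surely, hence in probability. But equality of laws forces
\[
\prst\bigl( d(Y_{n_k}, Y_{m_k}) > \varepsilon \bigr) = \tilde{\prst}\bigl( d(\tilde Y_{n_k}, \tilde Y_{m_k}) > \varepsilon \bigr) \longrightarrow 0,
\]
contradicting the lower bound $\delta$. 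Hence $\{Y_n\}$ is Cauchy in probability and converges in probability to some limit $Y$, as required.

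The only delicate step is the switch to the Skorokhod copies and its compatibility with the probability bound; this is where Polishness is essential, both to invoke the Skorokhod theorem on $X \times X$ and to conclude that Cauchy in probability implies convergence in probability. Everything else is a routine contradiction argument on subsequences of subsequences.
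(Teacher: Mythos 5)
Your argument is correct, but note that the paper does not actually prove Lemma~\ref{gyongy}: it is stated as a recollection of \cite[Lemma~1.1]{krylov} and used without proof, so there is no ``paper's own proof'' to match against. Your proof is a sound self-contained version of the standard Gy\"ongy--Krylov argument. The only substantive difference from the classical route is in the ``if'' direction: where you invoke the Skorokhod representation theorem on $X\times X$ to upgrade weak convergence to almost sure convergence of copies and then deduce $\tilde\prst(d(\tilde Y_{n_k},\tilde Y_{m_k})>\varepsilon)\to 0$, the more elementary (and more common) route applies the portmanteau theorem directly to the \emph{closed} set $F_\varepsilon=\{(x,y)\in X\times X:\,d(x,y)\geq\varepsilon\}$, which satisfies $\mu(F_\varepsilon)=0$ because $\mu$ is concentrated on the diagonal; this gives $\limsup_k\mu_{n_k,m_k}(F_\varepsilon)\leq\mu(F_\varepsilon)=0$ and hence $\prst(d(Y_{n_k},Y_{m_k})>\varepsilon)\leq\mu_{n_k,m_k}(F_\varepsilon)\to 0$, yielding the same contradiction without a change of probability space. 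Both work; the portmanteau version avoids introducing new random variables and keeps the argument at the level of measures. Your reduction to Cauchy-in-probability and the use of completeness of the Polish metric to conclude convergence in probability are both standard and correctly deployed, and the ``only if'' direction is fine since $d\bigl((Y_n,Y_m),(Y,Y)\bigr)\leq d(Y_n,Y)+d(Y_m,Y)\to 0$ in probability forces convergence of all joint laws to the diagonal measure.
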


In view of our application in Subsection \ref{subsec:strong}, we are interested in the sufficiency of the above condition.

\begin{proposition}\label{diagonal}
Let $(X,\tau)$ be a quasi-Polish space. Let $\{Y_n;\,n\in\mn\}$ be a sequence of $X$-valued random variables. Assume that for every subsequence of their joint laws $\{\mu_{n_k,m_k};\,k\in\mn\}$ there exists a further subsequence which converges weakly to a probability measure $\mu$ such that
\begin{equation}\label{eq:diag}
\mu\big((x,y)\in X\times X;\,x=y\big)=1.
\end{equation}
Then there exists a subsequence $\{Y_{n_l};\,l\in\mn\}$ which converges a.s.

\begin{proof}
Let $\tilde f$ be the one-to-one and continuous mapping defined by
\begin{align*}
\tilde f:X&\rightarrow [-1,1]^{\mn}\\
x&\mapsto \{f_n(x);\,n\in\mn\},
\end{align*}
where $f_n$ were given by Definition \ref{def:quasip}. Since due to assumption
$$(Y_{n_k},Y_{m_k})\overset{d}{\rightarrow} (Y,Y) \quad\text{in}\quad X\times X$$
for every $(n_k),\,(m_k)$ and some $Y$ with the law $\mu$, we deduce from the continuous mapping theorem that
$$\big(\tilde f(Y_{n_k}),\tilde f(Y_{m_k})\big)\overset{d}{\rightarrow} \big(\tilde f(Y),\tilde f(Y)\big)\quad\text{in}\quad [-1,1]^\mn\times [-1,1]^\mn$$
for every $(n_k),\,(m_k)$. Since $[-1,1]^\mn\times [-1,1]^\mn$ is a Polish space, Lemma \ref{gyongy} applies to the sequence $\{\tilde f(Y_n);\,n\in\mn\}$ and the convergence in probability follows. Consequently, there exists a subsequence $\{\tilde f(Y_{n_l});\,n\in\mn\}$ which converges a.s. and it only remains to prove that $Y_{n_l}$ converges to $Y$ a.s. To this end, we proceed by contradiction: Assume that $Y_{n_l}$ does not converge a.s. to $Y$. Then there exists a set of positive probability $\Omega^*\subset\Omega$ such that for all $\omega\in\Omega^*$ there exists a neighborhood $\mathcal{N}(\omega)$ of $Y(\omega)$ and for every $l_0\in\mn$ there exists $l\geq l_0$ such that $Y_{n_l}(\omega)\notin\mathcal{N}(\omega).$ However, as the sequence $\{f_n;\,n\in\mn\}$ separates points of $X$, there exists $n\in\mn$ such that $f_n(Y_{n_l}(\omega))\neq f_n(Y(\omega))$ and as a consequence there exists a neighborhood $\mathcal{V}(\omega)$ of $\tilde f(Y(\omega))$ such that $\tilde f(Y_{n_l}(\omega))\notin \mathcal{V}(\omega).$ This contradicts the a.s. convergence of $\tilde f(Y_{n_l})$ and completes the proof.
\end{proof}
\end{proposition}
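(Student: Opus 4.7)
The plan is to reduce the quasi-Polish setting to a genuinely Polish one by using the separating family $\{f_n\}_{n \in \mathbb{N}}$ supplied by Definition \ref{def:quasip}. Concretely, I would define the map
\[
\tilde{f}: X \to [-1,1]^{\mathbb{N}}, \qquad x \mapsto (f_n(x))_{n \in \mathbb{N}},
\]
which is continuous and (because the $f_n$ separate points) injective. The target space $[-1,1]^{\mathbb{N}}$, equipped with the product topology, is a compact metric space, hence Polish. This is the space in which I plan to apply the classical Gyöngy--Krylov criterion recorded in Lemma \ref{gyongy}.

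First I would transport the hypothesis through $\tilde{f}$. Given any subsequence of joint laws $\mu_{n_k,m_k}$, the assumption provides a further subsequence converging weakly to some $\mu$ concentrated on the diagonal of $X \times X$. Since $\tilde f \times \tilde f$ is continuous, the continuous mapping theorem yields that the joint laws of $(\tilde f(Y_{n_k}), \tilde f(Y_{m_k}))$ converge weakly to the pushforward of $\mu$ under $\tilde f \times \tilde f$, which by \eqref{eq:diag} is concentrated on the diagonal of $[-1,1]^{\mathbb{N}} \times [-1,1]^{\mathbb{N}}$. Lemma \ref{gyongy}, valid in the Polish space $[-1,1]^{\mathbb{N}}$, then gives that $\tilde f(Y_n)$ converges in probability; extracting a further subsequence yields $\tilde f(Y_{n_l}) \to Z$ almost surely for some $[-1,1]^{\mathbb{N}}$-valued random variable $Z$.

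The main obstacle, and the step that really uses the quasi-Polish structure, is to lift this almost sure convergence from $[-1,1]^{\mathbb{N}}$ back to $X$. Since $\tilde f$ need not be a topological embedding, coordinatewise convergence of $\tilde f(Y_{n_l})$ does not automatically give convergence of $Y_{n_l}$ in $\tau$. My plan here is a contradiction argument: if $Y_{n_l}$ did not converge in $X$ almost surely (to some candidate limit $Y$ identified from the weak limit of the original sequence), then on a set of positive probability one could find, for each $\omega$, a $\tau$-open neighborhood $\mathcal{N}(\omega)$ of $Y(\omega)$ that is avoided infinitely often by $Y_{n_l}(\omega)$. Using the separation property of $\{f_n\}$, I would produce for each such $\omega$ an index $n = n(\omega)$ with $f_n(Y_{n_l}(\omega)) \not\to f_n(Y(\omega))$, thereby producing a neighborhood of $\tilde f(Y(\omega))$ in $[-1,1]^{\mathbb{N}}$ that is avoided by $\tilde f(Y_{n_l}(\omega))$ infinitely often. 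This contradicts the a.s.\ convergence of $\tilde f(Y_{n_l})$ established in the previous step, and so the desired a.s.\ convergence of the subsequence $Y_{n_l}$ follows.
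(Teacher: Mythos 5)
Your proposal is correct and follows essentially the same route as the paper: pushing forward through the injective continuous map $\tilde f$ into the Polish space $[-1,1]^{\mathbb N}$, invoking Lemma \ref{gyongy} there, and then lifting a.s.\ convergence back to $X$ by the same contradiction argument based on the point-separating property of $\{f_n\}$. There is nothing to add.
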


\bigskip

\centerline{\bf Acknowledgement}
\noindent{D.B. was partially supported by Edinburgh Mathematical Society.}\\
{The research of E.F. leading to these results has received funding from the European Research Council under the European Union's Seventh Framework Programme (FP7/2007-2013)/ ERC Grant Agreement 320078. The Institute of Mathematics of the Academy of Sciences of the Czech
Republic is supported by RVO:67985840.}\\
The authors thank the referee for his/her very valuable suggestions.

\end{document}